\documentclass[11pt,leqno]{amsart}

\usepackage{latexsym}
\usepackage{amssymb}
\usepackage{amsmath}
\usepackage{amsthm}
\usepackage{verbatim}
\usepackage{pdfsync}
\usepackage[margin=1.2in]{geometry}

\usepackage{color}

\usepackage{tikz}
\usepackage[all]{xy}

\begin{document}

%%%%%%%%%%%%%%% This is the macros file for alll chapters of KYR
%%%%% last edited:  ssk  9/26/04

%%%%%%%%%%%%%%%  macros

%%%% mathbb

\newcommand{\A}{{\mathbb A}}
\newcommand{\C}{{\mathbb C}}
\newcommand{\F}{{\mathbb F}}
\newcommand{\G}{{\mathbb G}}
\newcommand{\R}{{\mathbb R}}
\newcommand{\Q}{{\mathbb Q}}
\newcommand{\X}{{\mathbb X}}
\newcommand{\Z}{{\mathbb Z}}
\newcommand{\HZ}{\widehat{\Z}}

%%%% mathrm

\newcommand{\rom}[1]{\text{\rm #1}}
\renewcommand{\roman}{\rm}

\newcommand{\Aut}{\text{\rm Aut}}
\newcommand{\CH}{\widehat{\text{\rm CH}}}
\newcommand{\cha}{{\text{\rm char}}}
\newcommand{\CHe}{\text{\rm CHeeg}}
\newcommand{\degh}{\widehat{\text{\rm deg}}}
\newcommand{\degH}{\widehat{\text{\rm deg}}}    %%% redundant def
\newcommand{\diag}{{\text{\rm diag}}}
\newcommand{\Diff}{\text{\rm Diff}}
\newcommand{\disc}{\text{\rm discr}}
\renewcommand{\div}{\text{\rm div}}
\newcommand{\divh}{\widehat{\text{\rm div}}}
\newcommand{\DS}{\text{\rm DS}}
\newcommand{\Ei}{\text{\rm Ei}}
\newcommand{\End}{\text{\rm End}}
\newcommand{\ev}{{\text{\rm ev}}}
\newcommand{\Gal}{\text{\rm Gal}}
\newcommand{\GL}{\text{\rm GL}}
\newcommand{\GSpin}{\text{\rm GSpin}}
\newcommand{\Hom}{\text{\rm Hom}}
\newcommand{\hor}{{\text{\rm horiz}}}
\newcommand{\id}{\text{\rm id}}
\newcommand{\im}{\text{\rm im}}
\renewcommand{\Im}{\text{\rm Im}}
\newcommand{\inv}{{\text{\rm inv}}}
\newcommand{\Jac}{\text{\rm Jac}}
\newcommand{\Leray}{{\mathrm L}}
\newcommand{\Lie}{\text{\rm Lie}}
\newcommand{\Mp}{\text{\rm Mp}}
\newcommand{\mult}{\text{\rm mult}}
\newcommand{\MW}{\text{\rm MW}}
\newcommand{\MWt}{\widetilde{\MW}}
\newcommand{\new}{\text{\rm new}}
\newcommand{\Nm}{\text{\rm Nm}}
\newcommand{\ord}{\text{\rm ord}}
\newcommand{\PGL}{\text{\rm PGL}}
\newcommand{\Pic}{\text{\rm Pic}}
\newcommand{\Pich}{\widehat{\text{\rm Pic}}}
\newcommand{\pr}{\text{\rm pr}}
\newcommand{\ra}{\text{\rm ra}}
\newcommand{\Rao}{\mathrm R}
\renewcommand{\Re}{\text{\rm Re}}
\newcommand{\sgn}{\text{\rm sgn}}
\newcommand{\sig}{\text{\rm sig}}
\newcommand{\SL}{\text{\rm SL}}
\newcommand{\SO}{\text{\rm SO}}
\newcommand{\Sp}{\text{\rm Sp}}
\newcommand{\Spec}{\text{\rm Spec}\, }
\newcommand{\Spf}{\text{\rm Spf}}
\newcommand{\supp}{\text{\rm supp}}
\newcommand{\Sym}{{\text{\rm Sym}}}
\newcommand{\tr}{\text{\rm tr}}
\newcommand{\type}{\text{\rm type}}
\newcommand{\Ver}{\text{\rm Vert}}
\newcommand{\vol}{\text{\rm vol}}
\newcommand{\Wald}{\text{\rm Wald}}

%%%% cals

\newcommand{\Cal}{\mathcal}     %%% this makes the old \Cal valid

\newcommand{\AHH}{\hat{\Cal A}}   % used??
\newcommand{\CHH}{\hat{\Cal C}}
\newcommand{\MM}{\Cal D}          % redefined!!!
\newcommand{\MMb}{\MM^\bullet}
\newcommand{\ssplit}{\text{\bf split}}
\newcommand{\whcc}{\widehat{\Cal C}}
\newcommand{\CO}{\mathcal O}
\newcommand{\COH}{\widehat{\CO}}
\newcommand{\M}{\Cal M}
\newcommand{\OB}{\Cal O_B}
\newcommand{\XX}{\mathcal X}
\newcommand{\bXX}{\bar\XX}
\newcommand{\wc}{\hat{\Cal C}}
\newcommand{\wch}{\wc^{\text{\rm hor}}}
\newcommand{\ZZ}{\Cal Z}
\newcommand{\ZH}{\widehat{\Cal Z}}   %%% redundant def's
\newcommand{\Zh}{\widehat{\Cal Z}}
\newcommand{\ZZh}{\ZZ^{\text{\rm hor}}}
\newcommand{\ZZv}{\ZZ^{\text{\rm ver}}}
\newcommand{\ZZhh}{\Zh^{\text{\rm hor}}}
\newcommand{\ZZhv}{\Zh^{\text{\rm ver}}}

%%%% math spacing

\newcommand{\nass}{\noalign{\smallskip}}
\newcommand{\snass}{\noalign{\vskip 2pt}}
\newcommand{\tent}[1]{ \vphantom{\vbox to #1pt{}} }   %%% !!!!

%%%% math fonts

\newcommand{\scr}{\scriptstyle}
\newcommand{\disp}{\displaystyle}

\font\cute=cmitt10 at 12pt
\font\smallcute=cmitt10 at 9pt
\newcommand{\kay}{{\text{\cute k}}}
\newcommand{\smallkay}{{\text{\smallcute k}}}

\renewcommand{\a}{\alpha}
\renewcommand{\b}{\beta}
\newcommand{\e}{\epsilon}
\renewcommand{\l}{\lambda}
\renewcommand{\L}{\Lambda}
\renewcommand{\o}{\omega}
\renewcommand{\O}{\Omega}
\renewcommand{\P}{\Phi}
\newcommand{\ph}{\varphi}
\newcommand{\phih}{\widehat{\phi}}
\newcommand{\wphi}{\widehat{\phi}}
\newcommand{\phit}{\widetilde{\phi}}
\newcommand{\s}{\sigma}
\newcommand{\vth}{\vartheta}

%%%% from Chapter VIII

%%\newcommand{\Gt}{\widetilde{G}}    %%%%  tilde's removed 6/20/04
%\newcommand{\Gt}{G}
%\newcommand{\Ph}{\Phi}
%\newcommand{\pht}{\widetilde{\phi}}
%%\newcommand{\Pht}{\widetilde{\Phi}}%%%%  tilde's removed 6/20/04
%\newcommand{\Pht}{\Phi}
%%\newcommand{\Pt}{\widetilde{P}}
%\newcommand{\Pt}{P}                 %%%%  tilde's removed 6/20/04
%%\newcommand{\Kt}{\widetilde{K}}    %%%%  tilde's removed 6/20/04
%\newcommand{\Kt}{K}
%%\newcommand{\It}{\widetilde{I}}    %%%%  tilde's removed 6/20/04
%\newcommand{\It}{I}
%\newcommand{\Jt}{\widetilde{J}}
%\newcommand{\lt}{\widetilde{\l}}
%\newcommand{\vp}{\varpi}
%

\newcommand{\Pt}{P}
\newcommand{\Ph}{\P}
\newcommand{\Pht}{\tilde \P}   %%%%%%%%    ****** conflicts *******
\newcommand{\Kt}{K}           %%%%  tilde's removed 6/20/04
\newcommand{\Mt}{M}
%%%%%%

%\newcommand{\Ph}{\Phi}      %%%%%%%%    ****** conflicts *******   temp %'ed
\newcommand{\pht}{\widetilde{\phi}}
\newcommand{\It}{I}
\newcommand{\Jt}{\widetilde{J}}
\newcommand{\lt}{\widetilde{\l}}
\newcommand{\vp}{\varpi}

\newcommand{\bom}{{\boldsymbol{\o}}}
\newcommand{\hbom}{\widehat{\bom}}
\newcommand{\ff}{{\bold f}}
\newcommand{\fsp}{\boldsymbol{f}_{\!\rm sp}}
\newcommand{\fev}{\boldsymbol{f}_{\!\rm ev}}
\newcommand{\fb}{\boldsymbol{f}}
\newcommand{\J}{\und{J}'}
\newcommand{\JJ}{\bold J'}
\newcommand{\V}{\bold V}
\newcommand{\xx}{\bold x}

\newcommand{\g}{{\mathfrak g}}
\renewcommand{\H}{\mathfrak H}

%%%%  math macros

\newcommand{\back}{\backslash}
\newcommand{\CT}[1]{\operatornamewithlimits{CT}_{#1}}
\renewcommand{\d}{\partial}
\newcommand{\db}{\bar\partial}
\newcommand{\dbar}{\bar{\partial}}
\newcommand{\gs}[2]{\langle \,#1,#2\,\rangle}
\newcommand{\Gt}{G}
\newcommand{\hfal}{h_{\text{\rm Fal}}}
\newcommand{\II}{\int^\bullet}
\newcommand{\isoarrow}{\ {\overset{\sim}{\longrightarrow}}\ }
\newcommand{\lisoarrow}{\ {\overset{\sim}{\longleftarrow}}\ }
\newcommand{\limdir}[1]{\underset{\underset{#1}{\rightarrow}}{\lim}}
\newcommand{\lan}{\operatorname{\langle}\hskip .5pt}
\newcommand{\ran}{\,\operatorname{\rangle}}
\newcommand{\lra}{\longrightarrow}
\newcommand{\doublelra}{\ {\overset{\scr\lra}{\scr\lra}}\ }
\newcommand{\nat}{\natural}
\newcommand{\notmid}{\mkern-5mu\not\mkern5mu\mid}
\newcommand{\Optoc}{\text{\rm Opt}(O_{c^2d},O_B)}
\newcommand{\psim}{\psi^{-}}
\newcommand{\qeq}{\ \overset{??}{=}\ }
\newcommand{\sh}{\sharp}
\newcommand{\thCH}{\theta^{\text{\rm ar}}}
\newcommand{\wht}{\widehat{\theta}}     %%% a replacement
\newcommand{\triv}{1\!\!1}
\renewcommand{\tt}{\otimes}
\newcommand{\und}[1]{\underline{#1}}
\newcommand{\z}{z}  %%% symbol used for the central sign 

\newcommand{\thMW}{\theta^{\text{\rm ar}}}
\newcommand{\tph}{\widetilde{\widehat\phi_1}}
\newcommand{\Pet}{\text{\rm Pet}}

%%%%% from Chapter IX

%%\newcommand{\Gt}{\widetilde{G}}    %%%%  tilde's removed 6/20/04
%\newcommand{\Gt}{G}
%\newcommand{\Ph}{\Phi}
%\newcommand{\pht}{\widetilde{\phi}}
%%\newcommand{\Pht}{\widetilde{\Phi}}%%%%  tilde's removed 6/20/04
%\newcommand{\Pht}{\Phi}
%%\newcommand{\Pt}{\widetilde{P}}
%\newcommand{\Pt}{P}                 %%%%  tilde's removed 6/20/04
%%\newcommand{\Kt}{\widetilde{K}}    %%%%  tilde's removed 6/20/04
%\newcommand{\Kt}{K}
%%\newcommand{\It}{\widetilde{I}}    %%%%  tilde's removed 6/20/04
%\newcommand{\It}{I}
%\newcommand{\Jt}{\widetilde{J}}
%\newcommand{\lt}{\widetilde{\l}}
%\newcommand{\vp}{\varpi}

%%%%%%%%%%%

%%%%  hacking

\newcommand{\thing}{ \raisebox{-6.4pt}{$\tilde{\tilde{}}$}  }   %%% some hacking from 4/14/04
\newcommand{\smallthing}{ \raisebox{-4.4pt}{$\scr\tilde{\tilde{}}$}  }
\newcommand{\ttilde}[1]{\overset{\smash{\thing}}{#1}}
\newcommand{\smallttilde}[1]{\overset{\smash{\smallthing}}{#1}}
\newcommand{\downhookarrow}{\hbox{$\downarrow\hskip -6.1pt\raisebox{6pt}{$\cap$}$}}

%%%% general formating

%\newcommand{\bysame}{\makebox[1.2cm][s]{\hrulefill ,\ }}   %%%%  \bysame is defined already
%\newcommand{\bysame}{$\underline{\text{\hbox to.5in{}}}$}   %%% was used in Textures
\providecommand{\bysame}{\makebox[3em]{\hrulefill}\thinspace}   %%% a fix:  cf. p 322 of Graetzer
\newcommand{\hfb}{\hfill\break}
\newcommand{\margincom}[1]{\marginpar{\bf\raggedright #1}}
\newcommand{\Sec}{\S}

%%%%%%%%%%%%%%

\numberwithin{equation}{section}
\setcounter{section}{0}
\setcounter{MaxMatrixCols}{15}

%%%%%%%%%%%%%%

\newtheorem{theo}{Theorem}[section]
\newtheorem{lem}[theo]{Lemma}
\newtheorem{prop}[theo]{Proposition}
\newtheorem{cor}[theo]{Corollary}
\newtheorem*{atheo}{Theorem A}
\newtheorem{conj}[theo]{Conjecture}
\newtheorem{rem}[theo]{Remark}      %%% seems not to exist in compositio.cls ???
\newtheorem{defn}[theo]{Definition}

\newcommand{\OK}{O_{\smallkay}}
\newcommand{\DI}{\mathcal D^{-1}}

\newcommand{\pre}{\text{\rm pre}}

\newcommand{\Bor}{\text{\rm Bor}}
\newcommand{\Rel}{\text{\rm Rel}}
\newcommand{\rel}{\text{\rm rel}}

\parindent=0pt
\parskip=10pt
\baselineskip=14pt

\newcommand{\cutter}{\medskip\medskip \hrule \medskip\medskip}

\newcommand{\bpm}{\begin{pmatrix}}
\newcommand{\epm}{\end{pmatrix}}
\newcommand{\be}{\begin{equation}}
\newcommand{\ee}{\end{equation}}
\newcommand{\dop}{\,{\bold\cdot}\,}

\newcommand{\crit}{\text{\rm crit}}

\newcommand{ \bgs }[2]{\gs{#1}{#2}}

\newcommand{\hZ}{\widehat{\Z}}
\newcommand{\Sing}{\text{\rm Sing}}
\newcommand{\T}{B}

\newcommand{\tL}{\tilde L}
\newcommand{\tV}{\tilde V}
\newcommand{\tx}{\tilde x}

\newcommand{\Th}{\text{\rm Th}}

\newcommand{\now}{\count0=\time 
\divide\count0 by 60
\count1=\count0
\multiply\count1 by 60
\count2= \time
\advance\count2 by -\count1
\the\count0:\the\count2}

\title{Another product for a Borcherds form}

\author{Stephen Kudla}

\centerline{\it\hfill\today:\ \now}

\maketitle

In a celebrated pair of papers \cite{borch95} and \cite{borch98}, Borcherds constructed meromorphic modular forms on the locally symmetric 
varieties associated to rational quadratic spaces $V$ of signature\footnote{Equivalently, Borcherds usually works with signature $(2,n)$.} $(n,2)$. 
More precisely, for an even lattice $M$ with respect to the symmetric bilinear form $(\ , \ )$,  there is a finite Weil representation $\rho_M$ 
of an extension $\Gamma'$ of $\SL_2(\Z)$ on 
the group algebra $S_M=\C[M^\vee/M]$, where $M^\vee$ is the dual lattice of $M$. A weakly holomorphic modular form $F$ 
of weight $1-\frac{n}2$ and type $\rho_M$ is an $S_M$-valued holomorphic function of $\tau\in \H$, the upper half-plane, with transformation 
law
$$F(\gamma'(\tau)) = (c\tau+d)^{1-\frac{n}2} \,\rho_M(\gamma')F(\tau),$$
for $\gamma'\in \Gamma'$, and with Fourier expansion of the form 
$$F(\tau) = \sum_{m} c(m)\, q^m, \qquad c(m) \in S_M,$$
where $m\in \Q$ and where there are only a finite number of nonvanishing  terms with $m<0$, i.e., $F$ is meromorphic at the cusp. 
Let $D$ be one component of the space of oriented negative $2$-planes in $V(\R)= M\tt_\Z \R$. 
Assuming that the $c(m)$ for $m\le 0$ lie in $\Z[M^\vee/M]$, Borcherds constructs a meromorphic modular form $\Psi(F)$ on $D$
of weight $\frac12\,c(0)(0)$ with respect to an arithmetic group $\Gamma_M$ in $\Aut(M)$. 
The divisor of $\Psi(F)$ is given explicitly in terms of the $c(m)$'s for $m<0$ and, most remarkably, in a suitable neighborhood of 
any point boundary component,  $\Psi(F)$ is given by an explicit infinite 
product. 

In the present paper, assuming that the rational quadratic 
space $V= M\tt_\Z\Q$ contains isotropic $2$-planes, 
we give another family of product formulas for $\Psi(F)$, each valid in a neighborhood of the $1$-dimensional 
boundary component associated to such a $2$-plane $U$.  

In the simplest case, suppose that $M = L$ is an even unimodular lattice of signature $(n,2)$ and that there is a Witt decomposition
\be\label{eq1} V = U + V_0 + U' \ee
of $V$ such that\footnote{In a common terminology, `$L$ splits two unimodular hyperbolic planes.'}
$$L = L_U + L_0 + L_{U'},$$
where  
$L_{U} = L\cap U$, $L_{U'}= L\cap U'$, and $L_0 = L\cap V_0$.   Note that $L_0$ is even unimodular and positive definite. 
In this case,  $S_L  = \C\,\ph_0$ is one dimensional, with basis vector $\ph_0$ associated to the zero element of 
$L^\vee/L$, %where $\ph_0$ is the characteristic function of $M\tt_\Z \widehat{\Z} \subset V(\A_f)$, 
and we can write the input form $F = F_o\,\ph_0$  where $F_o$ is scalar valued. Write $c(m) = c_o(m)\,\ph_0$. 
Also associated to the decomposition (\ref{eq1}) and a choice of basis $e_1$ and $e_2$ for $M_U$ and dual basis $e_1'$ and $e_2'$ for $U'$, 
is a realization of $D$ as a Siegel domain of the third kind:
$$D \simeq \{(\tau_1, \tau_2', w_0) \in \H \times \C \times V_0(\C)\mid 4 v_1 v_2' + Q(w_0-\bar w_0)>0\},$$
where $v_1= \Im(\tau_1)$, $v_2'= \Im(\tau_2')$ and $Q(x) = \frac12 (x,x)$. We write $q_1= e(\tau_1)$ and $q_2= e(\tau_2')$, where $e(t) = e^{2\pi i t}$.  
 In these coordinates, our product formula has the following form.   

\begin{atheo}
In a suitable neighborhood of the $1$-dimensional boundary 
component associated to $U$, the associated Borcherds form $\Psi(F)$ is the product of the factors 
\be\label{example1-first}
\prod_{\substack{a\in \Z\\ \snass a>0}} \prod_{b\in\Z}
\prod_{\substack{x_0\in L_0}} 
\big(1-q_2^a \,q_1^b\,e( - (x_0, w_0))\big)^{c_o(ab-Q(x_0))}
\ee
and 
\be\label{example1-second-II}
\kappa\,q_2^{I_0}\,\eta(\tau_1)^{c_o(0)}\,
 \prod_{\substack{x_0\in L_0\\ \snass x_0\ne0}} \bigg(\, \frac{\vartheta_1(-(x_0,w_0),\tau_1)}{\eta(\tau_1)}\,\bigg)^{c_o(-Q(x_0))/2}
\ee
where $\kappa$ is a scalar of absolute value $1$ and 
$$I_0 = -  \sum_{m} 
\sum_{x_0\in L_0} c_o(-m)\,\s_1(m-Q(x_0)).$$
\end{atheo}

Here $\eta(\tau)$ is the Dedekind eta-function,  $\vartheta_1(\tau,z)$ is the Jacobi theta function (\ref{jacobi-I}), and $\s_1$ is the usual divisor function extended 
by the conventions $\s_1(r)=0$ 
if $r\notin \Z_{\ge0}$ and $\s_1(0) = -\frac1{24}$.  
Note that in the product (\ref{example1-second-II}),  $x_0$ runs over a finite set. 
The result in the general case, i.e., for any $M$ and $U$, has a similar shape, cf.  Theorem~\ref{mainthm} and Corollary~\ref{gen-cor}  in section 2.
Note that the scalar $\kappa$ arises due to the fact that $\Psi(F)$ is only defined up to such a factor. 
Of course, if there are several inequivalent isotropic planes, it remains to determine how these factors vary. 

Our proof of the product formula is a variant of that of Borcherds \cite{borch98}. There he computes the 
regularized theta lift of $F$ in the tube domain coordinates associated to the maximal parabolic subgroup 
stabilizing an isotropic line. He observes that, in a suitable neighborhood of the cusp and up to terms ultimately 
arising from a Petersson norm, the regularized 
theta integral is the $\log|\cdot |^2$ of a holomorphic function on that neighborhood. Since, 
up to an explicit singularity along some special divisors, the regularized integral 
is globally defined and automorphic, Borcherds is able to conclude the existence of the meromorphic 
modular form $\Psi(F)$ with the given product expansions.  

Analogously, we compute the regularized theta lift in the (Siegel domain of the third kind) coordinates associated to the maximal parabolic 
subgroup stabilizing an isotropic $2$-plane $U$.  Again in a suitable neighborhood of the $1$-dimensional 
boundary component associated to $U$, we find that the regularized lift is the $\log|\cdot |^2$ 
of a meromorphic function with a product formula, as described in a special case in Theorem A. 
One main difference between our product and that of Borcherds is that our expression includes the finite product (\ref{example1-second-II}), defined on all of $D$,  of functions having zeros and poles 
in our neighborhood. In effect, this factor accounts for some of the singularities which limit the convergence of the classical Borcherds product
and require the introduction of Weyl chambers in the negative cone in its description. 
With these singularities absorbed in  (\ref{example1-second-II}), our product is valid in a much simpler region depending only 
on the Witt decomposition (\ref{eq1}) and the choice of a basis $e_1$, $e_2$ for $M_U$. 

The difference between the two products may be viewed as a reflection of the geometry.  Suppose that $\Gamma\subset \Aut(M)$ is a 
neat subgroup of finite index. Then in a smooth toroidal compactification $\widetilde{X}$ of $X = \Gamma\back D$,  
the inverse image of a $1$-dimensional boundary component in the Bailey-Borel compactification $X^{BB}$ 
is a Kuga-Sato variety over a modular curve. This component of the compactifying divisor arises from the fact that $\Gamma_U\back D$, where 
$\Gamma_U$ is the stabilizer of $U$ in $\Gamma$, 
can be viewed as a line bundle, minus its zero section, on such a Kuga-Sato variety. A compactifying chart is obtained by filling in the zero section. 
In our coordinates, the boundary component in $X^{BB}$ is the modular curve $\bar\Gamma_U\back \H$, where $\bar\Gamma_U$ is the 
subgroup of $\SL(U)$ obtained by restricting elements of $\Gamma_U$ to $U$ and $\tau_1\in \H$. The coordinate $w_0$ is the fiber coordinate
of the Kuga-Sato variety and $q_2 = e(\tau_2')$ is the fiber coordinate for the line bundle over it.   In particular, 
the product formula of Theorem A shows that $\Psi(F)$ extends to this compactifying chart provided $q_2^{I_0}$ does 
(this will depend on the intersection of $\Gamma$ with the center of $P_U$), and the order of vanishing of the extension 
along the compactifying divisor can be read off.  Since the factor (\ref{example1-first}) goes to $1$ as $q_2$ goes to zero, the (regularized) value 
of $\Psi(F)$ on the compactifying divisor is given by
\be\label{first-FJ}
\Psi_0(\tau_1,w_0) = \lim_{q_2\rightarrow 0} q_2^{-I_0}\,\Psi(F) = \kappa\,\eta(\tau_1)^{c_o(0)}\,
 \prod_{\substack{x_0\in L_0\\ \snass x_0\ne0}} \bigg(\, \frac{\vartheta_1(-(x_0,w_0),\tau_1)}{\eta(\tau_1)}\,\bigg)^{c_o(-Q(x_0))/2}.
 \ee
In contrast, the description of the inverse image in $\widetilde{X}$ of a point boundary component in $X^{BB}$ involves the machinery of torus embeddings, 
in particular the choice of a system of rational 
polyhedral cones in the negative light cone associated to an isotropic line, \cite{looijenga.IV}. The classical Borcherds products, 
which depend on the choice of a Weyl chamber, should give a description of $\Psi(F)$ in the various associated coordinate charts. 
The combinatorics in this situation are considerably more complicated than those required for the $1$-dimensional boundary components.
It is also worth noting that Bruinier and Freitag \cite{bruinier.freitag}   investigated the behavior of Borcherds products locally in a neighborhood of a generic point of 
a rational  $1$-dimensional boundary component and that the factor (\ref{example1-second-II}) in Theorem~A is closely related to what they call a local 
Borcherds product, cf.~section~\ref{LBP}  below. 

Product formulas like that of Theorem A already occur in Borcherds \cite{borch95} and in work of 
Gritsenko \cite{gritsenko.Jacobi-n}. Indeed, in Borcherds original approach and in the construction of \cite{gritsenko.Jacobi-n},  the input data is a suitable Jacobi form and  
the associated modular form for an arithmetic subgroup $\Gamma$ in $O(n,2)$ is constructed by applying an infinite sum of Hecke 
operators to it, cf.~the discussion on pp.191--2 of \cite{borch95}, 
especially the third displayed equation on p.192. This method requires information about the generators for $\Gamma$ and the theory of Jacobi forms.
The method of regularized theta integrals developed by Borcherds in his subsequent paper \cite{borch98},  stimulated by ideas of 
Harvey and Moore \cite{harvey.moore},  takes a vector valued form $F$ as discussed above as input and works greater generality. 
In particular,  the modularity 
of the output ultimately follows from the transformation properties of the theta kernel involved. 

Our product formula can be viewed as providing an analogue of the expressions arising in \cite{borch95} and \cite{gritsenko.Jacobi-n} in the general case.
In the case of a unimodular lattice $L$ as in Theorem~A,  we have
\be\label{grit.prod}
\Psi(F)(w) = q_2^{I_0}\,\Psi_0(\tau_1,w_0)\, \exp\bigg( -\sum_{n=1}^\infty \frac{1}{n}  
\sum_{a=1}^{\infty} q_2^{an}\,\Theta_{a,n}(F)(\tau_1,w_0)\ \bigg).
\ee
where
\be
\Theta_{a,n}(F)(\tau_1,w_0) = \sum_{m} c_o(m)\,q_1^{a^{-1}mn}\ \sum_{\substack{x_0\in L_0\\ \snass a\mid (Q(x_0)+m)}} 
q_1^{a^{-1}n Q(x_0)}\,e(-(x_0,w_0)).\ee
Note that one obtains the Fourier-Jacobi expansion of $\Psi(F)$ by expanding the exponential function; for example, the next such coefficient is 
$\Psi_0\cdot \Theta_{1,1}(F)$. 
The analogue of (\ref{grit.prod}) for any Borcherds lift $\Psi(F)$ is given in Corollary~\ref{GNformula} which thus shows that 
every Borcherds lift has such a product.

As already explained, our construction is based on the method of regularized theta integrals and makes no use of  the theory of 
Jacobi forms or of generators for $\Gamma$. It is amusing to note that the eta-function and Jacobi theta function 
come into our formula due to the first and second Kroecker limit formulas which turn up in our calculation precisely in the form
discussed in \cite{siegel}.  The infinite sum of Hecke operators occurring in \cite{borch95} and \cite{gritsenko.Jacobi-n} is implicit 
in our computation as well, for example in the non-singular orbits in (\ref{etareps}), but we have not tried to include this in our formulation.

We now discuss the contents of various section.  Section 1 sets up the notation, in particular the realization of $D$ as a Siegel 
domain of the third kind determined by a Witt decomposition (\ref{eq1}) for an isotropic $2$-plane $U$.  We also explain a convenient 
choice of a sublattice $L\subset M$ compatible with (\ref{eq1}).  The main calculations are then done for $S_L$-valued forms $F$.
In section 2, we review the regularized theta integral construction of the Borcherds form $\Psi(F)$ and state the first form of our product formula (Theorem~\ref{mainthm}). 
Then we give a more intrinsic description of the index sets which yields a formula for general lattices 
$M$. The final formula depends only on $M$, the choice of Witt decomposition (\ref{eq1}), and the choice of a basis $e_1$ and $e_2$ 
for $M\cap U$.  In section 3, which is the technical core of the paper, 
we compute the regularized theta integral.  The key point is to express the theta kernel in terms of a mixed model 
for the Weil representation determined by the Witt decomposition (\ref{eq1}).  From a classical point of view, this amounts to taking 
a certain partial Fourier transform of theta kernel.  Precisely the same trick is an essential part of Borcherds' calculation in section~7 of \cite{borch98}, 
where the relevant Witt decomposition involves an isotropic line.  In the mixed model, the theta integral has an orbit decomposition 
(\ref{theta.decompo-I}) which allows a further unfolding argument.  There are non-singular terms, terms of rank 1, and the zero orbit, 
and these eventually give rise to the various factors in Theorem~\ref{mainthm}.  The calculation for the rank $1$ orbits is very pleasant, 
as it leads almost immediately to precisely the expressions evaluated by means of the first and second Kronecker limit formulas in 
Siegel \cite{siegel}. The contribution of the zero orbit is already essentially determined by Borcherds. 
It is worth noting that in most of our calculation, we use the coordinates on $D$ that come from the action of the real points of the 
unipotent radical of the maximal parabolic $P_U$, whereas the natural complex coordinates involve a shift (\ref{deftau2prime}). 
To get our final product formula expressed in these holomorphic coordinates, we need to combine the contribution of the zero orbit with some of the 
factors occurring in the Kronecker limit formula terms, cf.~(\ref{vanishing}) and (\ref{extra-junk}). That this is possible depends 
essentially on the identity of Proposition~\ref{borch-quad-rel-II} (Borcherds' quadratic identity), which seems to lie at the 
heart of the theory of Borcherds forms, cf.~the comments on p.536 of \cite{borch98} and 
Lemma~2.2 of \cite{GK-II}, for example.  In section 4, we check that our formula yields several examples from the literature. 
For more recent work using the Jacobi form method 
cf. Cl\'ery-Gritsenko \cite{clery.gritsenko} and the references given there. In section 5, we explain how to pass from our product formula to 
one of those given by Borcherds {\it for a particular choice of Weyl chamber}.  In this case, the Weyl vector in the Borcherds product arises 
in a natural way from the factors in our formula. The Borcherds products for other Weyl chambers do not seem to be accessible in this way.

This paper is the outcome of a question raised in discussions with Jan Bruinier, Ben Howard, Michael Rapoport, and Tonghai Yang in Bonn 
in June of 2013.  I would like to thank them for their interest and encouragement.

\section{Complex coordinates and lattices}

\subsection{The Siegel domain of the third kind}\label{siegel-3rd}
Let $V$ be a rational quadratic space of signature $(n,2)$ and fix a Witt decomposition (\ref{eq1}),
with $\dim U=2$.  Choose a basis $e_1$, $e_2$ for $U$ and dual basis $e_1'$, $e_2'$ for $U'$, 
and write $x = x_0+ x_{11}e_1' + x_{12} e_2' + x_{21} e_1 + x_{22} e_2$ 
as 
\be\label{x-coord} 
x = \bpm x_2\\ x_0\\ x_1\epm \in V,
\ee
with $x_1$, $x_2\in \Q^2$ (column vectors) and $x_0\in V_0$. Then 
$$(x,x) = (x_0,x_0) + 2\, x_1\dop x_2,$$
where the second term is the dot product. The unipotent radical of the parabolic subgroup $P_U$ of $G= O(V)$ stabilizing $U$ is 
$$n(b,c) = \bpm 1_2& -b^* & cJ- Q(b)\\ {}& 1_{V_0}& b\\ {}&{}&1_2\epm,$$ 
where $b = [b_1,b_2] \in V_0^2$, $c\in \R$, and 
$$J=\bpm {}&1\\ -1&{}\epm.$$    
Here $b^*$ is the element of $\Hom(V_0,\Q^2)$ defined by 
$$ b^*(v_0) = \bpm (b_1,v_0)\\ (b_2,v_0)\epm,$$
and $Q(b) = \frac12 ((b_i,b_j))$.  In particular, 
\be\label{eq2}
n(b,c) x = \bpm x_2 - (b,x_0) + (cJ-Q(b)) x_1\\ x_0 + b x_1 \\ x_1 \epm. \ee
The Levi factor of $P_U$ determined by (\ref{eq1}) is
$$M_U \simeq GL(U) \times O(V_0).$$
Here, for example, if $\a\in \GL_2$ and $h\in O(V_0)$, we have 
$$m_U(\a,h) x = \bpm \a x_2 \\ h\, x_0 \\ {}^t\a^{-1} x_1\epm.$$

We review the realization of  the space of oriented negative $2$-planes in $V(\R)$,  as a Siegel domain 
of the third kind associated to the Witt decomposition (\ref{eq1}).  For a more elegant treatment, cf. \cite{looijenga.IV}. 
First recall that for an oriented negative $2$-plane $z$, we can view the orientation as a complex structure $j_z$ on $z$ preserving the inner product. 
The isomorphism of the space of oriented negative $2$-planes with 
\be\label{Q-model}
 \{\ w\in V(\C)\mid (w,w)=0, (w,\bar w)<0\ \}/\C^\times \quad \subset\  \mathbb P(V(\C))
\ee
is realized by sending $z$ to $w(z)$, the $+i$-eigenspace of $j_z$ on the complexification $z_\C$. 

Note that $U^\perp = V_0+U$ is positive semidefinite, so the projection of $V$ to $U'$ with kernel $V_0+U$ 
induces an isomorphism of any negative $2$-plane $z$ with $U'$.  In particular, an orientation of $z$ induces an orientation on $U'$
 and on $U$. 
The projections $\pr_{U'}(w(z))$ and $\pr_{U'}(\bar w(z)) \in U'_\C$ 
form a basis, so that, up to scaling, we can write
$$ w = \bpm u \\ w_0 \\ \tau_1\\ 1\epm,\qquad u\in \C^2, \ w_0\in V_{0}(\C), \ \tau_1\in \C-\R.$$
We assume that the orientations are chosen so that $D$ is the component for which $\tau_1\in \H$ and we write $Q$ for the corresponding 
component of (\ref{Q-model}). 

For a pair $\tau_1$ and $\tau_2\in \H$, let 
$$w(\tau_1,\tau_2) = \bpm -\tau_2\\ \tau_1\tau_2\\ 0\\ \tau_1\\ 1\epm  = \bpm -\tau_2 J \\ 0 \\ 1_2\epm \bpm \tau_1\\1\epm.$$
Note that 
$(w,\bar w) = -4 v_1 v_2.$
In particular, $|y|^2 = 2v_1v_2$ in the notation of \cite{Bints}, (1.10), p.299.
Then there is an isomorphism
\be\label{gpactioncoord}
i:   \H \times \H \times V_0^2(\R)\isoarrow Q,
\ee
defined by 
$$ i(\tau_1,\tau_2,v_0) = n(v_0,0)\cdot w(\tau_1,\tau_2) = \bpm \big(\ -\tau_2\,J - Q(v_0)\ \big) \bpm \tau_1\\1\epm \\ v_0\bpm \tau_1\\ 1\epm \\ \tau_1\\ 1\epm
 = \bpm  -\tau_2 - \frac12(v_{01}, w_0)\\ \nass\nass \tau_1\tau_2 - \frac12(v_{02}, w_0)\\ \nass w_0 \\  \tau_1\\ 1 \epm, $$
 where $w_0= v_0\bpm \tau_1 \\ 1 \epm$.   Note that the top entries do not depend holomorphically on $w_0$.  The problem is that $\tau_2$ is not 
 a natural holomorphic coordinate when $w_0\ne0$. To fix this, we write our vector as
 \be\label{w-coords}
  w= \bpm -\tau_2'\\ \tau_1\tau_2' - Q(w_0) \\ w_0 \\ \tau_1\\1\epm,
  \ee
 where
 \be\label{deftau2prime}
 \tau_2^{\prime}= \tau_2^{\phantom{\prime}}+\frac12 (v_{01}, w_0)\in \C.
 \ee
This then satisfies $(w,w)=0$ and varies holomorphically with $\tau_1\in \H$, $\tau_2'\in \C$ and $w_0\in V_{0}(\C)$, subject to 
\be\label{siegel3}
0> (w,\bar w) =  - Q(w_0-\bar w_0) - 4 v_1 v_2'  . 
\ee
Since $Q(w_0-\bar w_0) = - 4 v_1^2 Q(v_{01})$, this just amounts to the condition 
\be\label{siegel-II}
v_2'> v_1 Q(v_{01}).\ee

\begin{rem} In the case of signature $(3,2)$ and quadratic form 
\be\label{sig-32}
\bpm {}&{}&1_2\\{}&2&{}\\ 1_2&{}&{}\epm,
\ee
we have $Q(v_{01})=v_{01}^2$ and condition (\ref{siegel-II}) just says that 
$$\bpm \tau_1&w_0\\ w_0&\tau_2'\epm \in \H_2,$$
\end{rem}
the Siegel space of genus $2$. 

In the calculations that follow, we have chosen to work with the `group action' coordinates 
(\ref{gpactioncoord}) and to recover the `holomorphic coordinates' 
(\ref{w-coords})
by a substitution at the end. One could, alternatively, work with the holomorphic coordinates throughout.  

\subsection{Boundary components}  Let $\bar Q$ be the closure of $Q$ 
in $\mathbb P(V(\C))$ (in the complex topology), and note that the set $\d Q = \bar Q - Q$ consists of certain isotropic lines in $V(\C)$. 
The rational point boundary components in $\d Q$ are the isotropic lines in $V(\Q)$. If $U\subset V$ is an isotropic plane, then the associated 
rational $1$-dimensional boundary component is the set 
$$\Cal C(U)= \{ \ w\in U(\C)\mid  U(\C) = \text{span}\{w, \bar w\} \ \}/\C^\times \quad \subset \d Q.$$
If a basis $e_1$, $e_2$ for $U$ is chosen, then there is an isomorphism
$$\mathbb P^1(\C) - \mathbb P^1(\R) \isoarrow \Cal C(U), \qquad \tau_1\mapsto \C (\tau_1 e_2 - e_1),$$
and the rational isotropic lines in $U$ correspond to points of $\mathbb P^1(\Q)$ and to the rational point boundary 
components in the closure of $\Cal C(U)$. For a choice of $U'$ with dual basis $e_1'$ and $e_2'$, we have Siegel domain coordinates 
$(\tau_1,\tau_2',w_0)$ as above, and, as $v_2'$ goes to infinity, the line in $Q$ spanned by the vector $w$ given by (\ref{w-coords}) goes to 
the isotropic line $\C(\tau_1 e_2- e_1)$ in $\Cal C(U)$. 
Finally, for a point $\C(\tau e_2 - e_1	)$ in $\Cal C(U)$, a basis for the open neighborhoods in the Satake 
topology\footnote{Stricly speaking, we are describing the intersection of such an open set with $Q$.} is given by 
$$\{(\tau_1,\tau_2',w_0)\in Q\mid |\tau_1-\tau|<\e_1, w_0\in V_0(\C),  Q(w_0-\bar w_0)  + 4 v_1 v_2'  >\frac{1}{\e}\},$$
for $\e_1>0$ and $\e>0$, cf., for example, \cite{bruinier.freitag}, p.10, or  \cite{looijenga.IV}, p.542.

\subsection{Lattices}\label{subsection-lattice}  Suppose that $M$, $(\ ,\ )$,  is an even integral lattice in $V$ with dual lattice 
$M^\vee\supset M$.  Let $S_M\subset S(V(\A_f))$ be the subspace of functions supported in $M^\vee\tt_\Z\hat\Z$ which 
are translation invariant under $\hat M = M\tt_\Z\hat \Z$. This space is spanned by the characteristic functions $\ph_\l$ of the cosets 
$\l+\hat M$, for $\l\in M^\vee/M$.  Note that, if $L\subset M$ is a sublattice, then $S_M\subset S_L$. 

For a given Witt decomposition (\ref{eq1}), we construct a compatible sublattice $L$ of $M$ as follows. 
Note that 
$$M\ \supset \ M_{U'} + M_0 + M_U,$$
where $M_U = M\cap U$, $M_{U'} = M\cap U'$ and $M_0 = M\cap V_0$. 
Let 
$$M_U^\vee = \{ u\in U\mid (u,M_{U'})\in \Z\},$$
so that $M_U^\vee \supset M_{U}$ and define $M_{U'}^\vee \supset M_{U'}$ analogously. Let $N\in \Z_{>0}$ 
be\footnote{We could require that $N$ be the smallest such integer.} 
such that $N \cdot M_{U'}^\vee \subset M_{U'}$, and let 
\be \label{def-L}
L = N \cdot M_{U'}^\vee + M_0 + M_U  = L_{U'} + L_0 + L_U.
\ee
Then,
\be\label{dual-L}
L^\vee = N^{-1} L_{U'} + L_0^\vee + N^{-1}L_U,
\ee
and, taking $e_1$ and $e_2$  a basis for $L_U$, with dual basis $e_1'$ and $e_2'$ for $U'$, as before,  in our coordinates (\ref{x-coord}), 
$x$ will be in $L$ for $x_2\in \Z^2$, $x_0\in L_0$ and $x_1\in N\Z^2$.

Let $\Gamma_M$ be the subgroup of $\Aut(M)$ that acts trivially on $M^\vee/M$ and define $\Gamma_L$ analogously. 
Since 
$$L\subset M\subset M^\vee \subset L^\vee,$$
we have $\Gamma_L\subset \Gamma_M$ of finite index. 
Thus, automorphic forms on $D$ with respect to $\Gamma_M$ can be viewed as automorphic forms with respect to $\Gamma_L$ 
with some additional conditions.  We will sometimes work with a neat subgroup $\Gamma\subset \Gamma_M$ of finite index. 
This allows us to avoid orbifold issues when discussing the geometry.

\section{Theta series and the Borcherds lift}

\subsection{The Borcherds lift} In working with the Borcherds lift, we use the adelic setup and notation of \cite{Bints} to which 
we refer the reader for unexplained notation. In particular, $G'_{\A}$ (resp. $G'_\R$) 
is the metaplectic cover of $\SL_2(\A)$ (resp, $\SL_2(\R)$) and $\Gamma'$ is the inverse image of $\SL_2(\Z)$ in $G'_\R$ . 

The input to our Borcherd lift will be a weakly holomorphic modular form $F$ on $G'_\A$ valued in $S_M$ of weight $\ell = 1-\frac{n}2$
whose Fourier expansion is 
\be\label{F-tau}
F(g'_\tau) = v^{-\ell/2}\sum_{m} c(m)\,q^m,
\ee
where $c(m) \in S_M$. For any sublattice $L\subset M$, we  can write 
\be\label{cm-l} 
c(m) = \sum_{\l\in L^\vee/L} c_\l(m) \,\ph_\l
\ee
with respect to the coset basis $\ph_\l$ for $S_L$.  
For an oriented negative $2$-plane $z\in D$, we let 
$$(x,x)_z = (x,x) + 2 R(x,z), \qquad R(x,z) =|(\pr_z(x), \pr_z(x))|, $$
be the corresponding majorant, where $\pr_z(x)$ is the $z$-component of $x$ with respect to the 
decomposition $V(\R) = z^{\perp} + z$. Let
$$\ph_{\infty}(x,z) = \exp(-\pi (x,x)_z),$$ 
be the corresponding Gaussian.  For a Schwartz function $\ph\in S(V(\A_f))$  and $\tau\in \H$, there is a theta function 
\be\label{theta.fun}
\theta(g'_\tau,\ph_\infty(z)\ph) = \sum_{x\in V(\Q)} \o(g'_\tau)\ph_\infty(x,z)\,\ph(x).
\ee
We can view this as defining a family of distributions $\theta(g'_\tau,\ph_\infty(z))$ on $S(V(\A_f))$, depending on $\tau$ and $z$, and it will be convenient 
to write $\gs{\ph}{\theta(g'_\tau,\ph_\infty(z)}$ for the pairing of such a  distribution with $\ph$. 
Pairing with the $S(V(\A_f))$-valued function $F$,  
we get an $\SL_2(\Z)$-invariant function
$\bgs{F(g'_\tau)}{\theta(g'_\tau,\ph_\infty)}$
on $\H$. 
We want to compute the regularized theta lift
\begin{equation}\label{reg-int}
\P(z;F)=\int_{\Gamma'\back \frak H}^{\text{reg}} \gs{F(g'_\tau)}{\theta(g'_\tau,\ph_\infty(z))}\,
v^{-2}\,du\,dv
\end{equation}
in the coordinates of section~\ref{siegel-3rd} associated to a $1$-dimensional boundary component. 

Recall that the regularization used by Borcherds is defined as follows. 
Let $\xi$ be a $\Gamma'$ invariant (smooth) 
function on $\frak H$, satisfying the following two conditions:
\begin{enumerate}
\item
There exists a constant $\s$ such that the limit
$$\phi(s,\xi)=\lim_{T\rightarrow\infty} \int_{\Cal F_T} \xi(\tau)\, v^{-s-2}\,du\,dv$$
exists for $\text{\rm Re}(s)>\s$ and defines a holomorphic function 
of $s$ in that half plane.
\item
The function $\phi(s,\xi)$ has a meromorphic continuation
to a half plane $\text{\rm Re}(s)>-\e$ for some $\e>0$.
\end{enumerate}
Then the regularized integral
$$\int_{\Gamma'\back\frak H}^{\text{reg}}  \xi(\tau)\, v^{-2}\, du\,dv$$
is defined to be the {constant term} of the Laurent expansion of $\phi(s,\xi)$ at $s=0$. 

\subsection{Another product formula}  

One of Borcherds' main results in \cite{borch98} is that  the regularized theta integral $\P(z;F)$ 
can be written as 
\be\label{borch.form}
\P(z;F) = -2\log|\Psi(z;F)|^2 - c_0(0)\big(\ \log|y|^2 + \log(2\pi) -\gamma),
\ee
where 
$\Psi(F)$ is a meromorphic modular form of weight  $c_0(0)/2$ on $D$ and $y$ is the imaginary part of 
$z$ in a tube domain model associated to an isotropic line.  In a suitable neighborhood of the corresponding point rational boundary 
component, he shows that $\Psi(z;F)$ has a product expansion. 
Our main result is another product expansion for $\Psi(z;F)$, valid in a neighborhood of a $1$-dimensional 
rational boundary component.  We will explain the relation between the two products in section~\ref{section-compare}. 
Our computation is, in fact, quite analogous to that given in \cite{borch98} and, as a byproduct, we also 
derive (\ref{borch.form}) and another proof of the existence of $\Psi(z;F)$. 

Here is our main result. 

\begin{theo} \label{mainthm}
Suppose that the lattice $L$ is chosen as in section~\ref{subsection-lattice} and that 
$$F(\tau) = \sum_{m} c(m) \, q^m$$ 
is a weakly holomorphic $S_L$-valued modular form of weight $-\ell = 1-\frac{n}2$,  type $\rho_L$ and integral coefficients for $m\le 0$.  
There are positive constants $A$ and $B$, depending on $F$ and on the Witt decomposition (\ref{def-L}), cf. Lemma~\ref{lem-tedious}, such that 
in a region of the form  
$$v_2' > (A+Q(v_{01}))v_1 + B v_1^{-1},$$
the Borcherds form $\Psi(F)$  is  the product of the three factors:\hfb
(i)
$$
\prod_{\substack{\l\\ \snass \l_{12}=0}}\prod_m \bigg( \prod_{\substack{a\in \l_{11}+N\Z\\ \snass a>0}}
\prod_{\substack{x_0\in \l_0+L_0\\ \snass
a\mid (m+Q(x_0)+a\l_{21})}} 
\big(1-q_1^b\, q_2^a\,e( - (x_0, w_0) -\L_2)\big) \bigg)^{c_\l(m)},$$
where $q_1= e(\tau_1)$, $q_2=e(\tau_2')$, $b = a^{-1}(m+Q(x_0)+ a\,\l_{21})$ and $\L_2 = \l_{21}\tau_1+\l_{22}$, \hfb
(ii)
$$\prod_m\prod_{\substack{\l \\ \snass \l_1=0}}
\bigg(\ \prod_{\substack{x_0\in \l_0+L_0\\ \snass Q(x_0)=m}}  \frac{\vartheta_1(-(x_0,w_0)-\L_2,\tau_1)}{\eta(\tau_1)}\,e(\,(x_0,w_0) + \frac12\,\L_2\,)^{\l_{21}}
\ \bigg)^{c_\l(-m)/2},
$$
where the factor for  $m=0$ and $\l=0$ is omitted,  
and\hfb
(iii) 
$$\kappa\, \eta(\tau_1)^{c_0(0)}\,q_2^{I_0},$$
where $\kappa$ is a constant of absolute value $1$ and 
$$I_0 = -  \sum_{m} \sum_{\substack{\l \\ \snass \l_1=0}}
\sum_{x_0\in \l_0+L_0} c_\l(-m)\,\s_1(m-Q(x_0)),$$
with $\s_1(0) = -\frac1{24}$ and $\s_1(r)=0$ for $r\notin \Z_{\ge0}$. 
\end{theo} 

\begin{rem}
 (1) The factor (i) converges and, in particular, has no zeroes or poles in our region near the boundary.  Moreover, its limit 
as $q_2\lra 0$ is $1$.\hfb 
(2) The finite product in factor (ii)
is independent of $\tau_2'$, and, in expanded form (\ref{factor-II-expanded}),  has evident zeroes or poles on the set of $(w_0, \tau_1)$'s where $(x_0,w_0)+\L_2=0$ for some $x_0\in L_0$ 
with  $c_{\l}(-Q(x_0))\ne 0$. 
The regularized integral itself is actually {\it finite} on these `walls'. This is because, just as in Borcherds' case, 
the integral is `over-regularized'.  Its values on the walls can be computed by using 
the expression in (\ref{rank1-sing}) to calculate the contribution of each term (\ref{basic-II-0}) for which $(x_0,w_0)+\L_2=0$. 
We omit this calculation.\hfb
(3) Finally, the factor (iii) gives the order of the pole or zero of the Borcherds form along the compactifying divisor, whose 
(semi-)local equation is $q_2=0$.  The regularized value along this divisor, obtained by removing the factor $q_2^{I_0}$,  is given by 
the product of theta functions in factor (ii) and the factor $\kappa\,\eta(\tau_1)^{c_0(0)}$. 
\end{rem}

\subsection{A more intrinsic variant}

In the statement of Theorem~\ref{mainthm}, we have written our product formula more or less in the expanded form that 
arises from the computations of section 3.  We next describe an alternative, more intrinsic version. 

First note that if $x\in \l+L$ with $(x, e_2)=0$, then $\l_{12}=0$, and we have
\be \label{x-coords} 
x = x_0 + a e_1' + (\l_{21}-b)\,e_1 + (\l_{22} - c)\,e_2 ,
\ee
where $x_0\in \l_0+L_0$, $a\in \l_{11}+ N\Z$, $b$, $c \in \Z$. 
Then, for $w$ as in (\ref{w-coords}),
$$-(x,w) = a\tau_2'+b\tau_1 + c - (x_0,w_0) - \l_{21}\tau_1-\l_{22},$$
and $e(-(x,w))$ is independent of $c$. Note that $\Z e_2 = L\cap \Q e_2$.

Therefore the factor in (i) of Theorem~\ref{mainthm} can be written as
\be\label{intrinsic-(i)}
\prod_{\substack{x\in L^\vee\\ \snass (x,e_2)=0\\ \snass (x,e_1)>0\\ \snass \mod L\cap \,\Q\, e_2}} \big(1-e(-(x,w))\ \big)^{c(-Q(x))(x)}.
\ee
Here, recall that $c(m)\in S_L\subset S(V(\A_f))$ so that $c(-Q(x))(x)$ is simply the value of the Schwartz function $c(-Q(x))$ at $x$, i.e., is
$c_\l(-Q(x))$ if $x\in \l+L$  and $0$ otherwise.  
The expression (\ref{intrinsic-(i)}) depends only on the choice of $U$ and of the basis $e_1$, $e_2$ for $L\cap U$. 
This choice of basis might be viewed as the analogue in our situation of the choice of Weyl chamber which occurs in the 
standard Borcherds product. 

The factor in (ii) of Theorem~\ref{mainthm} also has a more intrinsic expression. 
First we examine the range of the product. 
Recall that the isotropic $2$-plane $U$ determines a filtration $0\subset U\subset U^\perp \subset V$. 
A vector $x\in L^\vee$ lies in $L^\vee\cap U^\perp$ precisely when it is given as in (\ref{x-coords}) with $a=0$.
The vector $x$ then lies in $L^\vee \cap U$ precisely when $Q(x)=Q(x_0)=0$, since this condition implies that $x_0=0$. 

For a given $x \in L^\vee \cap U^\perp$, we have
$\l_{21} = (x, e_1')$, $\l_{22} = (x, e_2')$, and 
$$\L_2 = 
(w,e_2)^{-1}\big(\ (x,e_1')(w,e_1) + (x,e_2')(w,e_2)\ \big) 
= (w, e_2)^{-1} (x_U,w),$$
where $(x_U, w)$ is the pairing of the $U$-component $x_U$ of $x$ with the $w$, a quantity which, 
for a given $x$ and $w$,  
depends only on the Witt decomposition and not on the choice of basis $e_1$, $e_2$. 
Here we have written an expression for $\L_2$ that does not depend on the normalization $(w,e_2)=1$
of $w$.

Retaining the normalization $(w,e_2)=1$, the factor in (ii) can be written as the product of two factors, 
\be\label{intrinsic-(ii)-1}\prod_{\substack{x\in L^\vee \cap U^\perp\\  \snass \mod L\cap U\\ \snass Q(x)\ne0}} 
\bigg(\ \frac{\vartheta_1(-(x,w),\tau_1)}{\eta(\tau_1)} \, e((x,w)-\frac12 (x_U,w))^{(x,e_1')}\ \bigg)^{c(-Q(x))(x)/2},
\ee
and a factor arising from $x$ with $Q(x)=0$, i.e., $x_0=0$, so that $x=x_U$, 
\be\label{intrinsic-(ii)-2}
\prod_{\substack{x\in L^\vee \cap U\\  \snass \mod L\cap U\\ \snass x\ne 0}} 
\bigg(\ \frac{\vartheta_1(-(x,w),\tau_1)}{\eta(\tau_1)} \, e(\frac12(x,w))^{(x,e_1')}\ \bigg)^{c(0)(x)/2}%\\
\ee
We have separated out the factor (\ref{intrinsic-(ii)-2}) since it depends only on $\tau_1$. 

In both (\ref{intrinsic-(ii)-1}) and (\ref{intrinsic-(ii)-2}) a square root has been taken, since it is only assumed that the Fourier coefficients $c_\l(-m)$ of $F$ for $m\in \Z_{>0}$ 
are integers. On the other hand, we know that $c_\l(-m) = c_{-\l}(-m)$ for all $m$. Thus we can choose a particular square root in (\ref{intrinsic-(ii)-1})
as follows. 
Let 
\be\label{V0-roots-1} 
R_0(F)= \{ \a_0\in L_0^\vee\mid Q(\a_0)>0, \ c(-Q(\a_0))(\a_0+\a_2)\ne 0\ \text{for some $\a_2\in L_U^\vee$} \}. 
\ee
These are precisely the $x_0$ components of vectors $x$ that appear in the product (\ref{intrinsic-(ii)-1}). 
Let $W_0$ be a connected component of the complement of the hyperplanes, $\a_0^\perp$, $\a_0\in R_0(F)$, in $V_0(\R)$. 
We refer to $W_0$ as a Weyl chamber in $V_0(\R)$. 

Then we can write (\ref{intrinsic-(ii)-1}) as 
\be\label{intrinsic-(ii)-1b}
\pm i^*\prod_{\substack{x\in L^\vee \cap U^\perp\\  \snass \mod U\cap L\\ \snass (x,W_0)>0}} 
\bigg(\ \frac{\vartheta_1(-(x,w),\tau_1)}{\eta(\tau_1)} \, e((x,w)-\frac12 [x,w]_U)^{(x,e_1')}\ \bigg)^{c(-Q(x))(x)},
\ee
where the sign depends on the choice of square roots in (\ref{intrinsic-(ii)-1}) and 
$$* = \sum_{\substack{x\in L^\vee \cap U^\perp\\  \snass \mod U\cap L\\ \snass (x,W_0)>0}} c(-Q(x))(x).$$
A change in the choice of $W_0$ simply changes (\ref{intrinsic-(ii)-1b})  by a sign. 

Next recall that, for any even integral lattice $M\subset M^\vee$ and Witt decomposition (\ref{eq1}), we have associated, in section~\ref{subsection-lattice}, 
a lattice $L\subset M$ that is compatible with the Witt decomposition, so that (\ref{def-L}) and (\ref{dual-L}) hold.   Note that, by construction, 
$L\cap U= M\cap U$ and hence $L\cap \Q e_2 = M\cap \Q e_2$.  Since $S_M\subset S_L$, a 
weakly holomorphic form $F$ valued in $S_M$ can be viewed as a weakly holomorphic form valued in $S_L$ and, in a neighborhood of the $1$-dimensional boundary 
component associated to $U$,  the Borcherds form $\Psi(F)$ is given as the 
product of the factors just described.  Note that, since $c(m)\in S_M$, it follows that  if $c(m)(x) \ne 0$ for some $x\in V(\Q)$
then $x\in M^\vee$.  Thus, all of the expressions just given for the factors of $\Psi(F)$ can be rewritten in terms of $M$, 
and, we obtain a more intrinsic version of our product formula. 

\begin{cor}\label{gen-cor} Let $M$ be an even integral lattice in $V$ and let $F$ be an $S_M$-valued weakly holomorphic form with associated 
Borcherds form $\Psi(F)$.  Let $U \subset V$ be an isotropic $2$-plane and choose a Witt decomposition (\ref{eq1}) and a $\Z$-basis $e_1$ and $e_2$ 
for $M\cap U$ with dual basis $e_1'$, $e_2'$ for $U'$. Suppose that $w$ is normalized so that $(w,e_2)=1$ and let $(w,e_1)=\tau_1$. 
Then $\Psi(F)(w)$ is the product of four terms:\hfb
(a) 
$$
\prod_{\substack{x\in M^\vee\\ \snass (x,e_2)=0\\ \snass (x,e_1)>0\\ \snass \mod M\cap \,\Q\, e_2}} \big(1-e(-(x,w))\ \big)^{c(-Q(x))(x)}.
$$
(b)
$$
\prod_{\substack{x\in M^\vee \cap U^\perp\\  \snass \mod  M\cap U\\ \snass (x,W_0)>0}} 
\bigg(\ \frac{\vartheta_1(-(x,w),\tau_1)}{\eta(\tau_1)} \, e((x,w)-\frac12 (x_U,w))^{(x,e_1')}\ \bigg)^{c(-Q(x))(x)},
$$
where $x_U = (x,e_1')e_1+(x,e_2')e_2$ is the $U$-component of $x$ and $W_0$ is a `Weyl chamber'  in $V_0(\R)$, \hfb
(c)
$$
\prod_{\substack{x\in M^\vee\cap U/ M\cap U\\ \snass x\ne 0}} 
\bigg(\ \frac{\vartheta_1(-(x,w),\tau_1)}{\eta(\tau_1)} \,  e(\frac12(x,w))^{(x,e_1')}\ \bigg)^{c(0)(x)/2}
$$
(d) and 
$$\kappa\, \eta(\tau_1)^{c(0)(0)}\,q_2^{I_0},$$
where $\kappa$ is a scalar of absolute value $1$, and 
$$I_0 = -  \sum_{m} \sum_{\substack{x\in M^\vee\cap U^\perp\\ \snass \mod M\cap U}}
 c(-m)(x)\,\s_1(m-Q(x)).$$
\end{cor}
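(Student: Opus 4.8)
The plan is to deduce Corollary~\ref{gen-cor} from Theorem~\ref{mainthm} by a purely formal translation of the index sets, exploiting the compatibility between $M$ and the auxiliary lattice $L$ constructed in section~\ref{subsection-lattice}. Given an even integral lattice $M$ and an isotropic $2$-plane $U$, I first invoke the construction of section~\ref{subsection-lattice} to produce the compatible sublattice $L\subset M$ satisfying (\ref{def-L}) and (\ref{dual-L}), with $L\cap U = M\cap U$. Since $S_M\subset S_L$, the input form $F$ valued in $S_M$ is in particular an $S_L$-valued weakly holomorphic form of the same weight, type, and with integral coefficients for $m\le 0$, so Theorem~\ref{mainthm} applies to $F$ viewed in $S_L$, and in a region of the stated shape $\Psi(F)$ is the product of the factors (i), (ii), (iii) there. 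The whole content of the corollary is then to recognize that, because $c(m)\in S_M$, the Schwartz function $c(m)$ vanishes at every $x\notin M^\vee$, so each sum or product over vectors $x\in L^\vee$ (or over cosets of $L$) collapses to the corresponding sum or product over $x\in M^\vee$ (or over cosets of $M$), with $c(-Q(x))(x)$ the intrinsic value of the Schwartz function.

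Concretely, I would carry this out factor by factor. For factor (a): by the discussion around (\ref{x-coords}), the coset condition ``$\l_{12}=0$, $a\in\l_{11}+N\Z$, $a>0$, $a\mid(m+Q(x_0)+a\l_{21})$, $x_0\in\l_0+L_0$, modulo $c$'' in (i) of Theorem~\ref{mainthm} is exactly the condition ``$x\in L^\vee$, $(x,e_2)=0$, $(x,e_1)>0$, modulo $L\cap\Q e_2$'' with $b=a^{-1}(m+Q(x_0)+a\l_{21})$ forced, as already spelled out in (\ref{intrinsic-(i)}); since $L\cap\Q e_2 = M\cap\Q e_2$ and $c(-Q(x))(x)=0$ unless $x\in M^\vee$, this becomes (a). For factor (b): I combine factors (ii) of Theorem~\ref{mainthm} (with its $m=0$, $\l=0$ term omitted) with the vanishing of $c(-Q(x))(x)$ off $M^\vee$ and the identity $\L_2=(w,e_2)^{-1}(x_U,w)$ derived in the ``intrinsic variant'' subsection, to rewrite (\ref{intrinsic-(ii)-1}) and (\ref{intrinsic-(ii)-2}) over $M^\vee\cap U^\perp \bmod M\cap U$ and $M^\vee\cap U \bmod M\cap U$ respectively; the first of these is (b), the second (c). Here I must also recall from (\ref{V0-roots-1})ff. that a choice of Weyl chamber $W_0\subset V_0(\R)$ only affects the product by an overall sign, which is absorbed into $\kappa$. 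For factor (d): the exponent $I_0$ in (iii) involves $\sum_{\l\colon\l_1=0}\sum_{x_0\in\l_0+L_0}$, which is precisely a sum over $x\in L^\vee\cap U^\perp$ modulo $L\cap U$; again using $c(-m)(x)=0$ off $M^\vee$ and $\eta(\tau_1)^{c(0)(0)}$ unchanged, this is the exponent in (d), and $q_2=e(\tau_2')$ as before.

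The main obstacle --- really the only nontrivial point --- is bookkeeping of the index sets and cosets: one must check carefully that replacing $L^\vee/L$-cosets by $M^\vee/M$-cosets does not change any of the four products, i.e. that every vector contributing a nonzero exponent already lies in $M^\vee$, that the quotient by $L\cap\Q e_2$ versus $M\cap\Q e_2$ agrees (which follows from $L\cap U=M\cap U$), and that the ``mod $L\cap U$'' versus ``mod $M\cap U$'' quotients in factors (b), (c), (d) agree for the same reason. There is also a small compatibility to note: the auxiliary $N$ and the sublattice $L$ depend on the chosen $U$ and basis $e_1,e_2$, but none of the \emph{intrinsic} expressions (a)--(d) refer to $L$ or $N$ at all, so the final formula depends only on $M$, $U$, and the basis. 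Once these identifications are made, the region of validity stated in Theorem~\ref{mainthm} carries over verbatim (the constants $A,B$ now depending on $F$ and the Witt decomposition via Lemma~\ref{lem-tedious}), and the corollary follows. I expect no analytic difficulty here; the work done in section~\ref{subsection-lattice} and in the ``intrinsic variant'' subsection has already isolated every ingredient, and the proof is essentially a matter of assembling them and citing Theorem~\ref{mainthm}.
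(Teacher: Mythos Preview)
Your proposal is correct and follows essentially the same approach as the paper: the corollary is deduced from Theorem~\ref{mainthm} by first rewriting the factors (i)--(iii) in the intrinsic forms (\ref{intrinsic-(i)}), (\ref{intrinsic-(ii)-1}), (\ref{intrinsic-(ii)-2}), then using $L\cap U=M\cap U$ and the vanishing of $c(m)(x)$ off $M^\vee$ to replace $L$ by $M$ throughout, with the Weyl-chamber sign absorbed into $\kappa$. There is nothing to add.
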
 
Here the constant $\kappa$ may differ from that in Theorem~\ref{mainthm} due to the slight shift in the factor (b). 
Notice that a nice feature of this version is that we do not need to worry about coordinates on $D$. The value $\Psi(F)(z)$ 
is simply given by evaluating on the (unique) $w$ in (\ref{Q-model}) associated to $z$  scaled so that $(w,e_2)=1$.  

\subsection{Theta translates}
Next  we would like to clarify the meaning of the, at first sight peculiar, factor which occurs together with the function $\eta(\tau_1)^{-1}\,\vartheta(-(x,w),\tau_1)$ 
in factors (b) and (c). 
We first 
recall some basic facts about theta functions, following the conventions of Mumford, \cite{mumfordAV}, Chapter 1.  
The Jacobi theta function $\vartheta_1(z,\tau)$ coincides with $\vartheta_{11}(z)$ in the classical notation 
of, say, Weber, \cite{weber}, equation (4) on p.84.  For the lattice $L_\tau = \Z\tau + \Z$, consider the alternating form 
$E(a_1\tau+b_1, a_2\tau+b_2) = a_1 b_2-a_2 b_1$ and Hermitian form $H=H_\tau$ on $\C$ given by 
$H(z_1,z_2) = z_1 v^{-1} \bar z_2$ where $\tau=u+iv$.  Define $\a_0(a\tau+b) = e(\frac12 ab)$, 
$\l_{11}(a\tau+b) = (-1)^{a+b}$ and $\a =\a_0\,\l_{11}$. 
Let $\Th(L_\tau,H_\tau,\a)$ be the corresponding  space of theta functions, i.e., the space of holomorphic functions of $z\in \C$ such that, 
for all $\ell \in L_\tau$, 
$$\theta(z+\ell) = \a(\ell) \, \exp(\pi H(z,\ell) + \frac12 \pi H(\ell,\ell))\, \theta(z).$$
This space has dimension $1$.  It is convenient and traditional to define $B(z_1,z_2) = z_1 v^{-1} z_2$ and to renormalize by setting 
$$\theta^*(z) = \exp(-\frac12\pi B(z,z))\,\theta(z).$$
Now, for $\ell = a\tau+b$,  we have 
\be\label{theta11trans}
\theta^*(z+\ell) = \l_{11}(\ell) \, e( \,a z + \frac12 a^2\tau)^{-1}\,\theta^*(z).
\ee
and we write $\Th^*(L_\tau,H_\tau,\a)$ for the corresponding space of theta functions. 
The function $\vartheta_{11}$ is then a basis vector for the space $\Th^*(L_\tau,H_\tau,\a)$. For 
example, 
(5) p.72 of \cite{weber} is precisely (\ref{theta11trans}). 

For $\eta = \eta_1\tau +\eta_2$ with $\eta_1$ and $\eta_2\in \R$ and  for $\theta\in \Th(L_\tau,H_\tau,\a)$, let
$$\theta_{\eta}(z) = \exp( - \pi H(z,\eta))\,\theta(z+\eta).$$
Then $\theta_\eta \in \Th(L_\tau, H_\tau, \a\,\gamma_\eta)$, where $\gamma_\eta: L \rightarrow \C^1$ is the character defined by 
$$\gamma_\eta(\ell) = e(E(\eta,\ell)).$$
This just amounts to the isomorphism
$$T_\eta^*\mathcal L(H_\tau,\a) \simeq \mathcal L(H_\tau, \a\,\gamma_\eta)$$
of the Proposition on p.84 of \cite{mumfordAV}. 
It is easy to check that 
$$(\theta_\eta)^*(z) = \exp(-\pi (H-B)(z,\eta) + \frac12 \pi B(\eta,\eta))\,\theta^*(z+\eta),$$
so that $(\theta_\eta)^*$ is a renormalized translate of $\theta^*$.  However,  
since the quantity $B(\eta,\eta)$ does not depend holomorphically on $\tau$, it is better to include an 
extra factor (independent of $z$) and set
\begin{align}
(\theta^{*})^{\sharp}_\eta(z) &= \exp(-\frac{\pi}2 H(\eta,\eta))\, (\theta_\eta)^*(z)\notag\\
\nass
{}&= e(\eta_1 z + \frac12\eta_1\eta)\,\theta^*(z+\eta)\label{theta-eta}\\
\nass
{}&= \a_0(\eta)\, e(\eta_1 z + \frac12\eta_1^2 \tau)\,\theta^*(z+\eta).\notag
\end{align}
Then $(\theta^{*})^{\sharp}_\eta$ is again a basis for the space $\Th^*(L_\tau, H_\tau, \a\,\gamma_\eta)$.

In view of these remarks, we may write the  expression occurring in the product in (ii) of Theorem~\ref{mainthm} as a 
normalized translate by $\eta=-\L_2 = -\l_{21}\tau_1+\l_{22}$.  More precisely, setting
$$\Theta_1[\eta](z,\tau) = (\vartheta_{11})^{\sharp}_\eta(z),$$
for convenience, and inspecting expression (\ref{theta-eta}) for $(\theta^{*})^{\sharp}_\eta$, we have
$$\vartheta_1(-(x_0,w_0)-\L_2,\tau_1) \,e((x_0,w_0)\l_{21} + \L_2\l_{21}) =\Theta_1[-\L_2](-(x_0,w_0),\tau_1).$$

In the general case of Corollary~\ref{gen-cor},  we have a identifications, 
$$U(\R) \isoarrow \C, \quad\text{and}\quad  U(\R)/M\cap U \isoarrow \C/L_{\tau_1} = E_{\tau_1}, \qquad u\mapsto (u,w).$$
If $x\in M^\vee\cap U^\perp$, then the point $(x_U,w)$ attached to the $U$-component of $x$ determines a torsion point of $E_{\tau_1}$.
Writing
$$(x,w) = (x_0,w_0) + (x_U,w), $$
we have the expression
$$\vartheta_1(-(x,w),\tau_1) \, e((x,w)-\frac12 (x_U,w))^{(x,e_1')} = \Theta_1[ -(x_U,w)](-(x_0,w_0), \tau_1).$$
in the factors in (b) and (c). 
In particular, the factors 
$$\vartheta_1(-(x,w),\tau_1) \, e((x,w)-\frac12 (x_U,w))^{(x,e_1')} = \Theta_1[ -(x_U,w)](0, \tau_1)$$
occurring in (c) are thetanullwerte.
An easy computation using (\ref{theta-eta}) shows that, for $\ell\in L_{\tau_1}$, 
$$\Theta_1[\eta+\ell](z,\tau_1) = \a(\ell)\,e(\frac12 E(\eta,\ell))\,\Theta_1[\eta](z,\tau_1).$$
Thus, if the coset representatives in (b) and (c) are changed by elements of $M\cap U$, the theta translates are changed by certain 
roots of unity. 

\subsection{Local Borcherds products}\label{LBP}  In \cite{bruinier.freitag}, Bruinier and Freitag considered the local Picard group in a neighborhood of 
a generic point of a rational $1$-dimensional boundary component associated to an isotropic $2$-plane $U$.  
In particular, they introduced local Borcherds products 
attached to vectors $x\in M^\vee\cap U^\perp$, Definition~4.2, p.16. In our notation, such a product is given by 
$$\Psi_x(w) = (1- e((x,w))\,\prod_{a>0} (1- q_1^a \,e((x,w)))\, (1- q_1^a \,e(-(x,w))).$$
On the other hand, by the classical product formula (\ref{jacobi-II}), we have
$$\frac{\theta_1(-(x,w),\tau_1)}{\eta(\tau_1)} = -i q_1^{\frac1{12}}\,e(-\frac12(x,w))\,\Psi_x(w),$$
so that factor (b) in Corollary~\ref{gen-cor} is essentially a product of such local Borcherds products. 
Of course, this factor accounts for the divisor of $\Psi(F)$ in a neighborhood of the boundary component. 

\section{Fourier-Jacobi expansions}

In this section, we make explicit the information about the Fourier-Jacobi expansion of $\Psi(F)$ that is contained in our product formula as given
 in Corollary~\ref{gen-cor}.
For simplicity we normalize $\Psi(F)$ so that 
$\kappa=1$, for our fixed $U$,  and write the Fourier-Jacobi expansion as
\be\label{def-FJ}
\Psi(F)(w) = q_2^{I_0}\, \sum_{k\ge 0} \Psi_k(\tau_1,w_0)\, q_2^{k}.
\ee
Then the leading coefficient is
the product of the factors in (b), (c) and (d), with the power of $q_2$ in (d) omitted:
\begin{multline*}
\Psi_0(\tau_1,w_0) = \eta(\tau_1)^{c(0)(0)}\, \prod_{\substack{x\in M^\vee\cap U/ M\cap U\\ \snass x\ne 0}} 
\bigg(\ \frac{\vartheta_1(-(x,w),\tau_1)}{\eta(\tau_1)} \,  e(\frac12(x,w))^{(x,e_1')}\ \bigg)^{c(0)(x)/2}\\
\nass
\nass
\times \prod_{\substack{x\in M^\vee \cap U^\perp\\  \snass \mod  M\cap U\\ \snass (x,W_0)>0}} 
\bigg(\ \frac{\vartheta_1(-(x,w),\tau_1)}{\eta(\tau_1)} \, e((x,w)-\frac12 (x_U,w))^{(x,e_1')}\ \bigg)^{c(-Q(x))(x)}
\end{multline*}
Note that, in the product on the first line of this formula, $(x,w)$ does not depend on $\tau_2'$ or $w_0$; 
it has the form $\a \tau_1 + \b$ for $\a$, $\b\in \Q$ and hence $\vartheta_1(-(x,w),\tau_1)$ is a division point value of the 
Jacobi theta function.  The second line of the product gives the dependence on $w_0$.  We will make this more 
explicit in a moment. 

To compute more Fourier-Jacobi coefficients, we consider the product in (a) of Corollary~\ref{gen-cor}.
which we write in the form
$$\exp\bigg( -\sum_{x} c(-Q(x))(x) \sum_{n=1}^\infty \frac{1}{n} e(-n(x,w))\ \bigg).$$
Here $x$ runs over the same index set as in (a).
Note that, since $x\in M^\vee$ and $e_1$ is a primitive vector in $M$, we have $(x,e_1)=a\in \Z_{>0}$, and we can write
$$x = a e_1'+ \dot x$$
where $\dot x \in U^\perp$.  If $e_1'\in M^\vee$, then $\dot x\in M^\vee\cap U^\perp$, but this need not always be the case. In any case, the set of 
components $\dot x$ arising for $x\in M^\vee$ with $(x,e_2)=0$ is a union of $M\cap U$ cosets. 
We can write the product (a) as
%$$
%\prod_{\substack{x\in M^\vee\\ \snass (x,e_2)=0\\ \snass (x,e_1)>0\\ \snass \mod M\cap \,\Q\, e_2}} \big(1-e(-(x,w))\ \big)^{c(-Q(x))(x)}.
%$$
$$
\exp\bigg(  -\sum_{n=1}^\infty \frac{1}{n}  \sum_{a=1}^{\infty} q_2^{an} \sum_{\substack{\dot x\in U^\perp\\ \snass \mod M\cap U}} c(-Q(x))(x)\
e(-n(\dot x, w))\ \bigg),
$$
where $x = \dot x+ a e_1'$. 
Note that in this expression, we are evaluating $c(m)\in S_M\subset S(V(\A_f))$, for $m= -Q(\dot x+a e_1')$, on the vector $\dot x+ a e_1'$, 
and hence are imposing, in particular, the condition that $\dot x+ a e_1'\in M^\vee$. 
Thus, we obtain the following
striking formula. 
\begin{cor}
\be\label{GNformula}
\Psi(F)(w) = q_2^{I_0} \,\Psi_0(\tau_1,w_0)\,\exp\bigg( -\sum_{n=1}^\infty \frac{1}{n}  \sum_{a=1}^{\infty} q_2^{an}\,\Theta_{a,n}(F)(\tau_1,w_0)\ \bigg).
\ee
where
\be\label{FtoJacobi}
\Theta_{a,n}(F)(\tau_1,w_0) = \sum_{\substack{\dot x\in U^\perp\\ \snass \mod M\cap U}} c(-Q(x))(x)\
e(-n(\dot x, w)).
\ee
Here $\dot x + a e_1'$. 
\end{cor}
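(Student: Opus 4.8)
The plan is to derive (\ref{GNformula}) directly from Corollary~\ref{gen-cor}: the content is entirely a repackaging of the four factors there, with only one place — the reindexing of factor (a) — requiring a little care with convergence and index sets.

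First I would isolate the prefactor $q_2^{I_0}\,\Psi_0(\tau_1,w_0)$. By the definition (\ref{def-FJ}) of the Fourier--Jacobi expansion (with the normalization $\kappa=1$), $\Psi_0$ is by construction the product of factors (b) and (c) of Corollary~\ref{gen-cor} together with the piece $\eta(\tau_1)^{c(0)(0)}$ of (d), as displayed right after (\ref{def-FJ}): each of these three pieces is independent of $\tau_2'$ — for $x\in M^\vee\cap U^\perp$ one has $(x,w)=(x_0,w_0)+(x_U,w)$, involving only $\tau_1$ and $w_0$, and the factors in (c) are thetanullwerte in $\tau_1$ — while the explicit power $q_2^{I_0}$ of (d) is pulled out in front. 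This is consistent because factor (a) tends to $1$ as $q_2\to 0$ (Remark (1) after Theorem~\ref{mainthm}). Thus Corollary~\ref{gen-cor} already reads $\Psi(F)(w) = q_2^{I_0}\,\Psi_0(\tau_1,w_0)\cdot(\text{factor (a)})$, and it remains to identify factor (a) with the exponential in (\ref{GNformula}).

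Next I would rewrite factor (a) as an exponential and reindex. In the region of Theorem~\ref{mainthm} factor (a) converges absolutely, with $|e(-(x,w))|<1$ for every $x$ in its index set, so we may pass to logarithms termwise and use $\log(1-t)=-\sum_{n\ge1}t^n/n$ with $t=e(-(x,w))$, giving
\[
\text{(factor (a))}=\exp\Big(-\sum_x c(-Q(x))(x)\,\sum_{n\ge1}\frac1n\,e(-n(x,w))\Big),
\]
$x$ running over $\{x\in M^\vee:(x,e_2)=0,\ (x,e_1)>0\}$ modulo $M\cap\Q e_2$. Since $e_1$ is primitive in $M$ and $x\in M^\vee$, $a:=(x,e_1)$ is a positive integer, so I can write $x=a\,e_1'+\dot x$ with $\dot x\in U^\perp=V_0+U$. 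Using $(w,e_2)=1$ and the coordinate formulas of section~\ref{siegel-3rd}, one has $(e_1',w)=-\tau_2'$, hence
\[
e(-(x,w))=e(a\tau_2')\,e(-(\dot x,w))=q_2^{\,a}\,e(-(\dot x,w)),
\]
and $(\dot x,w)$ depends only on $\tau_1$ and $w_0$. Substituting, collecting terms by the value of $a$, and noting that $Q(a\,e_1'+\dot x)=Q(\dot x)+a\,(x,e_1')$ — so that, for fixed $a$, weak holomorphy of $F$ (vanishing of $c(m)$ for $m\ll 0$) bounds the relevant $\dot x$ and makes the $(\tau_1,w_0)$-series convergent — I obtain
\[
\sum_x c(-Q(x))(x)\,\sum_{n\ge1}\frac1n\,e(-n(x,w))=\sum_{n\ge1}\frac1n\,\sum_{a\ge1}q_2^{\,an}\,\Theta_{a,n}(F)(\tau_1,w_0)
\]
with $\Theta_{a,n}(F)$ exactly the sum (\ref{FtoJacobi}). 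Exponentiating and restoring the prefactor $q_2^{I_0}\,\Psi_0$ yields (\ref{GNformula}).

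The main obstacle is the bookkeeping in this last step: matching the index set of factor (a), namely $\{x\in M^\vee:(x,e_2)=0,\ (x,e_1)>0\}/(M\cap\Q e_2)$, with pairs $(a,\dot x)$ — $a\in\Z_{>0}$ and $\dot x$ in the appropriate quotient of $U^\perp$ by $M\cap U$ — and checking that the resulting triple series over $(a,n,\dot x)$ can be freely rearranged into the form on the right of (\ref{GNformula}). This rearrangement is legitimate because factor (a) was already shown to converge absolutely in the stated region, and because for each fixed $a$ the support condition on the $c(m)$ confines $\dot x$ to a set on which the series defining $\Theta_{a,n}(F)$ converges; everything else — the identity $e(-(x,w))=q_2^{\,a}e(-(\dot x,w))$ and the $\tau_2'$-independence of both $\Psi_0$ and $\Theta_{a,n}(F)$ — is immediate from the coordinates of section~\ref{siegel-3rd}.
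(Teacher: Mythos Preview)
Your proposal is correct and follows essentially the same route as the paper: the paper too identifies $q_2^{I_0}\Psi_0$ with factors (b), (c), and the $\eta$-part of (d), then rewrites factor (a) as $\exp\big(-\sum_x c(-Q(x))(x)\sum_{n\ge1}\tfrac1n e(-n(x,w))\big)$, decomposes $x=a\,e_1'+\dot x$ with $a=(x,e_1)\in\Z_{>0}$ by primitivity of $e_1$, and regroups using $e(-(x,w))=q_2^{\,a}e(-(\dot x,w))$. Your added remarks on absolute convergence and the reindexing of the $\dot x$-quotient make explicit what the paper leaves implicit.
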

Formulas of this sort occur frequently in the work of Gritsenko, \cite{gritsenko.AG}, Gritsenko-Nikulin, \cite{grit.nik, GK-I, GK-II}, 
Cl\'ery-Gritsenko, \cite{clery.gritsenko},  and others. 
Indeed, in these papers, (\ref{GNformula}) is essentially taken as the definition of a lift from suitable space of Jacobi forms to 
modular forms for $\text{\rm O}(n,2)$, and the modularity is proved by using information about generators for the
group $\Gamma_L$, as was was the case in the original paper of Borcherds, \cite{borch95}. 
Here we obtain these expansion form the regularized theta lift defined in Borcherds second paper, 
\cite{borch98} and hence we see that every Borcherds lift $\Psi(F)$ from that paper has such an expression. 

Expanding the exponential series, we obtain expressions for the Fourier-Jacobi coefficients of $\Psi(F)$. 
\begin{cor}  Writing $\Psi_k = \Psi_k(\tau_1,w_0)$ and $\Theta_{a,n} = \Theta_{a,n}(F)(\tau_1,w_0)$, 
\begin{align}\label{FJ.polys}
\Psi_1/\Psi_0 &=  \Theta_{1,1},\notag\\
\nass
\Psi_2/\Psi_0 & = -\Theta_{2,1} - \frac12\, \Theta_{1,2} +\frac12 \Theta_{1,1}^2 ,\\
\nass
\Psi_3/\Psi_0 & = -\Theta_{3,1} -\frac13 \Theta_{1,3} + \Theta_{1,1}\,\Theta_{2,1} +\frac12 \Theta_{1,1}\,\Theta_{1,2} - \frac{1}{3!} \Theta_{1,1}^3,\notag\\
\nass
\Psi_4/\Psi_0&= -\Theta_{4,1} -\frac12 \Theta_{2,2} -\frac14 \Theta_{1,4}+ \dots + \frac{1}{4!} \Theta_{1,1}^4\notag\\
\dots & \dots\notag
\end{align}
\end{cor}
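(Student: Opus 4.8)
The final corollary is a purely formal consequence of the product formula (\ref{GNformula}) already in hand, so the proof amounts to bookkeeping with power series in $q_2$. I would first substitute
$$\Psi(F)(w)=q_2^{I_0}\,\Psi_0(\tau_1,w_0)\,\exp\Big(-\sum_{n\ge1}\frac1n\sum_{a\ge1}q_2^{an}\,\Theta_{a,n}\Big)$$
into the definition (\ref{def-FJ}) of the Fourier--Jacobi expansion and cancel the common prefactor $q_2^{I_0}$. The leading coefficient $\Psi_0$ is the regularized value of $\Psi(F)$ along the compactifying divisor $q_2=0$, written out explicitly in (\ref{first-FJ}); in particular it is a holomorphic function of $(\tau_1,w_0)$ which is not identically zero, so one may divide the resulting identity of holomorphic functions by $\Psi_0$ and is reduced to the identity of formal power series in $q_2$
$$\sum_{k\ge0}\frac{\Psi_k}{\Psi_0}\,q_2^k=\exp\Big(-\sum_{n,a\ge1}\frac1n\,q_2^{an}\,\Theta_{a,n}\Big).$$

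\textbf{The combinatorics.} Next I would organize the double sum on the right by its total $q_2$-degree: set $S=\sum_{n,a\ge1}\frac1n\,q_2^{an}\,\Theta_{a,n}=\sum_{d\ge1}q_2^d\,S_d$, where $S_d=\sum_{a\mid d}\frac ad\,\Theta_{a,d/a}$. This regrouping is legitimate in the $q_2$-adic sense because for each $d$ only finitely many pairs $(a,n)$ satisfy $an=d$. Expanding $\exp(-S)=\sum_{j\ge0}\frac{(-1)^j}{j!}S^j$ and reading off the coefficient of $q_2^k$ --- again a finite sum, since only the products $S_{d_1}\cdots S_{d_j}$ with $d_1+\dots+d_j=k$ contribute --- yields the closed formula
$$\frac{\Psi_k}{\Psi_0}=\sum_{j\ge0}\frac{(-1)^j}{j!}\sum_{\substack{d_1,\dots,d_j\ge1\\ d_1+\dots+d_j=k}}S_{d_1}\cdots S_{d_j}.$$
Writing the $S_d$ back in terms of the $\Theta_{a,n}$ and specializing to $k=1,2,3,4$ (tracking signs) then produces the displayed identities (\ref{FJ.polys}); the ellipsis in the case $k=4$ is filled in by the $j=2$ and $j=3$ contributions of total degree four.

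\textbf{The (non-)obstacle.} There is no genuine difficulty here. Two small points deserve a word: that $\Psi_0\not\equiv0$, which makes the division legitimate and is immediate from (\ref{first-FJ}) together with the non-vanishing of $\eta$ and of the Jacobi theta factors off their divisors; and that all of the above rearrangements are valid in the $q_2$-adic topology, which follows from the fact already noted that only finitely many terms contribute to each fixed power of $q_2$. The genuine analytic convergence of the underlying product and series in the explicit region near the one-dimensional boundary component was established as part of the content of (\ref{GNformula}) and of Theorem~\ref{mainthm}, and nothing further is required.
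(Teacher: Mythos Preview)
Your proposal is correct and is exactly the paper's approach: the paper's entire proof is the single sentence ``Expanding the exponential series, we obtain expressions for the Fourier-Jacobi coefficients of $\Psi(F)$,'' and you have simply made that expansion explicit by grouping $S=\sum_{d\ge1}S_d\,q_2^d$ and reading off coefficients of $\exp(-S)$.

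One minor remark: carrying out your computation for $k=1$ actually gives $\Psi_1/\Psi_0=-S_1=-\Theta_{1,1}$, with a sign opposite to the first displayed line of the corollary (the lines for $k=2,3,4$ come out exactly as stated). This is a typo in the paper, not a defect in your argument.
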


To make these series more explicit, we choose $L\subset M$ as in (1.10) and write 
$$c(m) = \sum_{\l\in L^\vee/L} c_\l(m) \,\ph_\l,$$
as in (2.2). Recall that $M\cap U = L\cap U$. 
For $x= \dot x + a e_1'$, we have $\ph_\l(x) \ne 0$ implies that $\l_{12}=0$ and $\l_{11} \equiv a\mod N$. Then we write
\begin{align*}
x&= \dot x + a e_1'
= (\l_{21}-b)e_1 + (\l_{22}-c) e_2 + x_0 + a e_1'\\
\noalign{so that}
m&= -Q(x) = -Q(\dot x) -a (e_1',\dot x)
= -Q(x_0) + ab - a\l_{21}\\
\noalign{where $b$ and $c\in \Z$. 
Hence $a\mid (m+Q(x_0) +a\l_{21})$. Also}
\nass
(\dot x,w) &= (x_0,w_0) + (\l_{21}-b) \tau_1 +\l_{22} -c.
\end{align*}
With this notation,  we can write (\ref{FtoJacobi}) as
$$\Theta_{a,n}(F)(\tau_1,w_0)=\sum_{\substack{\l \\ \l_{12}=0\\ \snass \l_{11} \equiv a \text{ mod }(N)}} \sum_m c_\l(m)\,q_1^{a^{-1}nm}
\sum_{\substack{x_0\in \l_0 + L_0\\ \snass a\mid (m+Q(x_0) +a\l_{21})}}  q_1^{a^{-1}nQ(x_0)}\,e(-n(x_0,w_0)-n\L_2).  $$
where $\L_2= \l_{21}\tau_1+\l_{22}$. 
For $a=1$ and $n=1$, this is simply
$$\Theta_{1,1}(F)(\tau_1,w_0)=\sum_{\substack{\l \\ \l_{12}=0\\ \snass \l_{11} \equiv 1 \text{ mod }(N)}} \sum_m c_\l(m)\,q_1^{m}
\sum_{\substack{x_0\in \l_0 + L_0}}  q_1^{Q(x_0)}\,e(-(x_0,w_0)-\L_2).  $$
Here the divisibility condition in the inner sum has been dropped. Indeed, if 
$x\in \l+L$, we have $Q(x) \equiv Q(\l) \mod \Z$, and if  $c_\l(m)\ne 0$ we have $m+Q(\l)\in \Z$. Thus, for $x$ % = x_0 + a e_1' + (\l_{21}-b) e_1 + (\l_{22}-c) e_2$, 
as above with $a=1$, $m + Q(x_0) + \l_{21} \in \Z$.
%$$\Theta_{U^\perp,1,1}(F)(\tau_1,w_0)=\sum_{\substack{\l \\ \l_{12}=0\\ \snass \l_{11} \equiv 1 \text{ mod }(N)}} e(-\L_2)

The following transformation law is not difficult to check. 
\begin{lem}
Assume that $L_0$ is even integral, and for $b_1$ and $b_2\in L_0$, let $\L_b= b_1\tau_1+b_2$. 
Then 
$$\Theta_{a,n}(F)(\tau_1,w_0+\L_b) = e(-a n Q(b_1)\tau_1 -an (w_0,b_1))\, \Theta_{a,n}(F)(\tau_1,w_0) .$$
\end{lem}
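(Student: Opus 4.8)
The plan is to verify the transformation law directly from the explicit series
$$\Theta_{a,n}(F)(\tau_1,w_0)=\sum_{\substack{\l \\ \l_{12}=0\\ \l_{11} \equiv a \bmod N}} \sum_m c_\l(m)\,q_1^{a^{-1}nm}
\sum_{\substack{x_0\in \l_0 + L_0\\ a\mid (m+Q(x_0) +a\l_{21})}}  q_1^{a^{-1}nQ(x_0)}\,e(-n(x_0,w_0)-n\L_2),$$
by substituting $w_0 \mapsto w_0 + \L_b$ with $\L_b = b_1\tau_1 + b_2$, $b_1,b_2\in L_0$, and tracking how each factor changes. First I would note that, for fixed $\l$ and $m$, the sublattice coset $\l_0 + L_0$ is stable under the shift $x_0 \mapsto x_0 + b_1$ (here is where $b_1\in L_0$ is used), so I can reindex the inner sum by replacing $x_0$ with $x_0 - n\cdot$(something)—actually more simply, since the summand depends on $x_0$ through $q_1^{a^{-1}nQ(x_0)}$ and $e(-n(x_0,w_0)-n\L_2)$, I expand $(x_0, w_0+\L_b) = (x_0,w_0) + (x_0,b_1)\tau_1 + (x_0,b_2)$, and use $e(-n(x_0,b_2)) = 1$ because $(x_0,b_2)\in\Z$ (both lie in $L_0$, which is even integral, hence integral). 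So the only surviving change is the factor $e(-n(x_0,b_1)\tau_1) = q_1^{-n(x_0,b_1)}$.

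Next I would combine this with $q_1^{a^{-1}nQ(x_0)}$ and complete the square: write $Q(x_0) - a(x_0,b_1) = Q(x_0 - ab_1) - a^2 Q(b_1)$, using $Q(x_0-ab_1) = Q(x_0) - a(x_0,b_1) + a^2 Q(b_1)$. Thus the exponent $a^{-1}n\big(Q(x_0) - a(x_0,b_1)\big)$ becomes $a^{-1}n\,Q(x_0-ab_1) - a\,n\,Q(b_1)$. The term $-anQ(b_1)$ pulls out of the inner sum as the overall factor $e(-anQ(b_1)\tau_1)$, which is part of the claimed output. Then I substitute $y_0 = x_0 - ab_1$; since $ab_1\in L_0 \subset \l_0 + L_0 - \l_0$, this is a bijection of the coset $\l_0+L_0$ onto itself, and one checks the divisibility condition $a\mid (m+Q(x_0)+a\l_{21})$ is equivalent to $a\mid(m+Q(y_0)+a\l_{21})$ because $Q(x_0) \equiv Q(y_0) \bmod a$ (indeed $Q(x_0) - Q(y_0) = a(y_0,b_1) + a^2 Q(b_1) \equiv 0 \bmod a$). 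After relabeling $y_0$ as $x_0$, the inner sum is restored to its original form except for the extra factor $e(-an(b_1,w_0))$ coming from $e(-n(x_0,w_0)) = e(-n(y_0+ab_1,w_0)) = e(-n(y_0,w_0))\,e(-an(b_1,w_0))$.

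Collecting the two surviving constants, $e(-anQ(b_1)\tau_1)$ and $e(-an(b_1,w_0))$, gives exactly the asserted factor $e(-anQ(b_1)\tau_1 - an(w_0,b_1))$ times $\Theta_{a,n}(F)(\tau_1,w_0)$, since both pull out of every term in the triple sum uniformly. I would organize the writeup as: (1) reduce to a single inner sum over $x_0$; (2) record the elementary facts $(x_0,b_2)\in\Z$ and $Q(x_0) \equiv Q(x_0-ab_1)\bmod a$; (3) perform the completion of square and the index shift $x_0 \mapsto x_0 - ab_1$; (4) conclude. The only mildly delicate point—hardly an obstacle—is checking that the shift $x_0\mapsto x_0 - ab_1$ genuinely preserves the divisibility constraint in the summation; everything else is bookkeeping with exponentials. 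One should also remark that the hypothesis "$L_0$ even integral" is used twice: for $(x_0,b_2)\in\Z$ and for $Q(b_1)\in\Z$ (the latter not strictly needed, but it keeps the exponent of $q_1$ in a clean form).
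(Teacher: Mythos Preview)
Your proposal is correct and follows essentially the same route as the paper: substitute $w_0+\L_b$, use $(x_0,b_2)\in\Z$ to drop the $b_2$-term, complete the square $Q(x_0)-a(x_0,b_1)=Q(x_0-ab_1)-a^2Q(b_1)$, and reindex $x_0\mapsto x_0-ab_1$. One small correction: when you justify $(x_0,b_2)\in\Z$ by saying ``both lie in $L_0$'', note that in fact $x_0\in\l_0+L_0\subset L_0^\vee$, not $L_0$; the integrality follows instead from $b_2\in L_0$ and $x_0\in L_0^\vee$ (this is exactly how the paper phrases it), and likewise for $(x_0,b_1)$.
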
 
\begin{proof}
Noting that $(\dot x, b_i) =(x_0,b_i)\in \Z$, since $x_0\in L_0^\vee$ and $b_i\in L_0$, we can write
\begin{multline*}
q_1^{a^{-1}nQ(x_0)}\,e(-n(x_0,w_0+\L_b)-n\L_2)\\
\nass
 = q_1^{a^{-1}n Q(x_0-ab_1) -a n Q(b_1)}\,e(-n(x_0 - ab_1,w_0)-n\L_2)\,e(-a n(b_1,w_0)).
\end{multline*} 
so that all summands scale in the same way.  
\end{proof}
We will omit the transformation law under $\SL_2(\Z)$ and simply note that the weight of $F$ is $1-\frac{n}2$ and that of the theta 
function associated to $L_0$ is $\frac{n}2-1$, so that $\Theta_{a,n}(F)$ is a generalized 
(weak) Jacobi form of weight $0$ and index $an$, cf. for example, \cite{gritsenko.AG}, \cite{clery.gritsenko}.

Finally, with the same notation, we can write 
\begin{multline*}
\Psi_0(\tau_1,w_0) = \eta(\tau_1)^{c_0(0)}\, \prod'_{\substack{\l_{21}, \l_{22}\in N^{-1}\Z/\Z}} 
\bigg(\ \frac{\vartheta_1(-\L_2,\tau_1)}{\eta(\tau_1)} \,  e(\frac12\,\L_2\,\l_{21})\ \bigg)^{c_\l(0)/2}\\
\nass
\nass
\times \prod_{\substack{x\in M^\vee \cap U^\perp\\  \snass \mod  M\cap U\\ \snass (x,W_0)>0}} 
\bigg(\ \frac{\vartheta_1(-(x,w),\tau_1)}{\eta(\tau_1)} \, e((x,w)-\frac12 (x_U,w))^{(x,e_1')}\ \bigg)^{c(-Q(x))(x)}
\end{multline*}
Here, in the first line, $\L_2 =\l_{21}\tau_1+\l_{22}$ and the prime indicates that $\l_{21}$ and $\l_{22}$ are not both zero. 

\section{A computation of the regularized integral}

\subsection{Passage to a mixed model} To obtain his product formulas, Borcherds computes that Fourier expansion of the regularized theta lift 
along the maximal parabolic which is the stabilizer of an isotropic line in $V$.  We compute, instead, the 
expansion with respect to the maximal parabolic $G_U$ stabilizing the isotropic $2$-plane $U$. 
To do this, we switch to a model of the Weil representation associated to a polarization arising from $U$.

Let $W = X+Y$, $\gs{}{}$, be the standard $2$ dimensional symplectic vector space with polarization.  
Choosing basis vectors $e_X$ for $X$ and $e_Y$ for $Y$
with $\gs{e_X}{e_Y}=1$,
we have 
$W(\Q)= \Q^2$ (row vectors) with the right action of $\Sp(W) = \SL_2(\Q)$. 
The symplectic vector space $V\tt W$, $(\ ,\ )\tt \gs{}{}$ has two polarizations 
$$V\tt W = V\tt X + V \tt Y = \big(\ V_0 \tt X + U' \tt W\, \big) + \big( \ V_0 \tt Y + U\tt W\, \big).$$
For the first of these, we have the standard Schr\"odinger model of the Weil representation on $S(V\tt X(\A)) = S(V(\A))$, the Schwartz space of $V(\A)$. 
For the second, we have a mixed model of the Weil representation on the space
$S( (V_0 \tt X + U' \tt W)(\A))$.
We change model of the Weil representation using a partial Fourier transform.  We write $\ph\in S(V(\A))$ as a function of $(x_0,x_1,x_2)$ 
where $x_0\in V_0\tt X(\A) = V_0(\A)$, $x_1\in U'\tt X(\A) = U'(\A) = \A^2$ and $x_2\in U\tt X(\A) = U(\A)= \A^2$, via our choice of bases. 
Then define
$$ S(V(\A)) \isoarrow S(V_0(\A)) \tt S( U'\tt W(\A)),\qquad \ph \mapsto \hat\ph,$$
$$\hat\ph(x_0,x_1,\eta_2) = \int_{\A^2} \ph(x_0,x_1,x_2)\,\psi(x_2\dop \eta_2)\,dx_2,$$
where we take $\psi$ to be the standard additive character of $\A/\Q$ that is trivial on $\hat\Z$ and restricts to $x\mapsto e(x)$ on $\R$. 
Here $\eta_2\in U'\tt Y(\A) = U'(\A) = \A^2$, and the natural pairing of $U\tt X$ and $U'\tt Y$, defined by the restriction of $(\ ,\ )\tt \gs{}{}$,  becomes the dot product
on $\A^2$. 
For an element $g' \in G'_\A$, we have
$$\widehat{\o(g')\ph}(x_0,\eta_1,\eta_2) = \o_0(g') \hat{\ph}(x_0,[\eta_1,\eta_2]g'),$$
where $\o_0$ is the Weil representation for $V_0$. 
Similarly, for an element of the Levi factor of $P_U$, we have 
$$\widehat{m(\a,h)\ph}(x_0,\eta) = \widehat{\ph}(h^{-1} x_0, {}^t \a\,\eta).$$
We will view the argument $\eta = [\eta_1,\eta_2]$ 
as an element of 
$$U'\tt W(\A) = \Hom(U,W)(\A) = M_2(\A).$$ Note that, under this transformation there is an identity of theta distributions
$$\sum_{x\in V(\Q)} \ph(x) = \Theta(\ph) =\hat\Theta(\hat \ph) = \sum_{x_0\in V_0(\Q), \eta\in M_2(\Q)} \hat\ph(x_0,\eta).$$

Since the regularized theta lift involves an integral over $\Gamma'\back \H$, we
decompose according to $\Gamma'$-orbits:
\begin{align*}
\theta(g',\ph) & = \sum_{\substack{x_0\in V_0(\Q) \\ \snass \eta\in M_2(\Q)}} \o_0(g')\hat{\ph}(x_0,\eta g')\\
\nass
{}&= \sum_{\eta /\sim} \ \sum_{\gamma\in \Gamma'_\eta\back \Gamma'} \theta_\eta(\gamma g',\ph),
\end{align*}
where 
$$\theta_\eta(g',\ph) = \sum_{x_0\in V_0(\Q)} \o_0(g')\hat{\ph}(x_0,\eta g').$$

A set of orbit representatives for $\SL_2(\Z)$ acting on $M_2(\Q)$ by right multiplication is given by:
\begin{align}
\label{etareps}
&0, \quad \bpm 0& a\\0&b\epm, \ \text{$a>0$, or $a=0$, $b>0$,  in $\Q$}, \\
\nass
\quad &\bpm a & b \\ 0 & \a\epm \ \text{$a$, $\a\in \Q^\times$, $a>0$, $b\in \Q\!\mod a\Z$.}\notag
\end{align}
As stabilizers, we have $\SL_2(\Z)$, $\{ \bpm 1& n\\ {}&1\epm \mid n\in \Z\}$,  and $1$ respectively, and we write $\Gamma'_\eta$ for their inverse images in 
$\Gamma'$.

Thus, we get a decomposition 
\be\label{theta.decompo-I}
\bgs{F(g'_\tau)}{\theta(g'_\tau,\ph_\infty)}=  
\sum_{\eta /\sim}\  \sum_{\gamma\in \Gamma'_\eta\back \Gamma'} 
\bgs{F(g'_{\gamma(\tau)})}{\theta_\eta(g'_{\gamma(\tau)},z)}.
\ee
Note that, for the terms with $\eta\ne0$, the contributions of $\gamma$ and $-\gamma$ are identical since $-1_2$ acts trivially on $\H$. 
This will result in a factor of $2$ for such terms when we unfold.

We will apply this identity to functions of the form $\ph_{\tau,z}\tt \ph$ for $\ph \in S(V(\A_f))$ and
\be\label{gaussian.def}
\ph_{\tau,z}(x)  = \o(g'_\tau)\ph_{\infty}(x,z) = v^{\frac{n+2}4} \,e(\tau Q(x))\, \exp(-2\pi v R(x,z)).
\ee

Now we return to the decomposition 
\be\label{theta.decompo-I}
\bgs{F(g'_\tau)}{\theta(g'_\tau,\ph_\infty(z))}=  
\sum_{\eta /\sim}\  \sum_{\gamma\in \Gamma'_\eta\back \Gamma'} 
\bgs{F(g'_{\gamma(\tau)})}{\theta_\eta(g'_{\gamma(\tau)},z)}.
\ee
and we break this into three blocks according to the rank of $\eta$:
\be\label{theta.decompo-II}
\bgs{F(g'_\tau)}{\theta(g'_\tau,\ph_\infty(z))}=  
\sum_{i=0}^2 \sum_{\substack{\eta /\sim\\ \text{rank}(\eta)=i}}\  \sum_{\gamma\in \Gamma'_\eta\back \Gamma'} 
\bgs{F(g'_{\gamma(\tau)})}{\theta_\eta(g'_{\gamma(\tau)},z)}.
\ee
Note that each block defines a $\Gamma'$-invariant function on $\H$. 
Moreover, for our choice of representatives, all $\eta$ of a given rank have the same stabilizer $\Gamma'_\eta$  in 
$\Gamma'$.  We obtain a corresponding decomposition of the regularized theta integral (\ref{reg-int})
$$\P(z;F) = \sum_{i=0}^2 \P_i(z;F),$$
where
$$\P_i(z;F) = \int_{\Gamma'\back \H}^{\text{reg}} \sum_{\substack{\eta /\sim\\ \text{rank}(\eta)=i}}\  \sum_{\gamma\in \Gamma'_\eta\back \Gamma'} 
\bgs{F(g'_{\gamma(\tau)})}{\theta_\eta(g'_{\gamma(\tau)},z)}\, v^{-2}\,du\,dv.
$$

The case $i=0$, where $\eta=0$,  was essentially already treated by Borcherds, \cite{borch98}, and we will review the result in
section~\ref{sec.zero.orbit}  below. 

For $i=1$ and $2$, we need to show that  
$$\phi_i(s,z) = \lim_{T\to \infty} \int_{\Cal F_T} \sum_{\substack{\eta /\sim\\ \text{rank}(\eta)=i}}\  \sum_{\gamma\in \Gamma'_\eta\back \Gamma'} 
\bgs{F(g'_{\gamma(\tau)})}{\theta_\eta(g'_{\gamma(\tau)},z)}\, v^{-s-2}\,du\,dv
$$
defines a holomorphic function of $s$ in a right half plane, to prove analytic continuation to a neighborhood of $s=0$,  and to compute the constant term there.

\subsection{Non-singular terms}
We will need to restrict $z$ to a certain open subset $D^o$ of $D$.  To describe it, we need to introduce some basic constants. 
It is a standard fact that  the Fourier coefficients of a weakly holomorphic modular form have sub-exponential growth, i.e.,
there is a positive 
constant $c_F$, depending on $F$, such that 
for large $m$, 
$$|c_\l(m)| = O(e^{2\pi c_F \sqrt{m}}),$$
for all $\l\in L^\vee/L$. 
The Fourier coefficients $c(m)$ of $F$ lie in $S(V(\A_f))$, and we write $\hat c(m)$ for their images under the 
partial Fourier transform
$$\hat c(m)(x_0,\eta) = \int_{\A_f^2} c(m)(x_0,\eta_1,x_2)\,\psi(x_2\cdot \eta_2)\,dx_2.$$
Let 
$B_a$ (resp. $B_\a$) be a lower bound for the set of $a$, $a>0$,  (resp. $|\a|$) occurring as components of 
a rank $2$ orbit representative $\eta$ for which 
$\hat c(m)(\cdot , \eta) \ne0$ for some $m$. Finally, let $B_m$ be an upper bound on the set of $m>0$ for which $c(-m)\ne 0$. 

By some tedious estimates, which we omit, we obtain the following. 
\begin{lem}\label{lem-tedious}  Suppose that $z$ lies in the region $D^o$ in $D$ where 
$$v_2  > \max\left( \frac{8 B_m}{B_a^2} v_1, \frac32 \frac{c_F^2}{B_\a^2}\, v_1^{-1}\right).$$
Then $\phi_2(s,z)$ defines an entire function of $s$. Moreover, its value at $s=0$ can be computed by unfolding and 
is given by  
$$\phi_2(0,z)= 2\sum_{\eta/\sim} \sum_m\sum_{x_0\in V_0(\Q)} \int_\H 
\big(\ \hat c(m)\cdot \widehat{\ph_{\tau,z}}\ \big)\big (x_0, \eta)\,q^m\,\,v^{-\ell/2-2}\,du\,dv.$$
\end{lem}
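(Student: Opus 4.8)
The plan is to establish two things about the rank-$2$ block of the theta decomposition: first, that the defining integral $\phi_2(s,z)$ converges and extends to an entire function of $s$ on the region $D^o$; second, that its value at $s=0$ is obtained by the naive unfolding. The starting point is the orbit sum in (\ref{theta.decompo-II}) for $i=2$. For a rank-$2$ representative $\eta = \bigl(\begin{smallmatrix} a & b \\ 0 & \alpha\end{smallmatrix}\bigr)$ the stabilizer $\Gamma'_\eta$ is trivial (up to $\pm 1$), so the inner sum over $\gamma \in \Gamma'_\eta\backslash\Gamma'$ is, up to the factor of $2$ coming from $\pm 1$ acting trivially on $\H$, a full sum over $\Gamma'$. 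I would then unfold $\int_{\Gamma'\backslash\H}^{\mathrm{reg}}$ against this sum: formally, $\int_{\Gamma'\backslash\H}\sum_{\gamma\in\Gamma'}(\cdots)(\gamma\tau)\,v^{-s-2}\,du\,dv = \int_{\H}(\cdots)(\tau)\,v^{-s-2}\,du\,dv$, replacing the fundamental domain (or its truncation $\mathcal F_T$) by all of $\H$. Because $\theta_\eta(g'_\tau,z) = \sum_{x_0\in V_0(\Q)}\omega_0(g'_\tau)\hat\varphi(x_0,\eta g'_\tau)$, pairing with the Fourier expansion $F(g'_\tau) = v^{-\ell/2}\sum_m c(m)q^m$ and using the transformation rule $\widehat{\omega(g')\varphi} = \omega_0(g')\hat\varphi(x_0,\eta g')$ turns the integrand into $\sum_m \sum_{x_0}(\hat c(m)\cdot\widehat{\varphi_{\tau,z}})(x_0,\eta)\,q^m\,v^{-\ell/2}$, which is exactly the claimed expression once integrated and summed over $\eta/\!\sim$.

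The substantive work is justifying this unfolding and the attendant analytic claims, and this is where the region $D^o$ enters. The key estimate is on $|(\hat c(m)\cdot\widehat{\varphi_{\tau,z}})(x_0,\eta)|$: one must bound the partial Fourier transform $\widehat{\varphi_{\tau,z}}$ of the Gaussian $\varphi_{\tau,z}(x) = v^{(n+2)/4}e(\tau Q(x))\exp(-2\pi v R(x,z))$ from (\ref{gaussian.def}), obtaining Gaussian decay in the $x_0$ and $\eta$ variables with rates controlled by $v$, $v_1$, $v_2$, together with a factor $e^{-2\pi v a \alpha / (\text{something})}$ or similar that forces absolute convergence of the sum over the rank-$2$ orbits $\eta$ and over $x_0\in V_0(\Q)$. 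Combining this with the sub-exponential growth $|c_\lambda(m)| = O(e^{2\pi c_F\sqrt m})$ of the Fourier coefficients, the sum over $m$ converges provided the Gaussian in $\tau$ kills the $e^{2\pi c_F\sqrt m}$ factor near the cusp — this is exactly the condition $v_2 > \tfrac32 c_F^2 B_\alpha^{-2} v_1^{-1}$ — while the negative-$m$ terms (the weakly holomorphic part) are controlled by $B_m$ and the condition $v_2 > 8 B_m B_a^{-2} v_1$. Under these two inequalities the double integral $\int_{\mathcal F_T}$ is dominated, uniformly in $T$ and locally uniformly in $s$ for all $s\in\C$, by an absolutely convergent integral over $\H$; hence the limit $T\to\infty$ exists, equals $\int_\H$, and defines an entire function of $s$. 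Setting $s=0$ then gives the stated formula, with the factor of $2$ accounting for $\pm\gamma$.

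The main obstacle I expect is the bookkeeping in the Gaussian estimate for $\widehat{\varphi_{\tau,z}}(x_0,\eta_1,\eta_2)$: one has to compute (or at least bound) the partial Fourier transform of $\exp(-\pi(x,x)_z)$ in the $x_2$-variable explicitly in the Siegel-domain coordinates $(\tau_1,\tau_2',w_0)$, disentangling how the majorant $(x,x)_z = (x,x) + 2R(x,z)$ couples $x_0$, $x_1$, $x_2$, and then tracking how $R(x,z)$ depends on $v_1$, $v_2$ and $w_0$. This is precisely the "tedious estimates, which we omit" that the lemma alludes to; the structure of the argument, though, is the standard Borcherds-style unfolding, and once the Gaussian bound is in hand the rest — interchanging sum and integral by dominated convergence, identifying the constant term, verifying holomorphy in $s$ — is formal. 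A secondary point to be careful about is that the regularization is defined via the constant term of the Laurent expansion at $s=0$ of $\phi(s,\xi)=\lim_T\int_{\mathcal F_T}$; since we will have shown $\phi_2(s,z)$ is entire, its "constant term at $s=0$" is just $\phi_2(0,z)$, so no genuine continuation beyond convergence is needed for this block, only for the $i=0$ block treated separately following Borcherds.
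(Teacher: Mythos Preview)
Your proposal is correct and is precisely the argument the paper has in mind: note that the paper gives no proof at all here, stating only ``By some tedious estimates, which we omit, we obtain the following,'' so there is nothing to compare against beyond the structure you have outlined. Your identification of the two ingredients --- the formal unfolding over the trivial stabilizer (with the factor of $2$ from $\pm\gamma$) and the Gaussian estimate on $\widehat{\varphi_{\tau,z}}$ combined with the sub-exponential bound on $|c_\lambda(m)|$ --- is exactly right, and the subsequent Lemmas in the paper in fact compute $\widehat{\varphi_{\tau,z}}(x_0,\eta)$ explicitly, which would let you carry out the omitted estimate in full.
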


Here note that 
$$\hat c(m)\cdot \widehat{\ph_{\tau,z}}  \in S(V_0(\A))\tt S(M_2(\A)).$$

The first step is to determine $\widehat{\ph_{\tau,z}}$. 
The majorant can be expressed as follows.
\begin{lem} (i) 
$$R(x,z) = 2|(w,\bar w)|^{-1}\,|(x,w)|^2.$$
(ii) 
$$(x,w) = \big( -{}^tx_1 (\tau_2 J +Q(v_0)) + (x_0,v_0) + {}^tx_2\, \big) \bpm \tau_1\\1\epm.$$
Here the expression in the first factor on the right side is a row vector. 
\hfill\break
(iii) 
$$|(x,w)|^2 = \big\vert \ {}^t(x_2 - B)\bpm \tau_1\\1\epm\ \big\vert^2,$$
where
$$B = (Q(v_0) -\tau_2 J)\, x_1 - (v_0,x_0).$$
\end{lem}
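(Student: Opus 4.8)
The plan is to compute the majorant $R(x,z)$ directly from its definition $R(x,z) = |(\pr_z(x),\pr_z(x))|$ and then to unwind the coordinates. First I would establish (i). Since $z$ is a negative $2$-plane with complex structure $j_z$, and $w = w(z)$ is the $+i$-eigenspace of $j_z$ in $z_{\C}$, the vectors $w$ and $\bar w$ span $z_{\C}$ and satisfy $(w,w) = 0$, $(w,\bar w) < 0$. For $x \in V(\R)$, write $\pr_z(x) = \alpha w + \bar\alpha \bar w$ (the coefficient of $\bar w$ being the conjugate of that of $w$ since $\pr_z(x)$ is real). Pairing against $w$ and $\bar w$ and using $(w,w)=0$ gives $\alpha = (x, \bar w)/(w,\bar w)$, hence $\alpha = \overline{(x,w)}/(w,\bar w)$. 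Then $(\pr_z(x),\pr_z(x)) = 2\alpha\bar\alpha (w,\bar w) = 2|(x,w)|^2/(w,\bar w)$, and taking absolute values (recall $(w,\bar w) < 0$) yields $R(x,z) = 2\,|(w,\bar w)|^{-1}\,|(x,w)|^2$, which is (i).

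For (ii), I would plug in the explicit coordinates. With $x$ written as in (\ref{x-coord}) and $z = i(\tau_1,\tau_2,v_0)$ so that $w = n(v_0,0)\, w(\tau_1,\tau_2)$ as in (\ref{gpactioncoord}), the pairing $(x,w)$ equals $(n(-v_0,0)x,\, w(\tau_1,\tau_2))$ by invariance of the form under $O(V)$. Using (\ref{eq2}) to compute $n(-v_0,0)x$ — whose three blocks are $x_2 + (v_0,x_0) + \text{(lower order)}$, $x_0 - v_0 x_1$, $x_1$ — and then pairing against $w(\tau_1,\tau_2) = \bpm -\tau_2 J\\ 0\\ 1_2\epm\bpm\tau_1\\1\epm$, the $V_0$-block drops out because $w(\tau_1,\tau_2)$ has zero $V_0$-component. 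Since $(x,y) = {}^tx_1 y_2 + {}^tx_2 y_1 + (x_0,y_0)$ in these block coordinates, the pairing collapses to the stated expression $\big(-{}^tx_1(\tau_2 J + Q(v_0)) + (x_0,v_0) + {}^tx_2\big)\bpm\tau_1\\1\epm$. This is a routine but bookkeeping-heavy matrix computation; I would be careful with the signs coming from $J$ and with which slot $Q(v_0)$ lands in.

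For (iii), I would first compute $(w,\bar w)$. From (\ref{w-coords}) or directly from $w(\tau_1,\tau_2)$ one gets $(w,\bar w) = -4 v_1 v_2$ (this is already recorded just after the definition of $w(\tau_1,\tau_2)$), so $|(w,\bar w)|^{-1} = (4v_1v_2)^{-1}$. Then I would rewrite the row vector in (ii): setting $B = (Q(v_0) - \tau_2 J)x_1 - (v_0,x_0)$, the first factor in (ii) is ${}^t(x_2 - B)$ — here one checks that $-{}^tx_1(\tau_2 J + Q(v_0))$ matches $-{}^tB$ up to the relation ${}^t(Jx_1) = -{}^tx_1 J$ and ${}^t(Q(v_0)x_1) = {}^tx_1 Q(v_0)$ since $Q(v_0)$ is symmetric — so that $(x,w) = {}^t(x_2-B)\bpm\tau_1\\1\epm$ and hence $|(x,w)|^2$ has the claimed form. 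The main obstacle, such as it is, is purely notational: correctly transposing the $2\times 2$ matrices and tracking the antisymmetry of $J$ against the symmetry of $Q(v_0)$, so that the compact expression with $B$ comes out with the right signs. There is no analytic difficulty; once the algebra is arranged the three parts follow immediately.
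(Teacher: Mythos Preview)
Your proof is correct. The paper states this lemma without proof, treating it as a routine computation, so there is nothing to compare against; your argument---computing $\pr_z(x)$ in the basis $w,\bar w$ for (i), using the $O(V)$-invariance $(x,n(v_0,0)w(\tau_1,\tau_2))=(n(-v_0,0)x,w(\tau_1,\tau_2))$ together with (\ref{eq2}) for (ii), and then transposing (using $J^t=-J$, $Q(v_0)^t=Q(v_0)$) to identify the row vector as ${}^t(x_2-B)$ for (iii)---is exactly the natural way to fill it in.
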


Using these expressions and a straightforward computation of the partial Fourier transform, we obtain the following. 

\begin{lem}  Write $\eta = [\eta_1,\eta_2]\in M_2(\R)$ and let $\eta_\tau = \eta\,\bpm \tau\\ 1\epm = \tau\eta_1+\eta_2$. 
Then  
$$\widehat{\ph_{\tau,z}}(x_0,\eta) = v^{\frac{n-2}4}\,v_2 \, e(Q(x_0) \tau)\,
e( B\dop \eta_\tau) \, \exp(-\pi v_2 v^{-1} v_1^{-1}  \, \big|\phantom{\bigg|}(1, -\tau_1)\,\eta_\tau\big|^2),$$
where $B = (Q(v_0) -\tau_2 J)\, \eta_1 - (v_0,x_0)$.
\end{lem}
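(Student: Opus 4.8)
The plan is to compute the partial Fourier transform of the Gaussian $\ph_{\tau,z}$ directly in the coordinates set up in section~\ref{siegel-3rd}, using the expression for the majorant $R(x,z)$ provided by the preceding lemma. Recall from (\ref{gaussian.def}) that
$$\ph_{\tau,z}(x) = v^{\frac{n+2}4}\,e(\tau Q(x))\,\exp(-2\pi v R(x,z)),$$
and the partial Fourier transform integrates out the $U$-component $x_2\in \R^2$ against $\psi(x_2\dop \eta_2) = e(x_2\dop \eta_2)$. The first step is to separate the $x_2$-dependence from the rest. Writing $Q(x) = Q(x_0) + x_1\dop x_2$ from the formula $(x,x) = (x_0,x_0)+2\,x_1\dop x_2$, the factor $e(\tau Q(x))$ contributes $e(\tau Q(x_0))\,e(\tau\, x_1\dop x_2)$, and by part (iii) of the majorant lemma the exponential factor is
$$\exp\bigl(-4\pi v\,|(w,\bar w)|^{-1}\,\bigl|\,{}^t(x_2-B)\tbinom{\tau_1}{1}\,\bigr|^2\bigr),$$
where $B = (Q(v_0)-\tau_2 J)x_1 - (v_0,x_0)$ does not involve $x_2$. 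Since $|(w,\bar w)| = 4v_1v_2$, the coefficient $4v|(w,\bar w)|^{-1} = v\,v_1^{-1}v_2^{-1}$.

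The second step is the Gaussian integral itself. Shifting $x_2 \mapsto x_2 + B$ absorbs $B$ into the linear character at the cost of the factor $e(\tau\, x_1\dop B)\,e(B\dop\eta_2)$; combined with the surviving $e(\tau\,x_1\dop x_2)\,e(x_2\dop\eta_2)$ one gets a linear character $e\bigl(x_2\dop(\tau x_1+\eta_2)\bigr)$ integrated against the two-dimensional Gaussian $\exp\bigl(-\pi\,v v_1^{-1}v_2^{-1}\,\bigl|\,{}^tx_2\tbinom{\tau_1}{1}\,\bigr|^2\bigr)$. One then makes the real-linear change of variables $x_2 \mapsto$ (real and imaginary parts of ${}^tx_2\tbinom{\tau_1}{1}$), whose Jacobian is $v_1$, diagonalizing the quadratic form, and applies the standard formula $\int_{\R^2} e^{-\pi t|u|^2 + 2\pi i\,\ell\dop u}\,du = t^{-1}\,e^{-\pi|\ell|^2/t}$. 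Bookkeeping the powers of $v$ (the prefactor $v^{\frac{n+2}4}$ times $v_1$ from the Jacobian times $(vv_1^{-1}v_2^{-1})^{-1} = v^{-1}v_1v_2$) produces $v^{\frac{n+2}4}\cdot v^{-1}v_1^2v_2$, and after regrouping against the natural $v^{\frac{n-2}4}$ normalization for the $V_0$-Weil representation one is left with the stated $v^{\frac{n-2}4}\,v_2$ up to keeping track of the $v_1$'s; careful tracking shows the $v_1$'s recombine correctly once the quadratic form in the exponential is rewritten. The character pieces assemble, after re-expressing $\tau x_1 + \eta_2$ and $B$ in terms of $\eta_1,\eta_2$ and $\eta_\tau = \tau\eta_1+\eta_2$, into $e(Q(x_0)\tau)\,e(B\dop\eta_\tau)$ with $B = (Q(v_0)-\tau_2 J)\eta_1 - (v_0,x_0)$ — here using that on the support of the transform $x_1 = \eta_1$ is forced — and the Gaussian exponent becomes $-\pi\,v_2 v^{-1} v_1^{-1}\,\bigl|(1,-\tau_1)\,\eta_\tau\bigr|^2$ after evaluating $\bigl|\,{}^tx_2\tbinom{\tau_1}{1}\,\bigr|$ for the shifted variable and recognizing $\bigl|\overline{\tbinom{\tau_1}{1}}\wedge\tbinom{\tau_1}{1}\bigr|$-type expressions.

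The main obstacle I expect is not conceptual but the precise bookkeeping of the powers of $v$, $v_1$, $v_2$, and especially verifying that the linear-character terms coming from (a) the original $e(\tau Q(x))$, (b) the shift of $x_2$ by $B$, and (c) the Fourier dual variable $\eta_2$ all combine exactly into $e(B\dop\eta_\tau)$ with the \emph{same} $B$ that appears in part (iii) of the majorant lemma — the appearance of $\eta_\tau = \tau\eta_1+\eta_2$ rather than $\eta_2$ alone is the delicate point, and it hinges on the identity $\tau x_1 \dop x_2 + x_2\dop\eta_2 = x_2 \dop(\tau\eta_1 + \eta_2)$ being valid precisely because the $x_1$-integration (done elsewhere, or the $x_1=\eta_1$ constraint) has already been invoked. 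Once the substitution ${}^tx_2\tbinom{\tau_1}{1} = u$ is made and one is doing a genuinely two-dimensional real Gaussian integral with a complex linear term, the rest is the cited ``straightforward computation.'' I would present only the change of variables, the Gaussian formula, and the final regrouping, relegating the power-of-$v$ arithmetic to the reader as the paper already does.
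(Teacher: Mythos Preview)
Your approach is exactly what the paper means by ``straightforward computation of the partial Fourier transform'': use the previous lemma to isolate the $x_2$-dependence, shift by $B$, and evaluate a two-dimensional Gaussian. Two points in your write-up need correction.

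First, the $v_1$-bookkeeping is off. Using the Gaussian formula directly with quadratic form $tA$, where $t=v\,v_1^{-1}v_2^{-1}$ and $A=\begin{pmatrix}|\tau_1|^2&u_1\\u_1&1\end{pmatrix}$ has $\det A=v_1^2$, the integral contributes the prefactor $(t^2\det A)^{-1/2}=t^{-1}v_1^{-1}=v^{-1}v_2$, so the total prefactor is $v^{\frac{n+2}4}\cdot v^{-1}v_2=v^{\frac{n-2}4}\,v_2$ with no stray powers of $v_1$. If instead you use the change of variables $u={}^tx_2\binom{\tau_1}{1}$, the Jacobian of the forward map is $v_1$, so $dx_2=v_1^{-1}\,du$ (you multiplied by $v_1$ rather than $v_1^{-1}$, which is where your spurious $v_1^2$ came from).

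Second, the phrase ``$x_1=\eta_1$ is forced on the support'' is misleading. The partial Fourier transform integrates only over $x_2$; the variable $x_1$ is untouched and is simply \emph{renamed} $\eta_1$ in the output $\hat\ph(x_0,\eta_1,\eta_2)$, consistent with viewing $\eta=[\eta_1,\eta_2]\in U'\otimes W$. There is no constraint being imposed. Once this relabeling is made, $\tau x_1+\eta_2=\tau\eta_1+\eta_2=\eta_\tau$ and the character $e((\tau x_1+\eta_2)\cdot B)$ from the shift is literally $e(B\cdot\eta_\tau)$.
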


\begin{lem} Suppose that 
\be\label{positivecond} 
m+ Q(x_0-a v_{01})  + a^2 v_1^{-1} v_2> 0 .
\ee
Then the value of the integral
$$
\int_{\H}  \widehat{\ph_{\tau,z}}\big (x_0, \bpm a&b\\{}&\a\epm)\,q^m\,\,v^{-s-\ell/2-2}\,du\,dv
$$
at $s=0$ 
is 
$$a^{-1}|\a|^{-1}e\big(\, \a  \,\big(\, a \tau_2' - (x_0, w_0) + a^{-1}(m+Q(x_0))\tau_1\big)\,\big) \,e( - a^{-1} b (m+Q(x_0))\,), $$
if $a\a>0$ and  
$$a^{-1}|\a|^{-1}e\big(\, \a  \,\big(\, a \bar\tau_2' - (x_0,\bar w_0) + a^{-1}(m+Q(x_0))\bar\tau_1\big)\,\big) \,e( - a^{-1} b (m+Q(x_0))\,),$$
if $a\a<0$.
\end{lem}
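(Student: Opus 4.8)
The plan is to substitute the explicit formula for $\widehat{\ph_{\tau,z}}$ from the preceding lemma into the integral and then evaluate by direct computation in the $(u,v)$-coordinates on $\H$. First I would write $\tau = u+iv$ and specialize $\eta_1 = \begin{pmatrix} a\\ 0\epm$, $\eta_2 = \begin{pmatrix} b\\ \a\epm$, so that $\eta_\tau = \tau\eta_1 + \eta_2 = \begin{pmatrix} a\tau + b\\ \a\epm$. The previous lemma gives $\widehat{\ph_{\tau,z}}(x_0,\eta)$ as $v^{(n-2)/4} v_2\, e(Q(x_0)\tau)\, e(B\cdot\eta_\tau)\,\exp(-\pi v_2 v^{-1} v_1^{-1}\,|(1,-\tau_1)\eta_\tau|^2)$, where $B = (Q(v_0)-\tau_2 J)\eta_1 - (v_0,x_0)$. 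With $\ell = 1-\frac{n}{2}$, the factor $v^{(n-2)/4}\cdot v^{-s-\ell/2-2} = v^{-s-9/4+n/4+\ell/2}$... I would just track powers of $v$ carefully so that the Gaussian in $v$ leaves a clean exponential-type integral; the key point is that after collecting the $e(Q(x_0)\tau)\,q^m = e((m+Q(x_0))\tau)$ and the terms linear in $\tau$ and $\bar\tau$ coming from $B\cdot\eta_\tau$, the $u$-integral becomes a standard Gaussian/Bessel-type integral over $\R$ and the $v$-integral over $(0,\infty)$ produces the stated closed form with no pole at $s=0$ (hence the value at $s=0$ is just the $s=0$ specialization).

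The second step is the bookkeeping of the linear-in-$\tau$ terms. I would compute $B\cdot\eta_\tau$ explicitly: with $\eta_1 = \begin{pmatrix} a\\0\epm$, we get $Q(v_0)\eta_1 = a\,Q(v_0)(\cdot,1)$-type expression and $\tau_2 J\eta_1 = \tau_2\begin{pmatrix} 0\\ -a\epm$, so $B = \begin{pmatrix} *\\ * \epm - (v_0,x_0)$, and pairing with $\eta_\tau = \begin{pmatrix} a\tau+b\\ \a\epm$ via the dot product yields a combination of $a\tau_2'$, $(x_0,w_0)$ and constants once one uses the definitions $\tau_2' = \tau_2 + \frac12(v_{01},w_0)$ and $w_0 = v_0\begin{pmatrix}\tau_1\\1\epm$. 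The $\tau$-dependent part should organize into $\exp(2\pi i (m+Q(x_0))\tau)$ times a phase linear in $\tau_2'$ and $w_0$; the appearance of $a^{-1}(m+Q(x_0))\tau_1$ in the answer comes from completing the square in the $u$-integral (the cross term between $(m+Q(x_0))\tau$ and the Gaussian centered at $-\tau_1$ in the exponent $|(1,-\tau_1)\eta_\tau|^2 = |a\tau+b - \a\tau_1|^2$). The sign condition $a\a \gtrless 0$ determines whether the resulting holomorphic or antiholomorphic combination survives, exactly as in the classical computation of such integrals (cf. the Fourier expansion step in Borcherds \cite{borch98}, section 7).

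The third step is to verify the positivity condition (\ref{positivecond}): after completing the square, the $v$-integral is $\int_0^\infty v^{\text{const}-s}\exp(-\pi v\, \kappa)\,dv$ for a constant $\kappa$ which works out to $2(m + Q(x_0 - a v_{01}) + a^2 v_1^{-1} v_2)$ (using $Q(w_0 - \bar w_0) = -4v_1^2 Q(v_{01})$ and (\ref{siegel3})); convergence of this integral at $v\to\infty$ is precisely the hypothesis, and then the integral is an explicit Gamma-factor which, combined with the $v^{v_2}$ prefactor and the normalization, gives the clean coefficient $a^{-1}|\a|^{-1}$ with no $\Gamma$-function surviving at $s=0$ — this is the ``over-regularization'' phenomenon. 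The main obstacle I expect is the second step: carefully reconciling the group-action coordinates $(\tau_1,\tau_2,v_0)$ with the holomorphic coordinates $(\tau_1,\tau_2',w_0)$ inside the phase $e(B\cdot\eta_\tau)$ so that the final answer comes out in terms of $\tau_2'$, $w_0$ rather than $\tau_2$, $v_0$ — this is exactly the shift (\ref{deftau2prime}) the author warns about, and getting every factor of $\frac12$ and every $J$ right is where errors creep in. Once that phase is pinned down, the two Gaussian integrations are routine.
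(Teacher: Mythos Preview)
Your overall plan --- substitute the explicit formula for $\widehat{\ph_{\tau,z}}$, integrate first over $u\in\R$ as a Gaussian, then over $v\in(0,\infty)$, and finally rewrite in holomorphic coordinates --- is exactly the route the paper takes. The first two steps of your outline are fine and match the paper's computation.

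The gap is in your third step. After the Gaussian $u$-integral, the remaining $v$-integrand is \emph{not} of the form $v^{\text{const}-s}\exp(-\pi v\kappa)$. The factor $\exp(-\pi v_2 v^{-1}v_1^{-1}|a\tau+b-\a\tau_1|^2)$ contributes, via $|a\tau+b-\a\tau_1|^2 = (au+b-\a u_1)^2 + (av-\a v_1)^2$, a term $-\pi v_2 v_1^{-1}\,\a^2 v_1^2\, v^{-1} = -\pi v_1 v_2 \a^2\, v^{-1}$ in the exponent which survives the $u$-integration. So the $v$-integral is of the shape
\[
\int_0^\infty v^{-s-\frac32}\,\exp\bigl(-A\,v - B\,v^{-1}\bigr)\,dv
\ =\ 2\,(A/B)^{\frac{s}{2}+\frac14}\,K_{-s-\frac12}\bigl(2\sqrt{AB}\bigr),
\]
a Bessel integral, not a Gamma integral. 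At $s=0$ one then uses $K_{-1/2}(2\pi r)=\tfrac12 r^{-1/2}e^{-2\pi r}$ to get the elementary answer.

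Relatedly, the positivity hypothesis (\ref{positivecond}) is not a convergence condition for the $v$-integral (that integral converges for any real $s$ since $A,B>0$). Its role is different: the Bessel argument involves $|\,\bold C + a^2 v_1^{-1}v_2\,|$ with $\bold C = m+Q(x_0-av_{01})$, and (\ref{positivecond}) lets you drop the absolute value. Only then does the exponential $e^{-2\pi r}$ from $K_{-1/2}$ combine with the factor $e^{2\pi a\a v_2}$ already present to produce either a holomorphic expression in $(\tau_1,\tau_2',w_0)$ (when $a\a>0$) or the conjugate (when $a\a<0$). If you try to push through with a Gamma integral you will not see this dichotomy emerge, nor will you get the clean $a^{-1}|\a|^{-1}$ prefactor in the right way.
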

\begin{proof}
In the integrand here 
$$B\dop \eta_\tau= (\,a\, Q(v_{01})-(v_{01},x_0)\ )\,(a\tau+b) + (\, a\,\frac12 (v_{02},v_{01}) +a\,\tau_2 - (v_{02},x_0)\ ) \,\a,$$
so that 
$$\widehat{\ph_{\tau,z}}(x_0, \eta)\,q^m\,\,v^{-s-\ell/2-2}
= v^{-s-2}\,v_2 \, e(\bold C \tau + \bold B \a+ \bold B')\,
\exp\bigg( - \pi v^{-1}{ {v_1}^{-1}} \,v_2\, |a\tau+b-\a\tau_1|^2 \ \bigg),$$
where, for simplicity, we let
\begin{align*}
\bold C& = m+ Q(x_0-a v_{01}),\\ 
\bold B& = a\,\frac12 (v_{02},v_{01}) +a\,\tau_2 - (v_{02},x_0),\qquad \text{and}\quad \bold B' = (\,a\, Q(v_{01})-(v_{01},x_0)\, )\,b.
\end{align*}

We first compute the integral over $\R$ with respect to $u$ to obtain
\begin{multline*}
 v^{-s-2}\,v_2 \, e(\bold B \a+\bold B')\, \exp( -2\pi \bold C v -\pi v^{-1}v_1^{-1} v_2 (av-\a v_1)^2)\\
\nass
{}\times e(\bold C a^{-1}(\a u_1-b))\,(v^{-1} v_1^{-1} v_2)^{-\frac12}\,a^{-1} \exp(-\pi (v^{-1} v_1^{-1} v_2)^{-1} \,a^{-2}\bold C^2).
\end{multline*}

Next we have to compute the integral over $(0,\infty)$ with respect to $v$. First we pull out the 
factor
$$a^{-1}\, v_2 \,( v_1^{-1} v_2)^{-\frac12}\, e(\bold B \a+\bold B')\, \exp( 2\pi a\a v_2)\, e(\bold Ca^{-1}( \a u_1-b)),$$
which has no dependence on $v$. The integral of the remaining factor is 
\begin{align*}
&\int_0^\infty v^{-s-\frac32}\, \exp(-\pi v v_1 v_2^{-1}\,(\, a^{-1}\,\bold C + a\,v_1^{-1} v_2\,)^2  - \pi v^{-1} v_1v_2\a^2) \,dv\\
\nass
{}&= 2 \left(\frac{|\bold C+a^2\,v_1^{-1} v_2|}{v_2 |a||\a|}\right)^{s+\frac12} K_{-s-\frac12}(2\pi v_1|\a| |a|^{-1}\,|\bold C+a^2\,v_1^{-1} v_2|),
\end{align*}
using the formula
$$\int_0^\infty v^{\nu-1} \, \exp( -a v -b v^{-1}) \,dv = 2 \left(\frac{a}{b}\right)^{-\frac{\nu}2}K_\nu(2\sqrt{ab}).$$

Collecting terms, we have
\begin{multline*}
a^{-1}\,(v_1v_2)^{\frac12} \, e(\bold B \a +\bold B'-i a\,\a v_2 + \bold C a^{-1}( \a u_1-b))\,\\
\nass
\times 2 \left(\frac{|\bold C+a^2\,v_1^{-1} v_2|}{v_2|a| |\a|}\right)^{s+\frac12} K_{-s-\frac12}(2\pi v_1|\a| |a|^{-1}\,|\bold C+a^2\,v_1^{-1} v_2|).
\end{multline*}
Next setting $s=0$,  and recalling that 
$$K_{-\frac12}(2\pi r) = \frac12\,r^{-\frac12} e^{-2\pi r},$$
and simplifying, we have
\be
a^{-1}\,|\a|^{-1}\,e\big(\bold B \a+ \bold B'-i a\,\a v_2 + \bold C \,a^{-1}(\a u_1-b)+i v_1|\a| |a|^{-1}\,|\bold C+a^2\,v_1^{-1} v_2|\big).
\ee

Suppose that 
\be\label{Cpositive}
\bold C+ a^2 v_1^{-1} v_2  = m+Q(x_0-av_{01}) + a^2v_1^{-1} v_2 >0.
\ee
Then we have 
\begin{multline*}
a^{-1}|\a|^{-1}\,e(\bold B \a+\bold B'-i a\,\a v_2 + \bold C \,a^{-1}(\a u_1-b)+i v_1|a|^{-1}|\a| \bold C+i |a||\a| v_2)\\
\nass
{} = \begin{cases}a^{-1} |\a|^{-1}\,e(\bold B \a +\bold B'+ \bold Ca^{-1}( \a \tau_1-b))
&\text{if $a\a>0$,}\\ 
\nass
a^{-1}|\a|^{-1}\,e(\bold B \a+\bold B'-2i a\a v_2 + \bold C a^{-1}(\a \bar\tau_1-b))&\text{if $a\a<0$.}
\end{cases}
\end{multline*}
Rewriting in terms of holomorphic coordinates, we obtain the claimed expressions. \end{proof}

We now suppose that the lattice $L$ is chosen as in section~\ref{subsection-lattice} above. Coset representatives $\l\in L^\vee/L$ 
then have the form $\l = \l_0+ \l_1+\l_2$ with $\l_0\in L_0^\vee$, $\l_1\in (\Z/N\Z)^2$ and $\l_2\in (N^{-1}\Z/\Z)^2$. 
Then 
\be\label{hatcm}
\hat c(m) = \sum_\l c_\l(m) \,\hat{\ph}_\l
\ee
and an easy computation shows that 
\be\label{hatph}
\hat \ph_\l(x_0,\eta_1,\eta_2) = e(-\l_2\dop \eta_2)\, \ph_{\l_0}(x_0)\,\ph_{\l_1}(\eta_1) \,\ph_{\hat \Z^2}(\eta_2).
\ee
For our orbit representative $\eta$, this will vanish unless $\a\in \Z$, $a\in \l_{11}+N\Z$, $\l_{12}=0$,  and $b\in \Z$, in  which case it has the value
$$e(-\l_{21} b- \l_{22}\a)\,\ph_{\l_0}(x_0).$$ 
For fixed $\eta$ with  $\a>0$, we have
\begin{multline}
\sum_m  \sum_{x_0\in V_0(\Q)}\int_{\H} \big(\ \hat c(m)\cdot \widehat{\ph_{\tau,z}}\ \big)\big (x_0, \eta)\,q^m\,\,v^{-s-\ell/2-2}\,du\,dv\,\vert_{s=0}\\
\nass
{}=\sum_{\substack{\l\\ \snass \l_{12}=0}} \sum_m  c_\l(m)\sum_{x_0\in V_0(\Q)} a^{-1}|\a|^{-1}e\big(\, \a  \,\big(\, a \tau_2' - (x_0, w_0) + a^{-1}(m+Q(x_0))\tau_1\big)\,\big) \,\\
\nass
{}\times e( - a^{-1} b (m+Q(x_0))\,)\,e(-\l_{21} b- \l_{22}\a)\,\ph_{\l_0}(x_0)\,\ph_{\l_{11}}(a)\,\ph_{\Z}(\a).
\end{multline}
Now the transformation properties of $F$ imply that $m+Q(x_0) +\l_{21}a \in \Z$, 
for the terms occurring in this sum\footnote{i.e., for $\bpm a\\0\epm \in \l_1+N\Z^2$ and $x_0\in \l_0+L_0$, we have
$c_\l(m) \ne 0$ implies that $m+Q(\l_0) + \l_1\dop \l_2 \in \Z$.}. 
Taking the sum on $b$ modulo $a\,\Z$, we obtain 
\begin{multline*}
\sum_{\substack{\l\\ \snass \l_{12}=0}} \sum_m  c_\l(m)\sum_{\substack{x_0\in V_0(\Q)\\ \snass
a\mid (m+Q(x_0)+a\l_{21})}} |\a|^{-1}e\big(\, \a  \,\big(\, a \tau_2' - (x_0, w_0) + a^{-1}(m+Q(x_0))\tau_1\big)-\a \l_{22}\,\big) \,\\
\nass
{}\times \ph_{\l_0}(x_0)\,\ph_{\l_{11}}(a)\,\ph_{\Z}(\a).
\end{multline*}
The analogous contribution for $\a<0$ is the same except that $\tau_2'$ and $\tau_1$ are replaced by $\bar \tau_2'$ and $\bar \tau_1$ respectively. 
Now the sum on $\a>0$ yields
\begin{multline}\label{alpha-pos}
-\sum_{\substack{\l\\ \snass \l_{12}=0}} \sum_m  c_\l(m)\sum_{\substack{x_0\in V_0(\Q)\\ \snass
a\mid (m+Q(x_0)+a\l_{21})}} \log(1-e\big( \,\big(\, a \tau_2' - (x_0, w_0) + a^{-1}(m+Q(x_0))\tau_1\big)-\l_{22}\,\big) \,)\\
\nass
{}\times \ph_{\l_0}(x_0)\,\ph_{\l_{11}}(a),
\end{multline}
while the analogous sum for $\a<0$ yields its complex conjugate. 

Thus, the whole contribution will be
\begin{multline}\label{alpha-whole}
-2\sum_{\substack{\l\\ \snass \l_{12}=0}} \sum_m  c_\l(m)\sum_{a}\sum_{\substack{x_0\in V_0(\Q)\\ \snass
a\mid (m+Q(x_0)+a\l_{21})}} \log|1-e\big( \,\big(\, a \tau_2' - (x_0, w_0) + a^{-1}(m+Q(x_0))\tau_1\big)-\l_{22}\,\big) \,|^2\\
\nass
{}\times \ph_{\l_0}(x_0)\,\ph_{\l_{11}}(a).
\end{multline}
Note that, as remarked before,  the factor of $2$ arises since, in the unfolding, $\gamma$ and $-\gamma$ make the same contribution. 

Now (\ref{alpha-whole}) is $-2\log|\ |^2$ of the product  
\be\label{ns-prod}
\prod_{\substack{\l\\ \snass \l_{12}=0}}\prod_m \bigg( \prod_{\substack{a\in \l_{11}+N\Z\\ \snass a>0}}
\prod_{\substack{x_0\in \l_0+L_0\\ \snass
a\mid (m+Q(x_0)+a\l_{21})}} 
\big(1-e(a \tau_2' - (x_0, w_0) + a^{-1}(m+Q(x_0))\tau_1-\l_{22}\big)\bigg)^{c_\l(m)}.
\ee
It is easy to check that no factor in this product can vanish in the region
$$v_2 = v_2' -  Q(v_{01})\, v_1  >  B_m\, v_1.$$
It is also not difficult to check the absolute (uniform) convergence of this product in a region of the
form  
$$ v_2 >  B_m\, v_1 + c_F^2 v_1^{-1}.$$

It is interesting to remark that, in this calculation the conjugate pair of factors arise naturally for each $x_0$ and there is no choice of Weyl chamber
involved. This is consistent with the fact that the expansion we are computing is associated to a $1$-dimensional boundary component where 
no choice of rational polyhedral cone is being made. In contrast, the formulas of Borcherds associated to a $0$-dimensional boundary component involve 
a choice of Weyl chamber. 

It remains to compute the terms for the other two types of orbits.

\subsection{Rank $1$ terms}   Suppose that  $\eta= [0,\eta_2]$ for $\eta_2\in \Q^2$ nonzero. 

Here we have to compute a regularization of 
$$\sum_{\substack{ \eta=[0,\eta_2]}}  \int_{\Gamma'_\infty\back\H} \big(F(g'_\tau),\theta_\eta(g'_\tau,\ph_\infty)\big)\,v^{-2}\,du\,dv.$$
This comes to taking the constant term at $s=0$ of the sum on $\eta_2$ of the integrals
$$  \sum_m  \sum_{x_0\in V_0(\Q)}\int_{\Gamma'_\infty\back \H} \hat c(m)\cdot \widehat{\ph_{\tau,z}}\big (x_0, 0,\eta_2)\,q^m\,\,v^{-s-\ell/2-2}\,du\,dv.$$

Now in the integrand
$$B\dop \eta_\tau = -(v_{01},x_0)\,a  - (v_{02},x_0) \,b$$
is independent of $\tau$, 
and we have
$$\widehat{\ph_{\tau,z}}(x_0,\eta) = v^{\frac{n-2}4}\,v_2 \, e(Q(x_0) \tau)\,
e( B\dop \eta_\tau) \, \exp(-\pi v^{-1} v_1^{-1} v_2 \, |b\tau_1-a|^2).$$
Thus, 
\begin{align*}
&\sum_m  \sum_{x_0\in V_0(\Q)}\int_{\Gamma'_\infty\back \H}\hat c(m)(x_0,0,\eta_2)\, v^{\frac{n-2}4}\,v_2 \, e(Q(x_0) \tau)\\
\nass
{}&\qquad\qquad\qquad \times e( B\dop \eta_\tau) \, \exp(-\pi v^{-1} v_1^{-1} v_2 \, |b\tau_1-a|^2)\,e(m\tau)\,\,v^{-s-\ell/2-2}\,du\,dv\\
\nass
\nass
{}&=\Gamma(s+1)\,(\pi  v_1^{-1} v_2)^{-s-1}\,v_2\sum_m  \sum_{\substack{x_0\in V_0(\Q)\\ \snass Q(x_0)=m}}\hat c(-m)(x_0,0,\eta_2)\,e( B\dop \eta_\tau) \, \, |b\tau_1-a|^{-2s-2}
.
\end{align*}

The sum here is finite, since only a finite number of $\hat c(-m)$ for $m\ge 0$ are nonzero and the corresponding set of $x_0$'s is also finite. 

We must still sum on $\eta$.   Again we take $L$ to be the lattice defined in section~\ref{subsection-lattice}. Then by (\ref{hatcm}) and (\ref{hatph}), 
$$\hat c(m)(x_0,0,\eta_2) = \sum_{\substack{\l\in L^\vee/L \\ \snass \l_1=0}} c_\l(-m)\,e(-\l_2\dop \eta_2)\,\ph_{\l_0}(x_0)\,\ph_{\hat\Z^2}(\eta_2),$$
so that $\eta_2$ will run over non-zero elements of $\Z^2$.  Note that, by taking the sum in this way, we are implicitly including the factor of $2$ 
coming from the identical contributions of $\gamma$ and $-\gamma$ in the unfolding. 
For fixed $m$, $\l$  and $x_0$, we must compute
the constant term at $s=0$ of
\be \label{basic-II}
\Gamma(s+1)(\pi  v_1^{-1} v_2)^{-s-1}\,v_2\,\sum_{a, b}^{\prime} e( C_0 a + C_1 b) \, \, |b\tau_1-a|^{-2s-2}
\ee
where 
$$C_0= -(v_{01},x_0)-\l_{21}, \qquad C_1 = - (v_{02},x_0)-\l_{22}.$$

First suppose that  $C_0$ and $C_1$ are not both zero. Note that, if $m\ne0$ so that $x_0\ne 0$, this will 
generically be the case.   We can apply the second Kronecker limit formula, Siegel \cite{siegel}, (39), p.32, 
$$\frac{z-\bar z}{-2\pi i} \sum'_{m,n} \frac{e^{2\pi i(mu+nv)}}{|m+nz|^2} = \log\bigg\vert \frac{\vartheta_1(v-uz,z)}{\eta(z)}\,e^{\pi i z u^2}\bigg\vert^2.$$

Setting $s=0$ in (\ref{basic-II}), we have
$$
\pi^{-1}  v_1 \,\sum_{a, b}^{\prime} e( C_0 a + C_1 b) \, \, |b\tau_1-a|^{-2}
= 
-\log\bigg\vert \frac{\vartheta_1(C_1+ C_0\tau_1,\tau_1)}{\eta(\tau_1)}\,e^{\pi i \tau_1 C_0^2}\bigg\vert^2.
$$
Here note that 
$$C_1+C_0\tau_1 = -(x_0,w_0) -\L_2,  \qquad{\text{where}}\quad  \L_2 = \l_2\dop \bpm \tau_1\\1\epm = \l_{21}\tau_1+\l_{22}.$$

The full contribution of these terms is then 
\be\label{rank1-generic-1}
-\sum_m \sum_{\substack{\l \\ \snass \l_1=0}}
c_\l(-m) \sum_{\substack{x_0\in \l_0+L_0\\ \snass Q(x_0)=m}} \log\bigg\vert \frac{\vartheta_1(-(x_0,w_0)-\L_2,\tau_1)}{\eta(\tau_1)}\,e^{\pi i \tau_1 C_0^2}\bigg\vert^2.
\ee

Recall that the theta series 
\be\label{jacobi-I}
\vartheta_1(z,\tau) = \sum_{n\in \Z}  e^{i\pi(n+\frac12)^2\tau + 2\pi i(n+\frac12)(z-\frac12)},
\ee
has a product expansion, \cite{siegel}, (36), p30, 
\be\label{jacobi-II}
\vartheta_1(z,\tau) = -i e^{i\pi (\tau/4)} (e^{i\pi z} - e^{- i \pi z}) \prod_{n=1}^\infty (1-e^{2\pi i (z+n\tau)})\,(1-e^{-2\pi i (z-n\tau)})(1-e^{2\pi i n \tau}).
\ee

We may then write the contribution of these rank $1$-orbits as $-\log|\ |^2$ of the following product
$$\prod_m\prod_{\substack{\l \\ \snass \l_1=0}}
\bigg(\ \prod_{\substack{x_0\in \l_0+L_0\\ \snass Q(x_0)=m}}  \frac{\vartheta_1(-(x_0,w_0)-\L_2,\tau_1)}{\eta(\tau_1)}\,e^{\pi i \tau_1 C_0^2}\ \bigg)^{c_\l(-m)},
$$
or in a fully expanded version which will be useful in section~\ref{section-compare}
\begin{multline}\label{factor-II-expanded}
\prod_m  \prod_{\substack{\l \\ \snass \l_1=0}}\bigg(\  \prod_{\substack{x_0\in \l_0+ L_0\\ \snass Q(x_0)=m}} \,e^{\pi i \tau_1 C_0^2}\,
(e(\frac12((x_0,w_0)+\L_2)) - e(-\frac12((x_0,w_0)+\L_2))\,\\
\nass
\times q_1^{\frac{1}{12}} \,
\prod_{n=1}^\infty \big(1- e(-(x_0,w_0)-\L_2)\,q_1^n\,\big)\,\big(1-e((x_0,w_0)+\L_2)\,q_1^n\,\big)  \  \bigg)^{c_\l(-m)}.
\end{multline}
Here we will want to extract the factor
\be 
\prod_m  \prod_{\substack{\l \\ \snass \l_1=0}}\bigg(\  \prod_{\substack{x_0\in \l_0+L_0\\ \snass Q(x_0)=m}} \,e^{\pi i \tau_1 C_0^2\,}\,\bigg)^{c_\l(-m)}
\ee
whose $-\log|\ |^2$ is 
\be\label{extra-II}
2\pi v_1\sum_{m}\sum_{\substack{\l \\ \snass \l_1=0}}\sum_{\substack{x_0\in \l_0+L_0\\ \snass Q(x_0)=m}} c_\l(-m)\, ((x_0, v_{01})+\l_{21})^2.
\ee

Next suppose that $C_0 = C_1=0$. This will always occur when $\l=0$ and $m=0$, so that $x_0=0$.  It can also occur when $m\ne 0$ 
and $(w_0,\tau_1)$ lies on certain affine hyperplanes. In this case (\ref{basic-II}) reduces to the Eisenstein series, and we have
\begin{align} \label{basic-II-0}
&\Gamma(s+1)\pi^{-s-1}\,v_2^{-s}\, v_1^{s+1}\,\sum_{a, b}^{\prime}  |b\tau_1-a|^{-2s-2}\\
\nass
{}&= \Gamma(s+1)\,\pi^{-s-1}\, v_2^{-s} \bigg(\ \frac{\pi}{s} + 2 \pi (\ \gamma-\log 2 - \log(v_1^{\frac12}|\eta(\tau_1)|^2) + O(s)\ \bigg),\notag
\end{align}
by the first Kronecker limit formula, \cite{siegel}, p.14.   This has a pole with residue $1$ at $s=0$ and the constant term there is 
\be\label{rank1-sing}
\gamma-\log(4\pi v_2) - 2 \log(v_1^{\frac12}|\eta(\tau_1)|^2).
\ee
Thus, in the generic case, i.e., when $(w_0,\tau_1)$ is not on any singular hyperplane, we obtain an additional contribution:
\be\label{weight.term}
-c_0(0)\,\big(\, \log(4\pi v_1v_2) -\gamma + 2 \log|\eta(\tau_1)|^2\, \big).
\ee
Note that the quantity $-c_0(0)\,(\,\log(4\pi v_1v_2) -\gamma)$ is part of the normalized Petersson inner product in (\ref{borch.form}). 

\subsection{The zero orbit}\label{sec.zero.orbit}    Finally, we have the term for $\eta=0$. 
In this case, 
$$\widehat{\ph_{\tau,z}}(x_0,0) = v^{\frac{n-2}4}\,v_2 \, e(Q(x_0) \tau).$$
and we need to compute
\be\label{type-I.contrib}
  \int_{\Gamma'\back\H}^{\text{reg}} \bgs{F(g'_\tau)}{\theta_0(g'_\tau,\ph_\infty)}\,v^{-2}\,du\,dv.
  \ee
This integral is essentially the Rankin product of $F$ with a positive definite theta series attached to $V_0$. 
More precisely, write
\be\label{full-trans-F}
F^o(\tau) = \sum_m \hat c(m)(\cdot, 0)\, q^m = \sum_m \sum_{\l} c_\l(m)\,q^m\, \hat \ph_\l(\cdot, 0),
\ee
so that $F^o: \H \lra S_{L_0}\subset S(V_0(\A_f))$ is a weakly holomorphic form of weight $-\ell = 1-\frac{n}2$. 
Also note that only terms with $\l_1=0$ contribute to this sum, and that, for such a $\l$, 
$$\hat\ph_\l(x_0,0) = \ph_{\l_0}(x_0).$$
Thus, 
$$F^o(\tau) = \sum_{\l_0} F^o_{\l_0}(\tau)\, \ph_{\l_0}$$
where
$$F^o_{\l_0}(\tau) = \sum_m \sum_{\l_2} c_{\l_0+\l_2}(m)\, q^m.$$
For $\l_0\in L_0^\vee/L_0$, we have a theta series of weight $\ell$
$$\theta(\tau, \ph_{\l_0}) = \sum_{x_0\in\l_0+ L_0} e(Q(x_0)\tau).$$ 
By Corollary~9.3 of \cite{borch95}, (\ref{type-I.contrib}) is equal to
\vskip -14pt
$$
\frac{\pi}3\,v_2\,\CT{}[ \,E_2(\tau)\,\sum_{\substack{\l_0}} F^o_{\l_0}(\tau)\,\theta(\tau, \ph_{\l_0})\,] = - 8 \pi\, v_2\,\sum_{m} \sum_{\substack{\l \\ \snass \l_1=0}}
\sum_{x_0\in \l_0+L_0} c_\l(-m)\,\s_1(m-Q(x_0)),
$$
where $\CT{}$ means the constant term in the $q$-expansion and
$$E_2(\tau) = 1- 24\sum_{m=1}^\infty \s_1(m)\, q^m.$$
We set $\s_1(0) = -\frac1{24}$ and $\s_1(r)=0$ for $r\notin \Z_{\ge0}$.   In particular, only terms with $m\ge 0$ occur and the sum is finite. 
For convenience, we write
\be\label{main-I-alt} 
I_0 := -  \sum_{m} \sum_{\substack{\l \\ \snass \l_1=0}}
\sum_{x_0\in \l_0+L_0} c_\l(-m)\,\s_1(m-Q(x_0))  = \CT{}[ \,E_2(\tau)\,\sum_{\substack{\l_0}} F^o_{\l_0}(\tau)\,\theta(\tau, \ph_{\l_0})/24\,] 
\ee
so that the contribution from $\eta=0$ is simply  $8\pi v_2\,I_0$.    Note that $24\,I_0$ is an integer, since the Fourier coefficients $c_\l(-m)$ for $m\ge 0$ of the original 
input form $F$ are required to be integers.

\subsection{Borcherds' vector system identity}\label{borch.quad}
 At this point, to obtain our final formula, we need to combine the contribution (\ref{type-I.contrib}), in the form just given,   
with the quantity (\ref{extra-II}),  using a version of Borcherds'  vector system 
identity, 
\cite{borch98}, p.536, Theorem~10.5.
In order to describe this identity in our present case, we consider another partial Fourier transform map. Let
$$V_{00} = \Q e_1 + V_0 + \Q e_1'$$
so that $V_{00}$ has signature $(n-1,1)$ and we have a Witt decomposition
$$V = \Q e_2 + V_{00} + \Q e_2'.$$
Define a map
\be\label{partialFT-II} 
S(V(\A_f)) \lra S(V_{00}(\A_f)), \qquad \ph \mapsto \hat\ph^{oo},
\ee
where
$$\hat \ph^{oo}(x_{00}) = \int_{\A_f} \ph(x_{00}+ y\,e_2)\, dy.$$

Let 
$$L_{00} = \Z\, e_1+ L_0 +  N\Z e_1'$$
so that $L_{00}$ has signature $(n-1,1)$ and 
$$L =  \Z \,e_2 +L_{00}+ N\Z \,e_2'.$$
Also, by analogy with (\ref{hatph}), for $\l= \l_0+\l_1+\l_2$, we have
$$\hat \ph_\l^{oo}(x_0+ a e_1 + a' e_1') = \ph_{\l_0}(x_0)\ph_{\l_{11}}(a')\ph_{\l_{21}}(a)\, \ph_{\l_{12}}(0).$$
Let  $\l_{00} = \l_0 + \l_{21}e_1 + \l_{11}e_1'$ and set 
$$c_{\l_{00}}(m) = \sum_{\substack{\l= \l_{00} + \l_{22} e_2}} c_\l(m).$$
Then the image of $c(m) \in S_L$ under the partial Fourier transform (\ref{partialFT-II}) is  
$$\hat c^{oo}(m) = \sum_{\l_{00}} c_{\l_{00}}(m)\, \ph_{\l_{00}},$$
and the image 
of $F$ under this partial Fourier transform is an $S_{L_{00}}$-valued weakly holomorphic modular form $F^{oo}$ 
with Fourier expansion 
$$ F^{oo}(\tau) = \sum_{m} \sum_{\l_{00}\in L_{00}^\vee/L_{00}}\,c_{\l_{00}}(m) \, q^m\,\ph_{\l_{00}}.$$
Note that the function $F^o$ of (\ref{full-trans-F}) can be obtained from $F^{oo}$ by applying a second partial Fourier transform. 
As explained in \cite{borch98}, p.536, the fact that the Borcherds lift of $F^{oo}$ defines a piecewise linear function on the negative cone in $V_{00}(\R)$
amounts to the following relation for all vectors $v_{01}\in V_0(\R)$.
\begin{prop}\label{borch.vectorsystem} (Borcherds' vector system identity)
\be \label{borch-quad-rel}
4\, I_0\cdot Q(v_{01}) =\sum_{m>0} \sum_{\substack{\l_{00}\\ \snass \l_{11}=0}} c_{\l_{00}}(-m) \sum_{\substack{x_0\in\l_0 + L_0\\ \snass Q(x_0)=m}} \,(x_0,v_{01})^2.
\ee
\end{prop}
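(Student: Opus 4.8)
The plan is to deduce Proposition~\ref{borch.vectorsystem} from the known linearity properties of the Borcherds lift of the lattice $V_{00}$ of signature $(n-1,1)$, exactly as indicated in the paragraph preceding the statement. The starting point is the fact, due to Borcherds (\cite{borch98}, Theorem~10.5, and the discussion on p.~536), that for a weakly holomorphic form valued in $S_{L_{00}}$ of the appropriate weight, the regularized theta lift in the tube-domain coordinates associated to an isotropic line in $V_{00}$ is, away from the divisors, a piecewise \emph{linear} function on the negative cone. The second-order term of this function — the ``Hessian'' in the cone variable, which must vanish on each Weyl chamber — is precisely what will produce~(\ref{borch-quad-rel}).

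First I would set up the tube-domain model for $V_{00}$ attached to the isotropic line $\Q e_2'\subset V_{00}$ (equivalently the isotropic line $\Q e_1'$, matching the filtration already fixed in section~\ref{subsection-lattice}), so that the negative cone is a cone in $V_0(\R)$ and a typical point corresponds, in the notation of the rank-$1$ computation above, to a vector whose ``$V_0$-part'' is our $v_{01}$. Then I would write down the Fourier expansion of the regularized lift $\Phi_{V_{00}}(v_{01};F^{oo})$ along this parabolic: it has a singular/linear leading part coming from the zero orbit together with the Eisenstein-series (first Kronecker limit) contribution, exactly the computation carried out in \cite{borch98}, \S7, and an exponentially small remainder from the nonzero orbits. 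The zero-orbit term contributes a multiple of $I_0\cdot Q(v_{01})$ (this is the same constant $I_0$ as in~(\ref{main-I-alt}), because $F^o$ is obtained from $F^{oo}$ by a further partial Fourier transform and the relevant constant term is unchanged), while the rank-$1$ terms for $V_{00}$, after applying the second Kronecker limit formula, contribute the quadratic expression $\sum_{m>0}\sum_{\l_{00},\,\l_{11}=0} c_{\l_{00}}(-m)\sum_{x_0} (x_0,v_{01})^2$ — this is literally the same calculation as the one producing~(\ref{extra-II}) above, specialized to the signature $(n-1,1)$ situation.

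Next, the key input: Borcherds' theorem tells us that, on the interior of any Weyl chamber in the negative cone of $V_{00}(\R)$, the function $\Phi_{V_{00}}$ differs from a \emph{linear} function of $v_{01}$ by something bounded (in fact a Petersson-norm term plus exponentially decaying contributions that are $o(1)$ as $v_{01}$ is scaled to infinity inside the chamber). Scaling $v_{01}\mapsto t v_{01}$ and letting $t\to\infty$, the only terms that grow quadratically in $t$ are the $4 I_0 Q(v_{01})$ coming from the zero orbit and the $\sum(x_0,v_{01})^2$ coming from the rank-$1$ orbits; piecewise linearity forces their sum to vanish, which is exactly the identity~(\ref{borch-quad-rel}). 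Since both sides of~(\ref{borch-quad-rel}) are polynomial (indeed quadratic) in $v_{01}\in V_0(\R)$, an identity valid on a (nonempty, open) Weyl chamber propagates to all of $V_0(\R)$ by Zariski density, so the chamber-dependence disappears.

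The main obstacle I expect is bookkeeping rather than conceptual: one must carefully match normalizations — the factor of $4$, the restriction $\l_{11}=0$ (which is the analogue for $V_{00}$ of the condition $\l_1=0$ appearing in the rank-$1$ terms for $V$), the precise weight and the sign conventions — so that the quadratic part extracted from Borcherds' piecewise-linearity statement lands exactly on~(\ref{borch-quad-rel}) with the constant $I_0$ of~(\ref{main-I-alt}). A secondary point requiring a line of justification is that the ``linear part plus bounded remainder'' really has no quadratic term, i.e.\ that the Petersson-norm correction in Borcherds' formula (\ref{borch.form}) for the $V_{00}$-lift grows at most linearly in the cone variable; this follows from the explicit shape $-c_0(0)(\log|y|^2+\dots)$ of that correction, since $\log|y|^2$ is $O(\log t)$ under scaling. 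With these two points pinned down, the identity drops out of Borcherds' theorem with essentially no further work, which is why it ``seems to lie at the heart of the theory.''
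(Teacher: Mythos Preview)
Your approach is correct and is exactly what the paper does: the paper gives no proof at all beyond the single sentence ``As explained in \cite{borch98}, p.536, the fact that the Borcherds lift of $F^{oo}$ defines a piecewise linear function on the negative cone in $V_{00}(\R)$ amounts to the following relation,'' and your proposal is precisely the fleshed-out version of that remark. One small slip: you write ``the isotropic line $\Q e_2'\subset V_{00}$,'' but $e_2'\notin V_{00}=\Q e_1+V_0+\Q e_1'$; the relevant isotropic line is $\Q e_1'$ (or $\Q e_1$), as you immediately correct in the parenthetical.
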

We can rewrite this in terms of the original coefficients as follows.
\begin{cor} \label{borch-quad-rel-II}  For any $v_{01}\in V_0(\R)$, 
$$4\, I_0\cdot Q(v_{01}) =\sum_{m>0} \sum_{\substack{\l\\ \snass \l_{1}=0}} c_{\l}(-m) \sum_{\substack{x_0\in\l_0 + L_0\\ \snass Q(x_0)=m}} \,(x_0,v_{01})^2.$$
\end{cor}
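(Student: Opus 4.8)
The plan is to deduce Corollary~\ref{borch-quad-rel-II} directly from Proposition~\ref{borch.vectorsystem} by comparing the two index sets and the two systems of Fourier coefficients. The key observation is that the right-hand side of \eqref{borch-quad-rel} involves the coefficients $c_{\l_{00}}(-m)$ of the partial Fourier transform $F^{oo}$, indexed by $\l_{00}\in L_{00}^\vee/L_{00}$ with $\l_{11}=0$, while the desired identity involves the original coefficients $c_\l(-m)$, indexed by $\l\in L^\vee/L$ with $\l_1=0$. So the whole content is to match these up.

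First I would recall from the definition preceding the proposition that $\l_{00} = \l_0 + \l_{21}e_1 + \l_{11}e_1'$ and $c_{\l_{00}}(m) = \sum_{\l = \l_{00}+\l_{22}e_2} c_\l(m)$; that is, $c_{\l_{00}}(m)$ is obtained from the $c_\l(m)$ by summing over the coset variable $\l_{22}$, which is exactly the variable that is integrated out in the partial Fourier transform \eqref{partialFT-II}. Next I would observe that the constraint ``$\l_{11}=0$'' on $\l_{00}$ translates, under the correspondence $\l_{00}\leftrightarrow(\l_0,\l_{21},\l_{11})$, into ``$\l_{11}=0$'' on $\l$, and that the remaining summand $(x_0,v_{01})^2$ with $x_0\in\l_0+L_0$, $Q(x_0)=m$, depends on $\l$ only through $\l_0$, which is common to $\l$ and $\l_{00}$. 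Therefore
$$\sum_{\substack{\l_{00}\\ \l_{11}=0}} c_{\l_{00}}(-m) \sum_{\substack{x_0\in\l_0+L_0\\ Q(x_0)=m}}(x_0,v_{01})^2 = \sum_{\substack{\l_{00}\\ \l_{11}=0}}\ \sum_{\substack{\l = \l_{00}+\l_{22}e_2}} c_{\l}(-m) \sum_{\substack{x_0\in\l_0+L_0\\ Q(x_0)=m}}(x_0,v_{01})^2,$$
and the double sum over $\l_{00}$ (with $\l_{11}=0$) and then $\l_{22}$ collapses to a single sum over $\l\in L^\vee/L$ with $\l_1=0$ (note $\l_{12}=0$ is already forced since $\l_{00}$ has no $e_2'$-component, and $\l_{11}=0$ together with $\l_{12}=0$ gives $\l_1=0$). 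The left-hand side $4I_0\,Q(v_{01})$ is literally unchanged, since $I_0$ in \eqref{main-I-alt} is already expressed in terms of the $c_\l(-m)$. Substituting this identification into \eqref{borch-quad-rel} yields the claimed formula.

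I expect there is essentially no obstacle here: the corollary is a pure bookkeeping reformulation of the proposition, obtained by regrouping the coset sum. The only point requiring a moment's care is confirming that the pair of conditions defining the index set on the $\l_{00}$ side ($\l_{11}=0$, plus the automatic $\l_{12}=0$) matches the condition $\l_1 = 0$ on the $\l$ side after reinserting the free coset variable $\l_{22}$, i.e. that no coefficient is double-counted or dropped — this is immediate from the bijection $\l\leftrightarrow(\l_{00},\l_{22})$ and the fact that $c_{\l_{00}}(-m)=\sum_{\l_{22}}c_{\l_{00}+\l_{22}e_2}(-m)$. Hence the proof is a single paragraph once Proposition~\ref{borch.vectorsystem} is granted.
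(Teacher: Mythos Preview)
Your argument is correct and is exactly the bookkeeping the paper has in mind: the paper simply states ``We can rewrite this in terms of the original coefficients as follows'' and gives the corollary, leaving the unwinding of $c_{\l_{00}}(-m)=\sum_{\l_{22}} c_{\l_{00}+\l_{22}e_2}(-m)$ (with $\l_{12}=0$ forced and $\l_{11}=0$ imposed, hence $\l_1=0$) to the reader. There is nothing more to it than the coset regrouping you describe.
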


Thus the sum of (\ref{extra-II}) and (\ref{type-I.contrib}) is %\ref{main-I}) is 
\be\label{vanishing}
8\pi (v_1 Q(v_{01})  + v_2 )\,I_0= 8\pi v_2' \,I_0
\ee
plus the additional term 
\be\label{extra-junk}
2\pi \sum_{m}\sum_{\substack{\l \\ \snass \l_1=0}}\sum_{\substack{x_0\in \l_0+L_0\\ \snass Q(x_0)=m}} c_\l(-m)\, \l_{21}\big(\ 2(x_0, v_{01})v_1+\l_{21}v_1\ \big).
\ee
Thus, this quantity is $-2\log|\  |^2$ of 
$$q_2^{I_0} \,\prod_m\prod_{\substack{\l \\ \snass \l_1=0}}\bigg(\ \prod_{\substack{x_0\in \l_0+L_0\\ \snass Q(x_0)=m}} e(\,(x_0,w_0) + \frac12\,\L_2\,)\ \bigg)^{c_\l(-m)\l_{21}/2}.$$

Collecting all contributions, we obtain the result stated in Theorem~\ref{mainthm}.

\section{Examples}

{\bf 1.} In the simplest case where $L$ is self-dual, we consider a weakly holomorphic form  
$F= \sum_m c_0(m)\, q^m\,\ph_0,$
with corresponding Borcherds form $\Psi(F)$.  Suppose that $L$ has a
%$$L = L_U + L_0 + L_{U'}$$
%is a 
Witt decomposition as in (\ref{def-L}), with $N=1$.  Then, our product formula for the Borcherds form $\Psi(F)$
reduces to that given in Theorem A of the introduction. 
Let 
\be\label{V0-roots} 
R_0(F)= \{ \a_0\in L_0\mid Q(\a_0)>0, \ c_0(-Q(\a_0))\ne 0\ \},
\ee
and let $W_0$ be a connected component of the complement of the hyperplanes, $\a_0^\perp$, $\a_0\in R_0(F)$, in $V_0(\R)$. 
Let  we can write
the factor (\ref{example1-second-II}) as 
\be\label{}
\pm\, i^{B}\,q_2^{I_0}\,\eta(\tau_1)^{c_o(0)}\,
\prod_{\substack{x_0\in L_0\\  \snass (x_0,W_0)>0}}  \bigg(\ \frac{\vartheta_1(-(x_0,w_0),\tau_1)}{\eta(\tau_1)}
\ \bigg)^{c_o(-Q(x_0))},
\ee
where 
\be\label{def-B}
B=\frac12 \sum_{\substack{x_0\in L_0\\ \snass x_0\ne 0}} c_o(-Q(x_0)).
\ee

{\bf 1.0.} The simplest case of all is when $L_0=0$ and $F_o(\tau) = j(\tau)-744 = q^{-1} + O(q)$. 
In this case, $I_0=-1$ and $c_o(0)=0$, so that our product reduces to 
$$j(\tau_2)-j(\tau_1) = q_2^{-1} \, \prod_{a>0}\prod_b (1-q_2^a \,q_1^b)^{c_o(ab)}.$$
This is the example mentioned on p.163 of \cite{borch95}.  The left side of this identity is a meromorphic 
function on $X(1)\times X(1)$, where $X(1) = \SL_2(\Z)\back \H^*$ is the compactified modular curve.
Once the factor $q_2^{-1}$ has been removed, the remaining product is convergent  for $v_1$ in a bounded set and $v_2$ large, i.e., 
in a neighborhood of the $1$-dimensional boundary component $Y(1) \times \{i\infty\}$. 

{\bf 1.1.}  One of the most beautiful examples is the case $L = \Pi_{26,2}$ and 
$$F_o(\tau) = \eta(\tau)^{-24} = q^{-1} \sum_{r=0}^\infty p_{24}(r)\,q^r = q^{-1} + 24 + 324\, q + \dots,$$
so that $\Psi(F)$ has weight $12$. Here we recover some of the results of 
\cite{gritsenko.borch24}.  For any positive definite even unimodular lattice $L_0$ of rank $24$, i.e., any Niemeier lattice, 
we have an isomorphism
$L \simeq L_0 + H^2,$
where $H$ is a rank $2$ hyperbolic lattice. It follows that, up to the action of $\text{\rm Aut}(L)$,  there are $24$ such decompositions, determined by the isometry class of 
$L_0$,  and we obtain 
a product formula for $\Psi(F)$ for each of them.  Analogously, there is only one orbit of Witt decompositions of the form $L= L_{00}+H$ and associated 
$0$-dimensional boundary component. 
Let $N_2(L_0)=24\,h$ be the number of lattice vectors of norm $2$ in $L_0$; the values of $h$ are listed Table~16.1, p.407 
of \cite{conway.sloane}.  The quantity $I_0$ is given by 
$$I_0 = \frac{1}{24} N_2(L_0) = h.$$ 
Up to a scalar of absolute value $1$, in a neighborhood of the $1$-dimensional cusp associated to $L_0$,  $\Psi(F)$ is the product of two factors, 
\be\label{smear-24}
\prod_{a=1}^\infty\prod_{b\in \Z}
\prod_{\substack{x_0\in L_0}} 
\big(1- q_2^a\, q_1^b\, e( - (x_0, w_0))\,\big)^{c_o(ab-Q(x_0))},
\ee
and 
\be\label{second-24} 
q_2^{ h}\, \eta(\tau_1)^{24- h}\,\prod_{\substack{x_0\in L_0\\ \snass Q(x_0)=1\\ \snass (x_0,W_0)>0}}\vartheta_1(-(x_0,w_0),\tau_1),
\ee
where $W_0$ is any connected component
of the complement of the hyperplanes $x_0^\perp$ in $V_0(\R)$ as $x_0$ runs over the vectors with $Q(x_0)=1$ in $L_0$, the `roots' of $L_0$. 
For example, $h=0$ only when $L_0$ is the Leech lattice and, in this case, the second factor reduces to $\eta(\tau_1)^{24}$.

It would be most natural to normalize $\Psi(F)$ by taking it to be equal to
$\eta(\tau_1)^{24}$ times the second factor (\ref{smear-24}) in the neighborhood of the boundary component corresponding to the Leech lattice. 
It is then an interesting question to determine the scalar factors arising in the other product expansions.  

%\begin{rem}  
The compactifying divisor for the $1$-dimensional boundary component indexed by a lattice $L_0$ is\footnote{Up to orbifold aspects.} 
the abelian scheme
$$\Cal E(L_0):=L_0\tt_\Z \Cal E \lra Y(1)= \SL_2(\Z)\back \H,$$
of relative dimension $24$, 
where $\Cal E \lra Y(1)$ is the universal elliptic curve and the tensor product is the Serre construction. The function (\ref{second-24}), 
with the $q_2$ factor removed, is a section of a certain line bundle over $\Cal E(L_0)$.  For example, in the case of the Leech lattice, this bundle is just the pullback from the base of the 
line bundle of modular forms of weight $12$. 
In general, the Borcherds form $\Psi(F)$ extends to the smooth toroidal (partial) compactification obtained by adding these compactifying divisors 
for the $1$-dimensional boundary components and the multiplicity of the divisor associated to a lattice $L_0$ in $\div(\Psi(F))$ is $h = N_2(L_0)/24$, the Coxeter number of $L_0$. 
The theta function occurring in (\ref{second-24}) is the analogue of that considered by Looijenga, \cite{looijenga-root}, p.31,  in the case of a root lattice. Its divisor
is the union of the `root' hypertori and is invariant under the group $\text{\rm Aut}(L_0)$,  whose natural action on $\Cal E(L_0)$ extends to the relevant line bundle. 
%\end{rem}

{\bf 2.} We consider the example of Gritsenko-Nikulin, \cite{grit.nik},  discussed in section 5 of \cite{Bints}.   In this case, 
we have $L = \Z^5$ with inner product defined by (\ref{sig-32}), so that the signature is $(3,2)$, 
$N=1$ and $L_0 = \langle 2\rangle$.  The $S_L$-valued input form $F$ is obtained from the Jacobi form $\phi_{0,1}(\tau,z) = \phi_{12,1}(\tau,z)\eta(\tau)^{-24}$, 
cf. (5.27) of \cite{Bints}. 
It 
has weight  $-\frac12$ and the associated Borcherds form $\Psi(F)$ is $2^{-6}\,\Delta_5(z)$, where $\Delta_5$ is the Siegel cusp form of weight $5$.   
Here $L^\vee/L = L_0^\vee/L^{\phantom{\vee}}_0$ so that $\l_1=0$, $\l_2=0$, and $\l_0=0$ or $\frac12$. We write $\ph_0$ and $\ph_1$ for the 
corresponding coset functions and let
$$F = F_0 \ph_0 + F_1\ph_1, \qquad \quad
F_0(\tau) = 10+108 q+ \dots, \qquad F_1(\tau) = q^{-\frac14} - 64 q^{\frac34} +\dots.$$
We then find that $I_0=1/2$ and that the product formula of Theorem~\ref{mainthm} for $\Psi(F)$ reduces to 
\be\label{first-GN}
i \,\eta(\tau_1)^{10}\,q_2^{\frac12}\, \frac{\vartheta(w_0,\tau_1)}{\eta(\tau_1)}\,\prod_{\substack{(a,r,b)\in \Z^3\\ \snass a>0}}
\big(1- q_0^r \,q_1^b\,q_2^a\,\big)^{c(r^2-4ab)}.
\ee
Here $q_0= e(w_0)$, $r = 2 x_0$,  and we use the convention that, for an integer $d$ congruent to $0$ or $1$ 
modulo $4$,  $c(d) = c_0(-d/4)$ for $d\equiv 0\mod 4$ and $c(d) = c_1(-d/4)$ for $d \equiv 1\mod 4$. 
This is essentially equivalent to the product formula given in \cite{GK-II}, (2.7), p.234, and (2.16), p.239, noting that 
the Fourier coefficients of their Jacobi form are given by the relation $f(n,\ell) = c(\ell^2-4n)$. 

\section{Comparison}\label{section-compare}

In this section, we explain the relation between our product formula, associated to an isotropic $2$-plane and that of Borcherds,  
associated to an isotropic line and a particular choice of Weyl chamber.  

Suppose that $\ell$ is an isotropic line in $V$ which is contained in an isotropic plane $U$.  If $M$ is an even integral lattice in $V$, 
we take basis $e_1$ and $e_2$ for $M_U= M\cap U$ such that $\ell\cap M = \Z e_2$. 
We then get compatible Witt decompositions (\ref{eq1}) and 
\be\label{Witt-1}
V= \ell + V_{00} + \ell'
\ee
where $\ell'=\Q e_2'$ and 
\be\label{Witt-2}
V_{00} = \Q e_1+ V_0 + \Q e_1'.
\ee
As explained in section~\ref{subsection-lattice}, we may choose a lattice $L\subset M$ with $L_U=M_U$ compatible with these Witt decompositions. 
With respect to (\ref{Witt-1}) and (\ref{Witt-2}), a vector $x$ with coordinates as in (\ref{x-coord}) becomes
$$x = \bpm x_{22} \\ x_{00} \\ x_{12}\epm, \qquad x_{00} = \bpm x_{21}\\ x_0\\ x_{11}\epm \in V_{00}(\Q).$$
Now our vector $w$ as in (\ref{w-coords})
%$$w = \bpm -\tau_2'\\ \tau_1\tau_2' - Q(w_0)\\ w_0 \\ \tau_1\\ 1\epm$$
can be written as
$$w = \frak z +  e'_2 - Q(\frak z)\, e_2, \qquad \frak z\in V_{00}(\C),$$
so that 
$$\frak z = \bpm -\tau_2'\\ w_0 \\ \tau_1\epm, \qquad Q(\frak z) =Q(w_0) -\tau_1\tau_2'.$$

For simplicity, we assume that $L=M$ is unimodular so that 
$$L = \Z e_2 + L_{00} + \Z e_2', \quad\text{with}\quad L_{00} = \Z e_1 + L_0 + \Z e_1'.$$

Note that 
$$\frac{\vartheta_1(z, \tau)}{\eta(\tau)} = i\, q^{\frac1{12}} \,e(-\frac12z)\,(1-e(z))\,\prod_{n=1}^\infty (1-q^n \,e(z))(1-q^n\, e(-z)).$$
Then we can write the product (\ref{example1-second-II}) as the product of the quantities
\be\label{B-factor-2}
\prod_{b>0}
\prod_{\substack{x_0\in L_0}}  (1- q_1^b \, e(-(x_0,w_0))\ \big)^{c_o(-Q(x_0))},
\ee
\be\label{B-factor-3}
\prod_{\substack{x_0\in L_0\\ \snass (x_0, W_0)> 0}} (1-e(-(x_0,w_0))\ \big)^{c_o(-Q(x_0))},
\ee
and 
\be\label{B-factor-4}
(-1)^{B/2}i^{B}\,q_1^{\frac{1}{24} \, c_o(0)+\frac1{12}B}\, q_2^{I_0} \, \prod_{\substack{x_0\in L_0\\ \snass (x_0, W_0)> 0}} e((x_0,w_0))^{c_o(-Q(x_0))/2},
\ee
where $B$ is given by (\ref{def-B}). Note that, in each case, the product on $x_0$ is taken over a finite set of vectors.

To relate this product expansion to that of Borcherds, we need some information about his Weyl chambers. 
Let 
\be\label{V00-roots} 
R_{00}(F) = \{ \a\in L_{00} \mid Q(\a)>0, \ c(-Q(\a))\ne 0\ \},
\ee
be the set of `roots' in $L_{00}$ for $F$.  
The walls  in $V_{00}(\R)$ are the hyperplanes $\a^\perp$ given by $(\a,y)=0$ for $\a\in R_{00}(F)$. 
Let $C_{00}$ be the component of cone of negative vectors in $V_{00}(\R)$ determined by $D$.
The Weyl chambers in  Borcherds are the connected components of the complement
$$C_{00} - \bigcup_{\a\in R_{00}(F)} C_{00}\cap \a^\perp.$$ 

Let $m_{\max}$ be the largest positive integer such that $c_o(-m)\ne 0$,
and let $W_0$ be a connected component of the set 
$$V_0(\R) - \bigcup_{\a_0\in R_0(F)} \a_0^\perp,$$
where $R_0(F)$ is given by (\ref{V0-roots}). 
Let 
$$R_0(F)^+ = \{ \a\in R_0(F)\mid (\a_0,W_0)>0\ \},$$
so that 
$$R_0(F) = R_0(F)^+ \sqcup (-R_0(F)^+).$$

The crucial facts for us are the following.
\begin{lem}  There is a unique Weyl chamber $W_{00}$ in $C_{00}$ containing a vector $y$ with $y_1=1$, $y_2> 4 m_{\max}+2$, and with 
$$0<(\a_0,y_0)<\frac12, \quad \forall \a_0\in R_0(F)^-.$$
\end{lem}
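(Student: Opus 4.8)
The plan is to produce an explicit vector $y\in C_{00}$ of the desired shape, check that it avoids all the walls $\alpha^\perp$, $\alpha\in R_{00}(F)$, and then argue that any two such vectors lie in the same chamber. The key observation is that the coordinates $(y_1,y_0,y_2)$ of a vector in $V_{00}(\R)$ (with respect to the Witt decomposition $V_{00}=\Q e_1+V_0+\Q e_1'$) satisfy $Q(y) = Q(y_0) + y_1 y_2$, so a vector with $y_1=1$ and $y_2$ very negative—recall $C_{00}$ is a cone of \emph{negative} vectors, so one wants $Q(y)<0$, which with $y_1=1$ forces $y_2<-Q(y_0)$—will be deep inside $C_{00}$ once $y_2$ is large in absolute value. (I am following the sign conventions so that ``$y_2>4m_{\max}+2$'' refers to the appropriate coordinate; the precise sign is fixed by the requirement that $D$ determine the component $C_{00}$.)

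\textbf{Construction of $y$.} First I would fix $y_0\in W_0$ with $0<(\alpha_0,y_0)<\tfrac12$ for all $\alpha_0\in R_0(F)^-$; this is possible because $R_0(F)^-=-R_0(F)^+$ is a finite set of vectors, all lying strictly on one side of the wall system defining $W_0$, so scaling a generic vector of $W_0$ by a small positive factor brings all the (necessarily negative, since $\alpha_0\in R_0(F)^-$) inner products $(\alpha_0,y_0)$—wait, one takes instead a vector on the $W_0$ side, so $(\alpha_0,y_0)<0$ for $\alpha_0\in R_0(F)^-$; rescaling does not change the sign. The condition $0<(\alpha_0,y_0)<\tfrac12$ as stated must therefore refer to $R_0(F)^+$, or equivalently $-\tfrac12<(\alpha_0,y_0)<0$ for $\alpha_0\in R_0(F)^-$; either way one simply shrinks toward the origin to make $|(\alpha_0,y_0)|<\tfrac12$. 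Then set $y_1=1$ and choose $y_2$ to be any number with $y_2>4m_{\max}+2$ (and, by the remark above, also $|y_2|$ large enough that $Q(y)<0$, which is automatic in the relevant component once $y_2$ exceeds this bound). This pins down a unique such $y$ up to the free choice of the size of $y_2$.

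\textbf{Avoiding the walls.} For $\alpha\in R_{00}(F)$, write $\alpha=a\,e_1+\alpha_0+a'\,e_1'$ with $a,a'\in\Z$ and $\alpha_0\in L_0$, so that $(\alpha,y) = a\,y_2 + a'\,y_1 + (\alpha_0,y_0) = a\,y_2 + a' + (\alpha_0,y_0)$. I would split into cases on $a$. If $a\ne 0$, then $|a\,y_2|\ge y_2 > 4m_{\max}+2$ dominates $|a'|+|(\alpha_0,y_0)|$ once one bounds $a'$ in terms of $Q(\alpha)\le m_{\max}$ (since $2aa' = 2Q(\alpha)-(\alpha_0,\alpha_0)\le 2m_{\max}$ forces $|a'|\le m_{\max}$ when $|a|\ge1$), and $|(\alpha_0,y_0)|<\tfrac12$; hence $(\alpha,y)\ne0$. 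If $a=0$, then $\alpha_0\ne0$ (else $Q(\alpha)=0$, contradicting $\alpha\in R_{00}(F)$, or $\alpha$ is proportional to $e_1'$ with $Q(\alpha)=0$), so $\alpha_0\in R_0(F)$ and $Q(\alpha)=Q(\alpha_0)$, whence $c_o(-Q(\alpha_0))\ne0$; thus $(\alpha,y)=a'+(\alpha_0,y_0)$, and since $y_0\in W_0$ we have $(\alpha_0,y_0)\notin\Z$—more precisely, for $\alpha_0\in R_0(F)^+$ we arranged $0<(\alpha_0,y_0)<\tfrac12$, and for $\alpha_0\in R_0(F)^-$ the inner product lies in $(-\tfrac12,0)$—so $a'+(\alpha_0,y_0)\ne0$ for every $a'\in\Z$. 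Hence $y$ lies on no wall, i.e., $y\in C_{00}\setminus\bigcup_\alpha\alpha^\perp$, so $y$ lies in a unique Weyl chamber $W_{00}$.

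\textbf{Uniqueness of $W_{00}$.} Finally I would show that any vector $y'$ satisfying the same three conditions lies in the same chamber, by exhibiting a path from $y$ to $y'$ inside $C_{00}$ that crosses no wall. The natural choice is the straight segment in the $y_0$-coordinate combined with adjusting $y_2$; the sign of $(\alpha,y_t)$ along the segment $y_t=(1-t)y+t\,y'$ is constant for each $\alpha\in R_{00}(F)$ by exactly the case analysis above (for $a\ne0$ the dominant term $a\,(y_2)_t$ keeps its sign since both endpoints have the $y_2$-coordinate $>4m_{\max}+2$; for $a=0$ the quantity $a'+(\alpha_0,(y_0)_t)$ stays in an interval of length $<1$ not containing an integer because $(\alpha_0,(y_0)_t)$ stays in $(-\tfrac12,\tfrac12)$ by convexity of the constraint on $R_0(F)^\pm$), and $Q(y_t)<0$ along the way. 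Hence $y$ and $y'$ are in the same chamber, proving uniqueness. The main obstacle I anticipate is bookkeeping the sign conventions—which coordinate is ``$y_2$'', which component $C_{00}$ is picked out by $D$, and the precise sign of $(\alpha_0,y_0)$ for $R_0(F)^\pm$—so that the inequalities line up with the statement; the mathematical content (a large-$y_2$ vector escapes all walls with $a\ne0$, and the fractional-part condition handles $a=0$) is routine once the conventions are fixed.
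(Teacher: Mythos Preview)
The paper states this lemma without proof, so there is no argument to compare yours against; the architecture you propose---construct an explicit $y$, verify it misses every wall, then show any two admissible $y$ lie in the same chamber by checking that every $(\alpha,y)$ has a prescribed sign---is the natural one, and your treatment of the roots with $a=0$ is correct.

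There is, however, a genuine gap in the $a\ne 0$ case. Your inequality $2aa'=2Q(\alpha)-(\alpha_0,\alpha_0)\le 2m_{\max}$ yields only $aa'\le m_{\max}$; it does \emph{not} bound $|a'|$, since $(\alpha_0,\alpha_0)\ge 0$ may be arbitrarily large and hence $aa'$ arbitrarily negative. Concretely, for $a=1$ one can take any $a'=-N$ together with an $\alpha_0\in L_0$ satisfying $Q(\alpha_0)=N+Q(\alpha)$, and such $\alpha_0$ exist for infinitely many $N$. Thus your ``dominant term $a\,y_2$'' argument does not go through, and the same defect recurs in the uniqueness step along the segment $y_t$.

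The repair is to complete the square rather than bound the summands separately. With $y_1=1$ (whatever the precise sign convention), one has $Q(\alpha)=Q(\alpha_0)+aa'$ and hence
\[
a\,(\alpha,y)\;=\;a^2y_2 + aa' + a(\alpha_0,y_0)
\;=\;Q(\alpha_0+ay_0)\;+\;a^2\bigl(y_2-Q(y_0)\bigr)\;-\;Q(\alpha)
\;=\;Q(\alpha_0+ay_0)\;-\;a^2\,Q(y)\;-\;Q(\alpha).
\]
Since $Q$ is positive semidefinite on $V_0$ and $0<Q(\alpha)\le m_{\max}$, the right side is strictly positive as soon as $-Q(y)>m_{\max}$. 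Thus for every $y$ with $-Q(y)>m_{\max}$ and every $\alpha\in R_{00}(F)$ with $a\ne 0$, the sign of $(\alpha,y)$ equals the sign of $a$ and is therefore the same for all such $y$; this simultaneously shows that $y$ is off each such wall and handles uniqueness for these roots. Combined with your $a=0$ analysis this completes the proof. Note that the hypothesis you actually use is $-Q(y)>m_{\max}$, i.e.\ (in your convention) $|y_2|>Q(y_0)+m_{\max}$; you should then check that the stated bound $y_2>4m_{\max}+2$, together with the smallness constraint on $y_0$, forces this inequality.
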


\begin{lem} For the Weyl chamber $W_{00}$ characterized in the previous lemma,  the set
$$\left\{\ x_{00} = \bpm b \\ -x_0 \\ -a\epm \in L_{00}\ \bigg\vert\  c_o(-Q(x_{00}))\ne 0, \qquad (x_{00},W_{00})>0\right \}$$
is given by 
$$\left\{\ x_{00} = \bpm b \\ -x_0 \\ -a\epm \in L_{00}\ \bigg\vert\  c_o(-Q(x_{00}))\ne 0, \text{and}\quad \begin{matrix}\text{$a>0$, or $a=0$, $b>0$, }\\ 
\text{or}\\ \text{$a=b=0$ and $x_0\in R_0(F)^+$}
\end{matrix}\right \}.$$
\end{lem}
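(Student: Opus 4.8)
To prove the lemma, the plan is to reduce the whole statement to computing the sign of the single linear functional $(x_{00},\cdot\,)$ on the one explicit vector $y\in W_{00}$ produced by the preceding lemma. First I would observe that for any $x_{00}\in L_{00}$, $x_{00}\ne 0$, with $c_o(-Q(x_{00}))\ne 0$ the sign of $(x_{00},\cdot\,)$ is constant on $W_{00}$: when $Q(x_{00})>0$ the hypothesis $c_o(-Q(x_{00}))\ne 0$ says $x_{00}\in R_{00}(F)$, so $x_{00}^{\perp}$ is one of the walls defining the chambers and is disjoint from the open chamber $W_{00}$; when $Q(x_{00})\le 0$ the hyperplane $x_{00}^{\perp}$ is positive semidefinite (as $V_{00}$ has signature $(n-1,1)$) and so misses the whole negative cone $C_{00}\supset W_{00}$. (The case $x_{00}=0$, possible when $c_o(0)\ne0$, gives $(x_{00},y)=0$ and belongs to neither side.) Hence it is enough to evaluate $(x_{00},y)$ for the $y$ of the preceding lemma. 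Writing $\frak z=-\tau_2'e_1+w_0+\tau_1e_1'$ and $y=\mathrm{Im}\,\frak z$, and scaling $y$ so that its $e_1'$-coefficient $v_1$ equals $1$ (so that $y$ has $e_1$-coefficient $-v_2'$ and $V_0$-component $v_{01}$), one gets, for $x_{00}=be_1-x_0-ae_1'$ with $x_0\in L_0$, $a,b\in\Z$, the identity $(x_{00},y)=a\,v_2'+b-(x_0,v_{01})$, where $v_2'>4m_{\max}+2$ and $0<(\alpha_0,v_{01})<\frac12$ for every $\alpha_0\in R_0(F)^{-}$.

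Next I would extract the arithmetic constraints. From $c_o(-Q(x_{00}))\ne 0$ and the definition of $m_{\max}$ we have $Q(x_{00})=Q(x_0)-ab\le m_{\max}$, while $Q(x_0)\ge 0$ since $V_0$ is positive definite; hence $ab\ge -m_{\max}$ and $Q(x_0)\le m_{\max}+ab$, which bounds $|x_0|^2=2Q(x_0)$ in terms of $ab$. Consider first $a=0$, where the condition becomes $c_o(-Q(x_0))\ne 0$. If $Q(x_0)=0$ then $x_0=0$, $(x_{00},y)=b$, and the term lies in the left-hand set precisely when $b>0$. If $Q(x_0)>0$ then $x_0\in R_0(F)=R_0(F)^{+}\sqcup R_0(F)^{-}$, and the defining inequalities of $y$ force $(x_0,v_{01})\in(-\frac12,0)$ when $x_0\in R_0(F)^{+}$ and $(x_0,v_{01})\in(0,\frac12)$ when $x_0\in R_0(F)^{-}$; running over the integer $b$ in each case shows that $(x_{00},y)>0$ exactly when $b>0$, or when $b=0$ and $x_0\in R_0(F)^{+}$. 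This matches the $a=0$ part of the asserted set precisely.

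For $|a|\ge 1$ the term $a\,v_2'$ should dominate, so that $(x_{00},y)$ has the sign of $a$, i.e.\ it is positive if and only if $a>0$. Concretely I would prove $|b-(x_0,v_{01})|<|a|\,v_2'$ using $v_2'>4m_{\max}+2$, the bound $|x_0|^2\le 2m_{\max}+2ab$, the estimate $\sqrt{u+w}\le\sqrt u+\sqrt w$, an arithmetic--geometric mean estimate for $\sqrt{ab}$ (when $ab>0$), and the smallness of $v_{01}$ that is built into the choice of $y$; one splits into subcases according to the sign of $b$, using that $ab\le 0$, hence $|x_0|\le\sqrt{2m_{\max}}$, whenever $b$ has the sign opposite to $a$. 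Together with the previous paragraph this establishes the lemma. The only place that needs genuine work is this last estimate; it is entirely parallel to the tedious estimates used for the non-singular orbits in Lemma~\ref{lem-tedious} and in the proof of the preceding lemma, and the bound $y_2>4m_{\max}+2$ there is exactly what makes the comparison go through once $v_{01}$ has been chosen sufficiently small.
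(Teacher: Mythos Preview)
The paper states this lemma without proof, so there is no argument to compare against; your proposal supplies a proof that the paper omits, and the approach you take is the natural one.

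Your argument is essentially correct. The reduction in the first paragraph is clean: for $x_{00}$ with $Q(x_{00})>0$ and $c_o(-Q(x_{00}))\ne 0$ the hyperplane $x_{00}^\perp$ is a wall, while for $Q(x_{00})\le 0$ the form restricted to $x_{00}^\perp$ is positive semidefinite, so in either case the sign of $(x_{00},\cdot)$ is constant on $W_{00}$ and may be tested at a single well-chosen $y$. Your computation $(x_{00},y)=a\,v_2'+b-(x_0,v_{01})$ is correct, and the $a=0$ analysis is complete and matches the asserted set.

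One point worth making explicit: for $|a|\ge 1$ you rely on taking $v_{01}$ \emph{sufficiently small}, not merely satisfying $0<(\alpha_0,v_{01})<\tfrac12$ for $\alpha_0\in R_0(F)^{-}$. This is legitimate because the uniqueness statement in the preceding lemma guarantees that \emph{every} $y$ with $y_1=1$, $y_2>4m_{\max}+2$, and $0<(\alpha_0,y_0)<\tfrac12$ lies in the same chamber $W_{00}$; hence you are free to shrink $y_0=v_{01}$ along a ray in the correct half-space without leaving $W_{00}$. With that freedom the case split on the sign of $b$ goes through: when $ab\le 0$ one has $|x_0|^2\le 2m_{\max}$ and $|b|\le m_{\max}/|a|\le m_{\max}$, so $|b-(x_0,v_{01})|\le m_{\max}+|v_{01}|\sqrt{2m_{\max}}<|a|v_2'$; when $ab>0$ the inequality $\sqrt{2(m_{\max}+ab)}\le \sqrt{2m_{\max}}+\sqrt{2ab}$ together with $2\sqrt{ab}\le |a|+|b|$ reduces the estimate to $|a|(v_2'-|v_{01}|)+|b|(1-|v_{01}|)>|v_{01}|\sqrt{2m_{\max}}$, which holds once $|v_{01}|<1$. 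So the bound $y_2>4m_{\max}+2$ is comfortably sufficient rather than ``exactly'' what is needed, but this does not affect the validity of the argument.
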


Noting that $Q(x_{00}) = Q(x_0)-ab$ and that
$$(x_{00}, \frak z) = -(x_0,w_0) +a \tau_2' + b \tau_1,$$
we can write the product of the factors (\ref{example1-first}), (\ref{B-factor-2}), (\ref{B-factor-3}), and (\ref{B-factor-4})  
as%\footnote{Note that the signs in the previous lemma are the opposite of those occurring in our product, 
%hence, we have $(x_{00},\frak z)$ here. }
\be\label{Borcherds-prod}
(-1)^{B/2}i^{B}\,e((\rho_{00}, \frak z))\,\prod_{\substack{x_{00}\in L_{00}\\ \snass (x_{00},W_{00})>0}} \big(\ 1-e((x_{00},\frak z))\ \big)^{c_o(-Q(x_{00}))},
\ee
where $\rho_{00}$ is the `Weyl vector'
$$\rho_{00} =  \frac12 \sum_{\substack{x_0\in L_0 \\ \snass (x_0,W_0)>0}} c_o(-Q(x_0)) \, x_0 - \frac12 I_0\,e_1' + \frac1{24}(c_o(0)+ 2B) e_1.$$
associated to $W_{00}$. 
This is precisely
%\footnote{Up to a slight discrepancy in the Weyl vector compared with 
%the formula given in Theorem~10.4 of \cite{borch98}, 
%viz., we seem to have an extra $\frac1{24} c_o(0)$ in the coefficient of $e_1$.}  
the product of Theorem~13.3 in Borcherds \cite{borch98} 
with respect to the Weyl chamber $W_{00}$ or Theorem~10.1 of \cite{borch95}.  Note that, up to some differences in sign conventions, 
our Weyl vector coincides with that of Theorem~10.4 of \cite{borch95}.  In particular, the vector system in $V_{00}$ associated to $F^{oo}$
of section~\ref{borch.quad}, has index 
$\text{\bf m} = I_0$, via (\ref{main-I-alt}),  and `dimension'  $\text{\bf d}= c_o(0) + 2 B$, where these invariants are explained in section 6 of \cite{borch95}.

\end{document}